\documentclass[11pt,a4paper,reqno]{amsart}
\usepackage[english]{babel}
\pdfoutput=1
\usepackage[applemac]{inputenc}
\usepackage[T1]{fontenc}
\usepackage{palatino}
\usepackage{amsmath}
\usepackage{amssymb}
\usepackage{amsthm}
\usepackage{enumitem}
\usepackage{amsfonts}
\usepackage{esint}
\usepackage{graphicx}
\usepackage{xcolor}
\usepackage{mathtools}
\usepackage{comment}

\usepackage[colorlinks = true, citecolor = black]{hyperref}
\pagestyle{headings}
\title[Geometric lemmas and uniform rectifiability: Part 1]{On various Carleson-type geometric lemmas\\ and uniform rectifiability in metric spaces: Part 1}

\author[K. F\"assler]{Katrin F\"assler}
\author[I. Y. Violo]{Ivan Yuri Violo}

\address{Department of Mathematics and Statistics\\ University of Jyv\"askyl\"a \\ P.O. Box 35 (MaD),
FI-40014 University of Jyv\"askyl\"a, Finland}

\email{katrin.s.fassler@jyu.fi}
 \email{ivan.violo@sns.it {\it (current affiliation: Centro di Ricerca Matematica Ennio De Giorgi, Scuola Normale Superiore, Piazza dei Cavalieri 3, 56126 Pisa (PI), Italy)}}

\thanks{The authors were supported by the Academy of Finland
grants no.\ 321696 (K.F.) and 328846, 321896 (I.Y.V.)}
\date{\today}
\subjclass[2020]{(Primary) 49Q15   (Secondary) 43A80, 43A85}
\keywords{quantitative rectifiability, Menger curvature, regular
curves, metric spaces, Heisenberg groups}

 \setcounter{tocdepth}{1}

\newcommand{\diam}{\operatorname{diam}}
\newcommand{\card}{\operatorname{card}}

\newcommand{\eps}{\varepsilon}

\renewcommand{\d}{{\rm d}}
\newcommand{\sfd}{{\sf d}}
\newcommand{\X}{{\rm X}}
\newcommand{\rr}{\mathbb R}

\def\Barint_#1{\mathchoice
          {\mathop{\vrule width 6pt height 3 pt depth -2.5pt
                  \kern -8pt \intop}\nolimits_{#1}}%
          {\mathop{\vrule width 5pt height 3 pt depth -2.6pt
                  \kern -6pt \intop}\nolimits_{#1}}%
          {\mathop{\vrule width 5pt height 3 pt depth -2.6pt
                  \kern -6pt \intop}\nolimits_{#1}}%
          {\mathop{\vrule width 5pt height 3 pt depth -2.6pt
                  \kern -6pt \intop}\nolimits_{#1}}}

\numberwithin{equation}{section}

\theoremstyle{plain}
\newtheorem{thm}[equation]{Theorem}
\newtheorem{thmdef}[equation]{Theorem and Definition}

\newtheorem{lemma}[equation]{Lemma}

\newtheorem{ex}[equation]{Example}
\newtheorem{cor}[equation]{Corollary}

\newtheorem{proposition}[equation]{Proposition}

\theoremstyle{definition}

\newtheorem{definition}[equation]{Definition}

\theoremstyle{remark}
\newtheorem{remark}[equation]{Remark}

\newcommand{\reg}{\mathrm{Reg}}
\newcommand{\nb}{\iota}

\addtolength{\hoffset}{-1.15cm} \addtolength{\textwidth}{2.3cm}
\addtolength{\voffset}{0.45cm} \addtolength{\textheight}{0.1cm}

\usepackage{hyperref}
\hypersetup{%
  colorlinks = true,
  linkcolor  = black
}

\allowdisplaybreaks

\begin{document}

\begin{abstract} {We introduce new flatness coefficients, which we call \emph{$\iota$-numbers}, for Ahlfors $k$-regular sets in metric spaces
($k\in \mathbb{N}$). Using these coefficients for $k=1$, we
characterize uniform $1$-rectifiability in rather general metric
spaces, completing earlier work by Hahlomaa and Schul. Our proof
proceeds by} quantifying an isometric embedding theorem due to
Menger, and by an abstract argument that allows to pass from a
local covering by continua to a global covering by $1$-regular
connected sets.
\end{abstract}

\maketitle

\tableofcontents

\section{Introduction}\label{s:intro}
This note is intended as a contribution to a broad program aimed
at extending the theory of \emph{quantitative} (or uniform)
\emph{rectifiability}, pioneered by David and Semmes in Euclidean
spaces \cite{MR1113517,MR1251061}, to other metric spaces. Recent
research in this direction concerns different classes of sets,
depending on the ambient metric space:
\begin{enumerate}
\item[(1)] quantitatively rectifiable sets modelled on \emph{Euclidean} spaces,
such as $1$-regular curves, or sets with big pieces of
(bi-)Lipschitz images of Euclidean sets, \item[(2)] sets that are
quantitatively rectifiable by specific types of
\emph{non-Euclidean} Lipschitz graphs, for instance in Heisenberg
groups
and parabolic spaces.
\end{enumerate}
 Here we
focus on direction (1). We introduce new  quantitative
coefficients called \emph{$\nb$-numbers}. \textcolor{black}{We
characterize uniform $1$-rectifiability in rather general metric
spaces (Theorem \ref{t:Char1URIntro}) by a Carleson-type
summability condition for
 $\nb$-numbers. The proof uses results that we believe to
be of independent interest and which we will explain in more
detail in the subsequent paragraphs.}

\subsection{From $\beta$-numbers  in Euclidean spaces \textcolor{black}{to $\nb$-numbers in metric spaces}}
Uniformly $k$-\textcolor{black}{rectifiable} sets in
$\mathbb{R}^n$ ($k,n\in \mathbb{N}$, $1\leq k<n$) can be
characterized in many equivalent ways, for instance as $k$-regular
sets with big pieces of Lipschitz images of subsets of
$\mathbb{R}^k$, or by means of a geometric lemma for Jones
\emph{$\beta_{q,\mathcal{V}_k}$-numbers} which quantifies the
approximability of the set by $k$-dimensional planes. Here $q$ is
allowed to be any number $1\leq q<\frac{2k}{k-2}$ if $k\geq 2$ and
$1\leq q\leq \infty$ if $k=1$, recall \cite[I,1.4]{MR1251061}.
\textcolor{black}{By ``$k$-regular'' we mean sets that satisfy the
Ahlfors $s$-regularity condition \eqref{eq:regular} for $s=k$.}
For the purpose of this introduction, we say that a $k$-regular
set $E$ in Euclidean space $\mathbb{R}^n$ satisfies the
\emph{$2$-geometric lemma with respect to
$\beta_{q,\mathcal{V}_k}$}, denoted $E\in
\mathrm{GLem}(\beta_{q,\mathcal{V}_k},2)$, if there is a constant
$M\geq 0$ such that
\begin{equation}\label{eq:Beta_q}
\int_{B_R(\textcolor{black}{x_0})\cap E} \int_0^{R}
\beta_{q,\mathcal{V}_k}(B_r(x)\cap
E)^2\,\frac{dr}{r}d\mathcal{H}^k(x)\leq M R^k\,\quad
\textcolor{black}{x_0}\in E,\,R>0,
\end{equation}
where the coefficients
\begin{equation}\label{eq:beta_intro}
\beta_{q,\mathcal{V}_k}(B_r(x)\cap E):= \inf_{V\in \mathcal{V}_k}
\left( \Barint_{B_r(x)\cap
E}\left[\frac{\sfd(y,V)}{\mathrm{diam}(B_r(x)\cap
E)}\right]^q\,d\mathcal{H}^k(y)\right)^{1/q}, \quad q\in
(0,\infty),
\end{equation}
quantify in a scale-invariant and $L^q$-based way how well the set
$E$ is approximated by $k$-planes $V\in \mathcal{V}_k$ at $x\in E$
and scale $r>0$ in the Euclidean distance. The number ``$2$'' in
the definition of $\mathrm{GLem}(\beta_{q,\mathcal{V}_k},2)$
corresponds to the exponent ``$2$'' in the expression
\eqref{eq:Beta_q}.

We study another family of quantitative coefficients that we call
\emph{$\nb$-numbers}. They are well-suited for generalizations to
metric spaces. Roughly speaking, $\nb$-numbers measure
``flatness'' of a set using mappings into model spaces, rather
than using the metric distance from approximating sets.
\textcolor{black}{They} can be used to formulate a geometric lemma
analogous to \eqref{eq:Beta_q}; see Definition \ref{d:GL} for a
very general definition of geometric lemmas, which we state in
terms of systems of Christ-David dyadic cubes. Roughly speaking,
the symbol $\mathrm{GLem}(h,p,M)$ denotes a Carleson measure
condition in the spirit of \eqref{eq:Beta_q} with $\beta$-numbers
replaced by other coefficients given by $h$, and the integrability
exponent ``$2$'' replaced by ``$p$''.

For $k\in \mathbb{N}$ and a $k$-regular set $E$ in a metric space
$(\X,\sfd)$, and for $q\in (0,\infty)$, we denote by
$\nb_{q,k}(B_r(x)\cap E)$ the number
\begin{equation}\label{eq:nb_intro_metric}
\inf_{\|\cdot\|}\inf_{f:B_r(x)\cap E\to \mathbb{R}^k} \left(
\Barint_{B_r(x)\cap E}\Barint_{B_r(x)\cap
E}\left[\frac{|\sfd(y,z)-\|f(y)-f(z)\||}{\mathrm{diam}(B_r(x)\cap
E)}\right]^q\,d\mathcal{H}^k(y)d\mathcal{H}^k(z)\right)^{1/q}.
\end{equation}
Here the first infimum is taken over all norms on $\mathbb{R}^k$, and the functions $f$ in the second infimum are assumed to be \emph{Borel}.

For illustration, suppose that the double integral in
\eqref{eq:nb_intro_metric} vanishes for some $\|\cdot\|$ and $f$.
Then, because $\|\cdot\|$ is bi-Lipschitz equivalent to the
Euclidean norm on $\mathbb{R}^k$, up to $\mathcal{H}^k$ measure
zero, $B_r(x)\cap E$ is bi-Lipschitz equivalent to a subset of
Euclidean $\mathbb{R}^k$.  If also the ambient space $(\X,\sfd)$
is the Euclidean space $\mathbb{R}^n$, actually much more is true.
Since $f$ arises as an isometric embedding from a positive measure
subset of  $(\mathbb{R}^k,\|\cdot\|)$ into (strictly convex)
Euclidean space, one can show that in fact $B_r(x)\cap E$ must be
essentially contained in a $k$-plane, \textcolor{black}{see
\cite[Section 3.1.2]{CarlesonPart2}.}
A refinement of this observation is stated in
Proposition \ref{prop:zero banach}.

For the development of a meaningful theory in metric spaces, it is
crucial  to allow  all possible norms $\|\cdot\|$ on
$\mathbb{R}^k$ in \eqref{eq:nb_intro_metric}, not just the
Euclidean norm. This is similar in spirit to the use of norms in
the definition of the \emph{Gromov-Hausdorff bilateral weak
geometric lemma (BWLG)} in \cite[Definition
3.1.5]{2023arXiv230612933B}, and in both cases the norms are
allowed to depend on the point $x$ and the scale $r$. In the
recent breakthrough \cite{2023arXiv230612933B}, Bate, Hyde, and
Schul characterized, in arbitrary metric spaces, $k$-regular sets
with big pieces of Lipschitz images of $\mathbb{R}^k$ as those
$k$-regular sets that satisfy a Gromov-Hausdorff BWGL, or some
other equivalent conditions inspired by Euclidean quantitative
rectifiability. Not contained in their characterization is,
quoting the authors, ``a condition on the square summability of
some suitable variant of the Jones $\beta$-number'', while they
observed that generalizing the main result in \cite{MR3162255}
could be a first step in this direction. Finding a suitable
Carleson summability condition in this generality is a well-known
problem to which Schul alluded already in
\cite{2007arXiv0706.2517S}. We do not claim to obtain a solution
of this problem \textcolor{black}{for $k>1$}, but we hope that the
present paper could serve as a motivation to investigate
characterizations of geometric lemmas for $\nb$-coefficients.
\textcolor{black}{Here we show that the validity of  a geometric
lemma for the $\nb_{1,1}$-numbers defined in
\eqref{eq:nb_intro_metric} for $k=q=1$ is indeed equivalent to
uniform $1$-rectifiability in rather general metric spaces. In a
companion paper \cite{CarlesonPart2}, for arbitrary $k\in
\mathbb{N}$, we study a variant of the $\nb$-numbers that are
tailored specifically to Euclidean spaces and we prove that these
\emph{$\nb_{1,\mathcal{V}_k}$-numbers} can be used to characterize
uniformly $k$-rectifiable sets in $\mathbb{R}^n$ for any $k\geq
1$.}

\subsection{From Menger curvature to $\nb$-numbers in dimension
$1$} We explain some ideas behind the characterization of
uniformly $1$-rectifiable sets by means of  $\nb_{1,1}$-numbers.

Recall that $\nb_{1,1}(B_r(x)\cap E)=0$ implies the existence of
an isometric embedding from $(B_r(x)\cap E,\sfd)$, up to a $\mathcal{H}^1$ null set, into
$\mathbb{R}$. (Here we may without loss of generality assume that
the target space $\mathbb{R}$ is equipped with the Euclidean
norm). Menger \cite{MR1512479} proved criteria for isometric
embeddability of metric spaces into Euclidean $\mathbb{R}^k$. The
case $k=1$ of one of his results can be stated as follows, see
\cite{MR2888543}. If $(\X,\sfd)$ is a space with at least
five points such that the triangular excess vanishes for any
triple of points in $\X$, that is, every such triple embeds
isometrically into $\mathbb{R}$, then the whole space $\X$ embeds
isometrically into $\mathbb{R}$.

In Theorem \ref{thm:quantified menger} we obtain a quantitative
version of Menger's result that applies to metric spaces where the
triangular excess of point triples is not necessarily identically
zero, but sufficiently small. In particular we give a condition under which
such spaces embed into $(\mathbb{R},|\cdot|)$ by an almost
isometry.  With a further refinement of the  arguments we obtain also an integral version of this statement (see Theorem \ref{thm:l1 menger}) which can then be applied
 to relate $\nb_{1,1}$-numbers to the \emph{metric
$\beta$-numbers} known from the literature.
 The latter are quantitative coefficients  defined in
terms of triangular excess, see \eqref{eq:MetricBeta}. We
call them \emph{$\kappa$-numbers} in this note to emphasize the
connection with Menger curvature, see \eqref{eq:ExcessMenger} and
Example~\ref{ex:MetricBetas}.

Building on Theorem \ref{thm:l1 menger} and heavily on earlier work of Hahlomaa \cite{MR2297880} and
Schul \cite{MR2337487,2007arXiv0706.2517S,MR2554164}, we obtain
the following characterization.

\begin{thm}[Characterizations of uniform $1$-rectifiability]\label{t:Char1URIntro}
 Let $(\X,\sfd)$ be a complete, doubling,
and quasiconvex metric space. The following conditions are
quantitatively equivalent for a $1$-regular set $E$ in
$(\X,\sfd)$:
\begin{enumerate}
\item\label{i:IntroInReg} $E$ is contained in a closed and
connected $1$-regular set, \item\label{i:IntroBPLI} $E$ has big
pieces of Lipschitz images of subsets of $\mathbb{R}$, \item \label{i:IntroBPBI} $E$
has big pieces of bi-Lipschitz images of subsets of $\mathbb{R}$,
\item\label{i:Introkappa} $E$ satisfies the geometric lemma
$\mathrm{GLem}(\kappa,1)$, \item\label{i:nb_char} $E$ satisfies
the geometric lemma $\mathrm{GLem}(\nb_{1,1},1)$.
\end{enumerate}
\end{thm}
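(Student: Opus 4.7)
The plan is to extend the chain of equivalences among (\ref{i:IntroInReg})--(\ref{i:Introkappa}), which is essentially established in the work of Hahlomaa \cite{MR2297880} and Schul \cite{MR2337487, 2007arXiv0706.2517S, MR2554164}, by incorporating the new condition (\ref{i:nb_char}) by means of the quantitative Menger-type Theorem \ref{thm:l1 menger}. I would organize the argument as the cycle (\ref{i:IntroInReg}) $\Rightarrow$ (\ref{i:IntroBPBI}) $\Rightarrow$ (\ref{i:IntroBPLI}) $\Rightarrow$ (\ref{i:Introkappa}) $\Rightarrow$ (\ref{i:IntroInReg}), together with (\ref{i:Introkappa}) $\Leftrightarrow$ (\ref{i:nb_char}).

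For the cycle, (\ref{i:IntroBPBI}) $\Rightarrow$ (\ref{i:IntroBPLI}) is immediate, and (\ref{i:IntroInReg}) $\Rightarrow$ (\ref{i:IntroBPBI}) follows from the fact that a closed, connected $1$-regular set in a doubling space admits, at every scale, big pieces carried by arclength-parameterized bi-Lipschitz arcs. For (\ref{i:IntroBPLI}) $\Rightarrow$ (\ref{i:Introkappa}), Hahlomaa's $L^{2}$-bound on the Menger curvature of Lipschitz curves yields $\mathrm{GLem}(\kappa,1)$ on each Lipschitz piece, and the big-pieces hypothesis promotes this to $\mathrm{GLem}(\kappa,1)$ for the whole set via a standard Carleson averaging argument. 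Finally, (\ref{i:Introkappa}) $\Rightarrow$ (\ref{i:IntroInReg}) is Schul's metric Traveling Salesman Theorem, which produces a closed connected $1$-regular set containing $E$ from the summability of the $\kappa$-numbers.

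To incorporate (\ref{i:nb_char}), for (\ref{i:nb_char}) $\Rightarrow$ (\ref{i:Introkappa}) I would argue by a cube-by-cube comparison $\kappa \lesssim \nb_{1,1}$: given any norm $\|\cdot\|$ on $\mathbb{R}$ and any Borel map $f:B_{r}(x)\cap E \to (\mathbb{R}, \|\cdot\|)$, for every triple $(y_{1},y_{2},y_{3})\in (B_{r}(x)\cap E)^{3}$ the triangular excess of $(y_{1},y_{2},y_{3})$ relative to $\sfd$ differs from the (vanishing) excess of $(f(y_{1}),f(y_{2}),f(y_{3}))$ on the line by at most the sum of the three errors $|\sfd(y_{i},y_{j})-\|f(y_{i})-f(y_{j})\||$; integrating over the additional third variable (which is harmless at the scale of a single ball), taking the infimum over $f$ and $\|\cdot\|$, and Carleson-summing over Christ--David cubes yields the implication. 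The converse (\ref{i:Introkappa}) $\Rightarrow$ (\ref{i:nb_char}) is where Theorem \ref{thm:l1 menger} enters crucially: it furnishes, from an $L^{1}$-control of the triangular excess on a ball, an explicit Borel map $f$ into $(\mathbb{R},|\cdot|)$ whose pairwise distortion is controlled in $L^{1}$ by the same quantity. Substituting this map into the definition of $\nb_{1,1}$ at each scale and summing yields $\mathrm{GLem}(\nb_{1,1},1)$ from $\mathrm{GLem}(\kappa,1)$.

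The main obstacle is the direction (\ref{i:Introkappa}) $\Rightarrow$ (\ref{i:nb_char}), which rests entirely on the quantitative Menger embedding of Theorem \ref{thm:l1 menger}. Menger's classical theorem yields only a qualitative isometric embedding from the identical vanishing of all triangular excesses; producing an almost-isometric Borel map into $(\mathbb{R},|\cdot|)$ from a merely $L^{1}$-small control of the excess requires a construction robust enough to tolerate a small positive fraction of ``bad'' triples, and this is the technical heart of the argument. The other subtle point, already signaled in the discussion preceding the statement, is that there is no pointwise pairing between $\nb_{1,1}$ and $\kappa$ strong enough to bypass this quantitative embedding step.
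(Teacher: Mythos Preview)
Your treatment of the equivalence (\ref{i:Introkappa}) $\Leftrightarrow$ (\ref{i:nb_char}) is essentially the paper's: the pointwise bound $\kappa\lesssim \nb_{1,1}$ via the triangular-excess trick, and the converse via the $L^1$-quantified Menger embedding (Theorem~\ref{thm:l1 menger}). In fact the paper pushes this to cube-level inequalities $\kappa(2Q)\le 3\,\nb_{1,1}(2Q)$ and $\nb_{1,1}(2Q)\le C\,\kappa(7Q)$, so your closing remark that ``there is no pointwise pairing'' is not quite right: there \emph{is} one, but obtaining the second inequality is precisely where Theorem~\ref{thm:l1 menger} is spent.

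The genuine gap is your step (\ref{i:Introkappa}) $\Rightarrow$ (\ref{i:IntroInReg}). You attribute it to ``Schul's metric Traveling Salesman Theorem,'' but the available statement (Schul's survey \cite[Theorem~3.11]{MR2342818}, attributed to Hahlomaa) covers only \emph{bounded} sets in \emph{geodesic} spaces; the theorem here is stated for possibly unbounded $E$ in a merely \emph{quasiconvex} space (e.g.\ $(\mathbb{H}^1,d_{\mathbb{H}^1})$ with the Kor\'anyi metric, which is not geodesic). The paper supplies exactly this missing piece in Section~\ref{s:SuffRegCoverCurve}: Hahlomaa's published result \cite[Theorem~1.1]{MR2297880} gives only \emph{local} Lipschitz coverings of $E\cap B_r(x)$, and the paper develops an abstract local-to-global argument (Theorem~\ref{t:FromConnToReg}, Corollary~\ref{c:FromLipToReg}, and the unbounded extension via Proposition~\ref{prop:unbounded connected covering}) that produces a single closed connected $1$-regular set $\Gamma_0\supset E$. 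This construction---minimize $\mathcal{H}^1$ over covering continua and prove upper regularity by a contradiction using a coarea slice plus a short-path lemma (Proposition~\ref{p:Conn})---is where quasiconvexity and doubling are actually used, and it is not in the Hahlomaa--Schul literature. Your proposal treats this as a black box that does not exist in the generality needed.

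A smaller point: your route (\ref{i:IntroBPLI}) $\Rightarrow$ (\ref{i:Introkappa}) via ``Hahlomaa's $L^2$ bound on Lipschitz curves plus Carleson averaging'' is plausible but not how the paper (or Schul) argues; the paper simply cites Schul \cite{2007arXiv0706.2517S} for the equivalence of (\ref{i:IntroBPLI}), (\ref{i:IntroBPBI}), (\ref{i:Introkappa}), and for (\ref{i:IntroInReg}) $\Rightarrow$ (\ref{i:IntroBPLI}) uses Schul's bound \cite[Theorem~1.10]{MR2337487} on the connected cover $\Gamma_0$ together with a localization (Proposition~\ref{prop:localize}) and the already-established (\ref{i:Introkappa}) $\Rightarrow$ (\ref{i:IntroInReg}) at small scale.
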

The equivalence of the conditions  \eqref{i:IntroBPLI},  \eqref{i:IntroBPBI}, and  \eqref{i:Introkappa} was proven by Schul in \cite{2007arXiv0706.2517S} using also earlier work by Hahlomaa and himself.
Compared to these results, the novelty in our
Theorem \ref{t:Char1URIntro} is the equivalence of the other
conditions with property \eqref{i:nb_char}. The
implication from
\eqref{i:Introkappa} to \eqref{i:IntroInReg} is also new for quasiconvex spaces. For bounded sets in \emph{geodesic} spaces, it was  stated by Schul in \cite[Theorem
3.11]{MR2342818}, attributed to Hahlomaa,  see also
\cite[Theorem 1.5]{MR2337487}. Our
proof of the implication ``\eqref{i:Introkappa} $\Rightarrow$ \eqref{i:IntroInReg}'' (formulated as Corollary \ref{c:IntMen}) is directly based on one
of Hahlomaa's published results (\cite[Theorem 1.1]{MR2297880})
coupled with an abstract argument that allows to pass from a local covering
by continua to a global covering by 1-regular connected sets, see Corollary
\ref{c:FromLipToReg} and Corollary \ref{c:unbounded}. This proof strategy works in quasiconvex spaces and thus makes the result applicable for instance in the
first Heisenberg group $\mathbb{H}^1$ equipped with the
Kor\'{a}nyi distance $d_{\mathbb{H}^1}$. We discuss such an application below in Theorem \ref{t:CL}.

Adapting Menger's ideas we were able to show that conditions
\eqref{i:Introkappa} and \eqref{i:nb_char} are not only equivalent
at the level of geometric lemmas, but the coefficients $\kappa$
and $\nb_{1,1}$ are comparable on neighborhoods of individual
dyadic cubes. For more detailed statements, see Corollary
\ref{eq:equiv1UR} and Theorem \ref{t:Char1URII}, in particular
\eqref{eq:equivalent alk}, later in this note.
By the work of Bate, Hyde, and Schul \cite{2023arXiv230612933B} in
the $1$-regular case, the conditions in Theorem
\ref{t:Char1URIntro} are further equivalent to $E$ satisfying a
(Gromov-Hausdorff) bilateral weak geometric lemma or any of the
other conditions stated in \cite[Theorem B]{2023arXiv230612933B}.

\subsection{Relation to previous work} This note has been
motivated by several lines of research, which we briefly sketch
here. The results are too numerous for an exhaustive list, but we hope that the interested reader will find some
directions for further reading. We  also refer to the survey \cite{MR4520153} by Mattila for more information.

\subsubsection{Euclidean-type (quantitative) rectifiability.} The qualitative theory of Federer-type rectifiability
in metric spaces  (using Lipschitz images of subsets of Euclidean
spaces) \cite{MR1189747,MR1800768,MR4506771} and the already
well-established quantitative theory in Euclidean space
\cite{MR1113517,MR1251061} motivated the recent work by Bate,
Hyde, and Schul \cite{2023arXiv230612933B}. This provides several
equivalent characterizations of sets that are quantitatively
rectifiable modelled on Euclidean spaces. Pivotal examples from
the literature where this notion of uniform rectifiability is
well-suited are low-dimensional sets in Heisenberg groups
\cite{hahheisenberg,MR3678492,MR4299821,MR4375018}, and subsets of
regular curves in metric spaces \cite{MR2297880,MR2337487}. For
the case of $1$-dimensional sets in metric spaces, there is also a
growing body of literature concerned with the  travelling salesman
theorems \textcolor{black}{and quantitative methods for the study
of \emph{qualitatively} rectifiable sets}
\cite{2021arXiv210906753B,2020arXiv200211878B,MR3319560,MR2371434,MR3456155,MR3512421}.
Li introduced and used in
\cite{https://doi.org/10.1112/jlms.12582} \emph{stratified
$\beta$-numbers} to characterize subsets of Carnot groups that are
contained in rectifiable curves. Coefficients of this type
certainly seem promising to study also uniform rectifiability for
low-dimensional sets in Carnot groups. On the other hand, they are
defined specifically for the setting of stratified Lie groups,
while an advantage of the $\kappa$- and the $\nb$-coefficients is
their versatility. The $\kappa$-numbers are tailored to
$1$-dimensional sets, but higher-dimensional variants have been
considered in \cite{MR2554164,MR3162255} for images of Lipschitz
functions $f\colon[0,1]^k \to (\X,\sfd)$. Various coefficients
related to Menger curvatures have also been used to characterize
higher-dimensional (uniform) rectifiability in Euclidean spaces
\cite{MR2558685,MR2848529,MR3729499,10.14321/realanalexch.46.1.0001}.
Investigating connections between $\iota$-numbers and
higher-dimensional variants of $\kappa$-numbers could be an
interesting topic for future research.

\subsubsection{Other notions of quantitative rectifiability} Motivated by
specific PDEs, quantitative theories of rectifiability have also been
developed in settings where the natural building blocks are
different from Lipschitz images of subsets of $\mathbb{R}^k$. This
applies for instance to quantitative rectifiability for
$1$-codimensional sets in parabolic spaces
 \cite{MR1323804,MR1996443,MR2053754,MR4605204} (where \emph{regular
 parabolic Lipschitz graphs} are used)
 and sub-Riemannian Heisenberg
groups \cite{MR3992573,chousionis2020strong,2023arXiv230413711C,MR4127898}
(where \emph{intrinsic Lipschitz graphs} are studied), and is not directly related to the present paper.

\subsubsection{Axiomatic results in metric spaces} In addition to the mentioned papers which concern
specific model spaces, there are also a results available that
deal with concepts related to rectifiability and quantitative
rectifiability in rather abstract, axiomatic settings
\cite{MR3470666,MR4485846,MR4514458}. While
\cite{MR4485846,MR4514458} are motivated by applications to
parabolic spaces and Heisenberg groups, respectively, the main
ingredients in both cases are abstract metric space results. The
paper \cite{MR4514458} contains a sufficient criterion for a
metric space to admit a big bi-Lipschitz piece of a model space.
Unfortunately, the assumptions of the theorem are stronger than
 the information we can deduce from the validity of a
geometric lemma for $\nb$-numbers. In \cite{MR4485846}, the
authors provide a general framework for the study of corona
decompositions and geometric lemmas in metric spaces and we follow
their notation to a large extent. However, the main results in
\cite{MR4485846} do not seem to have direct applications in our
setting, which concerns $\nb$-numbers defined through mappings,
rather than $\beta$-numbers defined through approximating sets.

\subsubsection{Approximate isometries} The definition of
$\nb$-numbers is inspired by the $\mathbf{b}$-numbers
studied by the second-named author in \cite{MR4489627}. The
coefficients employed in the present paper differ from the
$\mathbf{b}$-numbers in two crucial ways: first, they are
$L^q$-based, $q\in [1,\infty)$ instead of $L^{\infty}$-based, and
second, they can be defined in arbitrary metric spaces. The
$\mathbf{b}$-numbers in  \cite{MR4489627} are defined via approximate isometries. A
geometric lemma for $\nb$-numbers heuristically still yields many
almost isometric mappings from the given set to model spaces, but
in general it remains an open question if and how this information
can be used to build big pieces of Euclidean bi-Lipschitz images
inside the set.

\smallskip

\textbf{Structure of the paper.} Section \ref{s:prelim} contains
preliminaries. \textcolor{black}{In particular, we collect various
facts about geometric lemmas that will be used here and in the
sequel \cite{CarlesonPart2}.} In the main part of the paper
(Section \ref{s:SuffRegCoverCurve} and Section \ref{s:Char1UR}) we
discuss $1$-regular sets in metric spaces. In Section
\ref{s:ConstrCover} we give sufficient local conditions for the
existence of global $1$-regular covering continua for sets in
metric spaces. In Section \ref{s:ApplMenger} we present an
application (Corollary \ref{c:IntMen}) where these local
conditions are satisfied thanks to a result by Hahlomaa. As a
corollary, we complete in Section \ref{s:Char1UR} the proof of
\textcolor{black}{the characterization of uniform
$1$-rectifiability in metric spaces stated in Theorem
\ref{t:Char1URIntro}}. Appendix \ref{sec:appendix} contains
technical results needed in Section \ref{s:Char1UR}, related to
the quantification of Menger's theorem about isometric embeddings
into $\mathbb{R}$.

\smallskip

\textbf{Acknowledgements.} We are grateful to Tuomas Orponen for illuminating discussions,
 especially for crucial help related to the construction of $1$-regular covering
 curves. \textcolor{black}{We thank the referees for their careful reading of the
 paper and comments that helped to substantially improve the
 presentation.}

\section{Preliminaries}\label{s:prelim}
\textbf{Notation.} We write $A \lesssim B$ to denote the existence
of an absolute constant $C \geq 1$ such that $A \leq CB$. The
 inequality $A \lesssim B \lesssim A$ is abbreviated to
$A \sim B$. If the constant $C$ is allowed to depend on a
parameter "$p$", we indicate this by writing $A \lesssim_{p} B$.
We denote the diameter of a set $E$ in a metric space by
$\mathrm{diam}(E)$ and use the convention that
$\mathrm{diam}(E)=+\infty$ if $E$ is unbounded.

\subsection{Standard quantitative notions}

Throughout this paper \textcolor{black}{-- and its sequel
\cite{CarlesonPart2} --} we employ quantitative notions that are
ubiquitous in the theory of uniform rectifiability in Euclidean
spaces and that are increasingly applied in other metric spaces as
well. The terminology used in Sections
\ref{ss:Ahlfors}-\ref{sec:glem} follows closely the presentation
in \cite{MR4485846} in the case of Hausdorff measures
$\mu=\mathcal{H}^s|_E$. Readers familiar with the standard
terminology may wish to proceed directly to Section
\ref{s:NewCoeff}, where we introduce new quantitative
coefficients.

\begin{definition}[Quasiconvex metric space]\label{def:quasiconvex}
    A metric space $(\X,\sfd)$ is called \emph{quasiconvex}
     if there exists a constant $L\ge 1$, called \emph{quasiconvexity constant}, such that every couple of points $x,y \in \X$ can be joined by a curve of length at most $L\sfd(x,y).$
\end{definition}

We denote by $B_r(x)=\{y\in \X: \sfd(x,y)< r\}$ the open ball with
center $x$ and radius $r$ in a given metric space $(\X,\sfd)$.

 A metric space $(\X,\sfd)$ is commonly said to be
\emph{doubling} if
    for all $x\in \X$ and $r>0$  the ball $B_{r}(x)$ can be covered by the union of at most $C$ balls of radius $\frac r2,$
    for some constant $C>1$ independent of $x$ and $r$. For our
    purposes it will be more convenient to use the following,
    equivalent, condition:
\begin{definition}[Doubling metric space]\label{def:doubling}
    A metric space $(\X,\sfd)$ is called \emph{doubling} if there exists a constant $D\ge 1$, called \emph{doubling constant}, such that,
 for every
$\varepsilon \in (0,1/2]$,   every subset of $\X$ of diameter $r$ in
can be covered by $\leq \varepsilon^{-D}$ sets of
diameter at most $\varepsilon r$.
\end{definition}
The covering in Definition  \ref{def:doubling} can also be taken
to have uniformly bounded overlap, with multiplicity depending
only on $D$.

\subsubsection{Ahlfors regular sets and dyadic
systems}\label{ss:Ahlfors}

\begin{definition}[$s$-regular sets]
A set $E\subset (\X,\sfd)$ with $\mathrm{diam}(E)>0$ is said to be
\emph{$s$-regular}, $s>0$, if it is closed
 and
there exists $C\ge 1$, called \emph{regularity constant}, such
that
\begin{equation}\label{eq:regular}
    C^{-1}r^s\le \mathcal{H}^s(B_r(x)\cap E)\le C r^s, \quad  x \in E,\,\,
    r\in(0,2\diam(E)),
\end{equation}
in which case we write $E\in\mathrm{Reg}_s(C)$.
 Furthermore if only the first (resp.\ the second) inequality in
     \eqref{eq:regular} is satisfied and  $E$ is not necessarily closed we say that $E$ is \emph{lower} (resp.\ \emph{upper})
      $s$-regular and we write $E\in \reg^-_s(C)$ (resp.\ $E\in \reg^+_s(C)$).
      Finally we say that the metric space $(\X,\sfd)$ is $s$-regular if the whole set $\X$ is an $s$-regular
      set with respect to $\sfd$. We also use the term \emph{Ahlfors regular} to denote the class of sets that are $s$-regular for some exponent $s$.
\end{definition}

Up to replacing $C$ by $2^sC$, the second inequality in
\eqref{eq:regular} holds also for arbitrary $x \in \X$ and $r>0$.
Moreover it can be checked from the definition that if $E_i\in
\mathrm{Reg}_s(C_i)$ for $i\in \{1,2\}$ are intersecting sets of a
common ambient space, then $E_1\cup E_2 \in \mathrm{Reg}_s(C)$
with a constant $C$ that can be taken to depend only on the regularity constants $C_1$
and $C_2$ of the two initial sets.

\medskip

Regular sets in
metric spaces admit systems of generalized dyadic cubes. For
$k$-regular sets in $\mathbb{R}^n$, the existence of such systems
was proven by David in \cite[B.3]{MR1009120}, \cite{MR1123480}.
More generally, Christ constructed dyadic cube systems for spaces
of homogeneous type in \cite[Theorem 11]{MR1096400}, see also
\cite{MR2901199} and references therein for variants of this
construction. We use the version for Ahlfors regular sets in
metric spaces as stated in \cite[Lemma 2.5]{MR4485846}, see also \cite[Sect. 5.5]{MR1616732}, but we
include a separate notational convention for bounded sets
following the comment on \cite[p.22]{MR1616732}.
 If the
regular set $E$ is bounded, we define $\mathbb{J}:= \{j\in
\mathbb{Z}\colon j\geq n\}$ where $n\in \mathbb{Z}$ is such that
$2^{-n} \leq \mathrm{diam}(E) < 2^{-n+1}$, otherwise we denote
$\mathbb{J}:=\mathbb{Z}$.

\begin{thmdef}[Dyadic systems \cite{MR1009120,MR1096400}]\label{dl:dyadic} For any $s>0$ and $C\geq 1$, there exists
a constant $c_0\in (0,1)$  such that in an
arbitrary
metric space, every set $E\in\mathrm{Reg}_s(C)$
 admits a \emph{system of dyadic cubes}
$\Delta=\bigcup_{j\in\mathbb{J}} \Delta_j$, where $\Delta_j$ is a
family of pairwise disjoint Borel sets $Q\subset E$ (\emph{cubes})
satisfying
\begin{enumerate}
\item\label{dyadic1} $E=\bigcup_{Q\in \Delta_j}Q$ for each $j\in
\mathbb{J}$, \item\label{dyadic2} for $i,j\in \mathbb{J}$ with
$i\leq j$, if $Q\in \Delta_i$ and $Q'\in \Delta_j$, then either
$Q'\subset Q$ or $Q\cap Q'=\emptyset$, \item\label{dyadic3} for
$j\in \mathbb{J}$, $Q'\in \Delta_j$ and $i<j$ with $i\in
\mathbb{J}$, there is a unique $Q\in \Delta_i$ (\emph{ancestor})
such that $Q'\subset Q$, \item\label{dyadic4} for $j\in
\mathbb{J}$ and $Q\in \Delta_j$, it holds $\mathrm{diam}(Q)\leq
c_0^{-1} 2^{-j}$, \item\label{dyadic5} for $j\in \mathbb{J}$ and
$Q\in \Delta_j$, there is a point $x_Q \in E$ (\emph{center}) such
that $B_{c_0 2^{-j}}(x_Q)\cap E \subset Q$.
\end{enumerate}
For $j\in \mathbb{J}$ and $Q\in \Delta_j$, we denote $\ell(Q):=
2^{-j}$ and refer to this as the \emph{side length} of the cube.
We also define
\begin{displaymath}
\Delta_{Q_0}:=\{Q\in \Delta\colon Q\subset Q_0\},\quad Q_0 \in
\Delta,
\end{displaymath}
and  for a given constant $K>1$, we set
\begin{displaymath}
KQ:= \{x\in E\colon \mathrm{dist}(x,Q) \leq
(K-1)\,\mathrm{diam}(Q)\}.
\end{displaymath}
\end{thmdef}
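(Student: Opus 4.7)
The plan is to construct the system $\Delta$ via Christ's method, using maximal separated nets at each scale and a tree structure on net points, following the outline of \cite{MR1096400}. The bounded and unbounded cases are handled uniformly once the indexing set $\mathbb{J}$ is fixed as in the statement.

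\textbf{Step 1: Nets and tree structure.} For each $j \in \mathbb{J}$, I would pick, by a Zorn/greedy argument, a maximal $c_1 2^{-j}$-separated set $X_j \subset E$ for some small universal constant $c_1 \in (0,1)$ (say $c_1 = 1/3$); maximality gives the covering property $E = \bigcup_{x \in X_j} B_{c_1 2^{-j}}(x)$. In the bounded case with $2^{-n} \leq \mathrm{diam}(E) < 2^{-n+1}$, I would set $X_n := \{x_0\}$ for an arbitrary $x_0 \in E$, so that the single top-level cube is all of $E$. For each $y \in X_{j+1}$ I would then select a \emph{parent} $p(y) \in X_j$ at distance less than $c_1 2^{-j}$; this exists again by maximality of $X_j$. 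Writing $\preceq$ for the resulting ancestor relation gives a rooted tree (or forest) on $\bigsqcup_{j \in \mathbb{J}} X_j$.

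\textbf{Step 2: Definition of cubes.} For each $x \in X_j$, I would define the preliminary cube
\[
\widetilde Q_x^j := \bigcup_{k \geq j} \bigcup_{\substack{y \in X_k \\ y \preceq x}} \{ z \in E \colon y \text{ is the point of } X_k \text{ closest to } z \text{ under a fixed tie-breaking order} \},
\]
then take $Q_x^j$ to be its Borel regularization (e.g.\ a Borel representative of the equivalence class obtained by resolving ties via any fixed well-ordering of $\bigcup_j X_j$). The Ahlfors regularity of $E$, combined with the doubling property that it implies, guarantees that the nets $X_j$ are countable and that the involved unions are Borel.

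\textbf{Step 3: Verification of (1)--(5).} Property \eqref{dyadic1} follows from the covering property of the nets, and \eqref{dyadic2}--\eqref{dyadic3} follow from the tree structure on $\bigcup_j X_j$ and the definition of $Q_x^j$ via descendants. Property \eqref{dyadic4} is clear from the triangle inequality: any two descendants of $x \in X_j$ at level $k$ lie within $\sum_{\ell \geq j} 2 c_1 2^{-\ell} \lesssim 2^{-j}$ of $x$, so $\mathrm{diam}(Q_x^j) \lesssim 2^{-j}$ after absorbing constants into $c_0$. Property \eqref{dyadic5} is the subtle one: I would set $x_Q := x$ and show that if $z \in B_{c_0 2^{-j}}(x) \cap E$ for a sufficiently small $c_0 \in (0, c_1)$, then at every level $k \geq j$ the nearest net point of $z$ in $X_k$ is forced by separation to be a descendant of $x$, which places $z$ in $Q_x^j$.

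\textbf{Main obstacle.} The delicate point is ensuring the ``center ball'' condition \eqref{dyadic5} simultaneously with the partition property \eqref{dyadic1}: the tie-breaking used to turn the Voronoi-like covers into a partition could in principle slice through small balls around centers. The standard workaround, which I would adopt, is to choose $c_0 \ll c_1$ so that the separation of $X_k$ at every scale $k \geq j$ strictly prevents any $X_k$-point outside the descendants of $x$ from being closer to $z \in B_{c_0 2^{-j}}(x) \cap E$ than the descendant of $x$ lying within $c_1 2^{-k}$ of $z$. Ahlfors regularity is used only indirectly (to ensure the nets are nonempty and the structure is measurable), so the construction actually works in any doubling metric space, as observed in the references.
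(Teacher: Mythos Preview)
The paper does not give its own proof of this statement. It is quoted as a known result from the literature --- attributed to David \cite{MR1009120,MR1123480} and Christ \cite{MR1096400}, with the specific version taken from \cite[Lemma 2.5]{MR4485846} --- and is used as a black box throughout. Your sketch follows the standard Christ construction (maximal nets, parent tree, Voronoi-type assignment), which is precisely what is cited, so there is nothing to compare.

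As a sketch of Christ's argument your outline is broadly correct, though a couple of points would need more care in a full write-up. In Step~2 your definition of $\widetilde Q_x^j$ conflates two things: the Voronoi-style nearest-point assignment should be done once at each level $k$ (giving a partition of $E$ into cells indexed by $X_k$), and then $Q_x^j$ is the union of level-$k$ cells over all descendants $y\preceq x$ and all $k\ge j$; as written the union over $k$ of nearest-point sets need not be a partition at level $j$. In Step~3 your argument for \eqref{dyadic5} asserts that the nearest $X_k$-point to $z\in B_{c_0 2^{-j}}(x)$ is always a descendant of $x$, but this requires an inductive argument (the descendant at level $k$ closest to $z$ lies within $\sum_{\ell=j}^{k} c_1 2^{-\ell}$ of $x$, and any non-descendant in $X_k$ is at distance at least $c_1 2^{-k}$ from all descendants), and the constants must be chosen so that the geometric sums work out; this is where the real work in Christ's proof lies, and ``$c_0\ll c_1$'' alone does not quite capture it.
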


Following are additional comments and notations about a system of
dyadic cubes that will be useful in the sequel. If $E\in
\mathrm{Reg}_s(C)$ and $\Delta$ is a dyadic system on $E$, then
for $Q\in \Delta$ and $K>1$, the set $KQ$ is simply the
intersection of $E$ with the closed $(K-1)\,\mathrm{diam}(Q)$
neighborhood of $Q$, and
\begin{equation}\label{eq:KQ_in_Ball}
KQ \subset B_{K\mathrm{diam}(Q)}(x_Q)\cap E.
\end{equation}
 Moreover, it follows from  conditions \eqref{dyadic4}-\eqref{dyadic5} that
\begin{equation}\label{eq:ball_in_Q}
B_{c_0 \ell(Q)}(x_Q) \cap E \subset Q \subset B_{c_0^{-1}
\ell(Q)}(x_Q) \cap E,\quad Q\in \Delta.
\end{equation}
Since $E\in \mathrm{Reg}_s(C)$, this and  \eqref{dyadic4} imply
that
\begin{equation}\label{eq:MeasCube}
C^{-1} (c_0 \ell(Q))^s \leq \mathcal{H}^s(Q) \leq C (c_0^{-1}
\ell(Q))^s \quad \text{and}\quad C^{-2/s} c_0 \ell(Q) \leq
\mathrm{diam}(Q) \leq c_0^{-1} \ell(Q).
\end{equation}
Combining the second \textcolor{black}{estimate} in
\eqref{eq:MeasCube} and condition \eqref{dyadic1} we can infer the
existence of a constant $K=K(s,C)>1$ such that the following holds
for all $z\in E$ and $0<R< \mathrm{diam}E$. If $j\in \mathbb{J}$
is such that $2^{-j}\leq R <2^{-j+1}$, then there exists $Q\in
\Delta_j$ such that
\begin{equation}\label{eq:ball_enlarged_cube}
E\cap B(z,R)\subset K Q.
\end{equation}
For every $Q\in \Delta_{j_0}$ and $j \in \mathbb N\cup\{0\}$ we
define the $j$-th descendants of $Q$ by
 \begin{equation}\label{def:descendants}
    F_j(Q)\coloneqq \{Q'\in  \Delta_{j+j_0} \ : \ Q'\subset Q\}.
 \end{equation}
It is easy to deduce from the first part of \eqref{eq:MeasCube},
and observing  that the cubes in $F_i(Q)$ are pairwise
disjoint, that
\begin{equation}\label{eq:number of descendants}
    \card(F_j(Q))\le c 2^{s\cdot j},
\end{equation}
for some constant $c$ depending only on $s$ and $C$.  Similarly, using again \eqref{eq:MeasCube}, for all $K\ge 1$ and all $Q\in \Delta_j$, $j\in \mathbb J,$ there exist cubes $Q_1,\dots,Q_m \in \Delta_j$, not necessarily distinct, such that
\begin{equation}\label{eq:cube patch}
    KQ\subset \cup_{i=1}^m Q_i\subset K_0Q
\end{equation} where $m\in \mathbb N$ and $K_0>1$ are constants depending only on $s,$  $C$ and $K$.
Finally we  note that combining \eqref{dyadic1} and \eqref{dyadic2} in Definition \ref{dl:dyadic} it follows that
\begin{equation}\label{eq:sum of parts}
    \sum_{Q'\in F_j(Q)}\mathcal{H}^s(Q')=\mathcal{H}^s(Q), \quad Q \in \Delta,\,  j \in \mathbb N \cup\{0\}.
\end{equation}

\subsubsection{Geometric lemmas for various coefficient functions}\label{sec:glem}

Throughout the paper, we will encounter various coefficients that
measure how well an $s$-regular set $E$ satisfies a certain
property at the scale and location of a given dyadic cube $Q$. We
are mainly concerned with the question whether $E$ fulfills a
Carleson-type summability condition in the spirit of a \emph{geometric lemma}
for the given set of coefficients. We first introduce the notation
for discussing these questions in a unified framework.

We let $\mathcal{B}(\X)$ be the Borel $\sigma$-algebra of a metric
space $(\X,\sfd)$. For a closed set $E\subset \X$, the family
$\{B\cap E\colon B\in \mathcal{B}(\X)\}$ coincides with the Borel
$\sigma$-algebra on $E$ with respect to the topology induced by
the metric $\sfd|_E$. We denote by $\mathcal{D}_s(E)$ the family
of bounded Borel sets in $E$ that have positive $\mathcal{H}^s$
measure. In particular, if $E$ is $s$-regular and $\Delta$ a
dyadic system on $E$, then $\Delta \subset \mathcal{D}_s(E)$ and
also $KQ\in \mathcal{D}_s(E)$ for every $Q\in \Delta$ and $K>1$.

\begin{definition}[Geometric lemma]\label{d:GL}
Given $p\in (0,\infty)$, $s>0$, an $s$-regular set $E$ in a metric
space, $\mu\coloneqq\mathcal{H}^s\lfloor_E$
 and a
 function
$h:\mathcal{D}_s(E) \to [0,1]$, we say that $E$ satisfies the
\emph{$p$-geometric lemma with respect to $h$}, and  write
$E\in\mathrm{GLem}(h,p)$, if there exists a constant $M$ such that
for every dyadic system $\Delta$ on $E$, we have
\begin{equation}\label{eq:SGL}
\sum_{Q\in \Delta_{Q_0}} h(2Q)^p\, \mu(Q) \leq M \mu(Q_0), \quad
Q_0\in \Delta.
\end{equation}
In this case, we also write  $E\in\mathrm{GLem}(h,p,M)$.
\end{definition}

In practice, the function $h$ will often arise as $h(S)\coloneqq H(S\cap
E)$, where $H$ depends on the regularity exponent $s$ of $E$, but
is defined for a larger class of Borel sets of the ambient space
$\X$, see Examples \ref{ex:Betas}-\ref{ex:MetricBetas} and
Definition \ref{d:alpha}.

\begin{remark}\label{rmk:indep glem} For many functions $h$ of interest and in particular for all the relevant ones appearing in this note,
the condition ``$E\in \mathrm{GLem}(\gamma,p)$'' is equivalent to
requiring \eqref{eq:SGL} for a \emph{specific} dyadic system
$\Delta(E)$ on $E$, rather than for \emph{all} possible such
systems. See  Lemma \ref{l:coeff} and Remark \ref{rmk:coeff}, or \cite[Remark 2.28]{MR4485846}. A related
statement for multi-resolution families (instead of systems of
dyadic cubes) is \cite[Lemma B.1]{MR4420442}.
\end{remark}

We next give two examples of geometric lemmas that have appeared
in the literature. In the first one, the coefficient function $h$
is a generalization of the classical $\beta$-numbers from Jones'
traveling salesman theorem.

\begin{ex}[$\beta$-numbers]\label{ex:Betas}
{\color{black}
We recall the definition of the usual $L^q$-based $\beta$-numbers  that appeared already in \eqref{eq:beta_intro}:
\[
\beta_{q,\mathcal{V}_k}(B_r(x)\cap E):= \inf_{V\in \mathcal{V}_k}
\left( \Barint_{B_r(x)\cap
E}\left[\frac{\sfd(y,V)}{\mathrm{diam}(B_r(x)\cap
E)}\right]^q\,d\mathcal{H}^k(y)\right)^{1/q}, \quad q\in
(0,\infty),
\]
where $E$ is a $k$-regular subset of $\rr^n$ and $\mathcal V_k$ is the family of $k$-dimensional affine planes. We  now generalize this this notion to an arbitrary metric space  $(\X,\sfd)$ by considering, instead of planes, a general family $\mathcal{A}$ of (non-empty) subsets of $\X$
} such that each point of $\X$ is contained in at least one element $A\in \mathcal{A}$.
 For $q\in (0,\infty]$, $s>0$,  a
closed set $E \subset \X$ of locally finite
$\mathcal{H}^s$-measure, and $\mu:=\mathcal{H}^s\lfloor_E$, we then define
for every $A\in \mathcal A$
\[
\beta_{q,A}(S)\coloneqq \left\{\begin{array}{ll}\left(
\frac{1}{\mu(S)}\int_{S}\left[\frac{\sfd(y,A)}{\mathrm{diam}(S)}\right]^q\,d\mu(y)\right)^{1/q},&
q\in (0,\infty),\\ \sup_{y\in
S}\frac{d(y,A)}{\mathrm{diam}(S)},& q=\infty,\end{array}
\right.\quad S\in \mathcal{D}_s(E)
\]
and
\begin{equation}\label{eq:SetTheoreticBeta}
\beta_{q,\mathcal{A}}(S):= \inf_{A\in
\mathcal{A}}\beta_{q,A}(S).
\end{equation}
This definition is typically applied if
 $S$ is a ``surface ball''
$B_r(x)\cap E$ or a set of the form $KQ$ for a dyadic cube $Q$ on
an $s$-regular set $E$. The definition of
$\beta_{{q},\mathcal{A}}(S)$ however makes sense more
generally, whenever $S$ is a Borel set with $0<\mathrm{diam}(S)<\infty$,
and we will occasionally apply it in this sense. If $E$ is
$s$-regular, the condition
$E\in\mathrm{GLem}(\beta_{q,\mathcal{A}},p)$ is equivalent to the
condition ``$E\in \mathrm{GLem}(\mathcal{A},p,q)$'' stated in
\cite[Defn 2.16]{MR4485846}.
The additional assumption on $\mathcal{A}$ is imposed to ensure that the function $\beta_{q,\mathcal{A}}$ takes values in $[0,1]$, which will be convenient in the following. In practice, milder assumptions would often suffice. Finally, $\beta_{q,\mathcal{A}}$ of course depends on $\mu$ (respectively on the set $E$), but since this dependence will always be clear from the context, we do not indicate it in our notation.
\end{ex}

The next example arises from the study of (quantitative)
$1$-rectifiability in metric spaces, and it involves exclusively
$1$-dimensional Hausdorff measures. The relevant coefficients can
be thought of as $1$-dimensional \emph{metric $\beta$-numbers},
and  they appeared with different notations in the literature. For
the purpose of this paper, we will refer to them as
\emph{$\kappa$-numbers}, where $\kappa$ is indicative of the
connection to Menger curvature. Before stating the definition, we
introduce some notation.

Given a metric space $(\X,\sfd)$ and three points
$x_1,x_2,x_3\in\X$ we define the \emph{triangular excess}
\begin{equation}\label{eq:def excess}
\begin{split}
     \partial (\{x_1,x_2,x_3\})&\coloneqq \inf_{\sigma \in S_3} \left\{\partial_1(x_{\sigma(1)},x_{\sigma(2)},x_{\sigma(3)})\right\}\\
&\coloneqq \inf_{\sigma \in S_3} \left\{\sfd(x_{\sigma(1)},x_{\sigma(2)})+\sfd(x_{\sigma(2)},x_{\sigma(3)})-\sfd(x_{\sigma(1)},x_{\sigma(3)})\right\},
\end{split}
\end{equation}
where $S_3$ is the group of permutations of $\{1,2,3\}$. Note that $\partial(\{x_1,x_2,x_2\})$ depends only on the set  $\{x_1,x_2,x_2\}$, while $\partial_1(x_1,x_2,x_3)$ takes into account also the order.  If the
three points lie at comparable distance from each other, then
their triangular excess is related to their \emph{Menger
curvature} $c(x_1,x_2,x_3)$ as indicated by formula
\eqref{eq:ExcessMenger}. Let $\{x_1',x_2',x_3'\}$ be the image of
$\{x_1,x_2,x_3\}$ under an isometric embedding of the triple into
the Euclidean plane. If $x_1',x_2',x_3'$ are colinear, we define
$c(x_1,x_2,x_3)=0$, otherwise $c(x_1,x_2,x_3)=1/R$, where $R$ is
the radius of the unique circle passing through $x_1',x_2',x_3'$.
With this definition, if $(x_1,x_2,x_3)$ belongs to
\begin{displaymath}
\mathcal{F}:=\{(x_1,x_2,x_3) \in E\times E \times E \ : \
\sfd(x_i,x_j)\le A \sfd(x_k,x_l), \, \text{ for all
}i,j,k,l\in\{1,2,3\}, \, k\neq l \}
\end{displaymath}
for some constant $A>0$, then
\begin{equation}\label{eq:ExcessMenger}
c^2(x_1,x_2,x_3) \,\mathrm{diam}\{x_1,x_2,x_3\}^3 \sim_{A}
\partial(\{x_1,x_2,x_3\}).
\end{equation}
see \cite[Remark 2.3]{MR2342818} or \cite[Remark 1.2]{MR2337487}.

\begin{ex}[$\kappa$-numbers]\label{ex:MetricBetas}
For $s>0$ and  a closed set $E \subset \X$ of locally finite
$\mathcal{H}^1$-measure, and $\mu\coloneqq\mathcal{H}^1\lfloor_E$ we
define
\begin{equation}\label{eq:MetricBeta}
\kappa(S):= \frac{1}{\mu(S)^3}\int_S\int_S\int_{S}
\frac{\partial(\{x_1,x_2,x_3\})}{\mathrm{diam}(S)}\,d\mu(x_1)d\mu(x_2)d\mu(x_3),
\end{equation}
for $S\in\mathcal{D}_s(E)$. Conditions in the spirit of geometric
lemmas for the coefficients $\kappa$ and related rectifiability
results have appeared in work of Hahlomaa and Schul
\cite{MR2163108,MR2297880,MR2456269,MR2337487,MR2342818,2007arXiv0706.2517S}.
\end{ex}

For many applications, it is irrelevant whether the condition
\eqref{eq:SGL} in the definition of the geometric lemma is stated
with constant ``$2$'' on the left-hand side or with any other
constant $K>1$. This is the case for the coefficient functions  in
Examples \ref{ex:Betas} and \ref{ex:MetricBetas} (but also later in Definition \ref{d:alpha}), as the following
result shows (see also Remark \ref{rmk:coeff} below). This is akin
to the situation in the case of Jones' traveling salesman theorem
in $\mathbb{R}^n$, see \cite[Corollary 2.3]{MR3319560}.

\begin{lemma}[Different neighborhoods of cubes]\label{l:coeff} Let
$E\in \mathrm{Reg}_s(C)$, $\mu\coloneqq \mathcal{H}^s\lfloor_E$, and let
$h\colon\mathcal{D}_s(E)\to [0,1]$ be a function with the
following property. For every $N\geq 1$, there exists a constant
$C_N$ such that the following monotonicity condition holds for all
$A,B\in \mathcal{D}_s(E)$:
\begin{equation}\label{eq:h_monot}
A\subset B\text{ and }\mathrm{diam}(B)^s \leq N
\mu(A),\quad\Rightarrow \quad h(A)\leq C_N h(B).
\end{equation}
Then, for every $K>K_0\geq 1$, there exists a constant
$m=m(K_0,K,s,C)$ such that if $\Delta$ is a dyadic system on $E$
and $Q_0\in \Delta$, then
\begin{equation}\label{eq:GL_K}
\sum_{Q\in \Delta_{Q_0}} h(KQ)^p\, \mu(Q) \lesssim \sum_{Q\in
\Delta_{\hat Q_0}} h(K_0Q)^p\, \mu(Q) ,
\end{equation}
where the implicit constant depends only on $K_0,K,s,C$ and $p$, while
 $\hat Q_0\in \Delta$ is such that $Q_0 \subset \hat Q_0$ and
$\ell(\hat Q_0) \leq 2^m \ell(Q)$. In particular the validity of $\mathrm{GLem}(h,p)$ does not depend on the choice of dyadic system.
\end{lemma}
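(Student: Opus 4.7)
The plan is to prove \eqref{eq:GL_K} by replacing, for each cube $Q\in\Delta_{Q_0}$, the enlargement $KQ$ by the $K_0$-enlargement of a carefully chosen ancestor $Q^*$ of $Q$ that lives only a bounded number $j_*$ of generations above $Q$ (where $j_*$ depends only on $K,K_0,s,C$). Once this is done, the monotonicity hypothesis \eqref{eq:h_monot} will give $h(KQ) \lesssim h(K_0 Q^*)$ pointwise, and a standard ``group by ancestor'' argument using disjointness of cubes at a fixed generation will assemble these inequalities into the desired estimate.

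The first step is the construction of $Q^*$. Given $Q\in \Delta$, I take $Q^*$ to be the unique ancestor of $Q$ at $j_*$ generations above, where $j_*$ is chosen so that $2^{j_*-1} \le Kc_0^{-2}C^{2/s}/(K_0-1)\le 2^{j_*}$. Using \eqref{eq:KQ_in_Ball} to bound $KQ$ by the ball $B_{K\mathrm{diam}(Q)}(x_Q)\cap E$ and the lower bound $\mathrm{diam}(Q^*) \ge C^{-2/s}c_0\ell(Q^*)$ from \eqref{eq:MeasCube}, a direct computation gives $KQ \subset K_0 Q^*$. Next, $\mathrm{diam}(K_0 Q^*) \lesssim K_0 \ell(Q^*) \lesssim K_0 2^{j_*}\ell(Q)$, while $\mu(KQ) \ge \mu(Q) \gtrsim \ell(Q)^s$ by \eqref{eq:MeasCube}. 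Hence $\mathrm{diam}(K_0 Q^*)^s \le N\mu(KQ)$ for a constant $N$ depending only on $K,K_0,s,C$, and the monotonicity assumption \eqref{eq:h_monot} yields $h(KQ)\le C_N h(K_0 Q^*)$.

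The second step is the summation. Setting $m\coloneqq j_*$ and letting $\hat Q_0$ be the ancestor of $Q_0$ at $m$ generations above (or the maximal available ancestor if $E$ is bounded and fewer generations exist), I check that $Q^*(Q)\in \Delta_{\hat Q_0}$ for every $Q\in \Delta_{Q_0}$: if $\ell(Q^*)\le \ell(Q_0)$ then $Q^*\subset Q_0\subset\hat Q_0$ by property \eqref{dyadic2}; otherwise $Q_0\subset Q^*$ by the same property, but $\ell(Q^*)\le \ell(\hat Q_0)$ by choice of $j_*$, forcing $Q^*\subset\hat Q_0$. Grouping the sum by the value of $Q^*$, I can write
\[
\sum_{Q\in\Delta_{Q_0}} h(KQ)^p\mu(Q) \le C_N^p \sum_{Q^{**}\in \Delta_{\hat Q_0}} h(K_0 Q^{**})^p\!\!\sum_{\substack{Q\in\Delta_{Q_0}\\ Q^*(Q)=Q^{**}}}\!\!\mu(Q),
\]
and since all such $Q$ are pairwise disjoint subsets of $Q^{**}$, the inner sum is at most $\mu(Q^{**})$, which yields \eqref{eq:GL_K}. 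The last assertion, that the validity of $\mathrm{GLem}(h,p)$ is independent of the chosen dyadic system, follows by the same mechanism: given two systems $\Delta^{(1)},\Delta^{(2)}$, each cube $Q\in \Delta^{(1)}$ can be covered by the $K$-enlargement of a comparable-scale cube in $\Delta^{(2)}$ (via \eqref{eq:ball_enlarged_cube}), and \eqref{eq:GL_K} then lets us shrink $K$ back to $2$.

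The main technical point, and thus the place requiring the most care, is the pointwise comparison $h(KQ)\lesssim h(K_0 Q^*)$: it is essential to verify that the ancestor constructed to ensure $KQ\subset K_0 Q^*$ is small enough in diameter (relative to $\mu(KQ)$) to trigger the monotonicity axiom \eqref{eq:h_monot} with a uniform constant. All other steps are combinatorial manipulations of the dyadic hierarchy.
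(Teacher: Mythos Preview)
Your approach is essentially the same as the paper's: construct an ancestor $Q^*$ of $Q$ a fixed number $j_*$ of generations above so that $KQ\subset K_0Q^*$, invoke the monotonicity hypothesis \eqref{eq:h_monot}, and sum. Your summation step (grouping by $Q^*$ and using disjointness of the $Q$'s with a common $Q^*$ to bound the inner sum by $\mu(Q^{**})$) is in fact slightly cleaner than the paper's version, which bounds $\mu(Q)\le\mu(\hat Q)$ and then uses a cardinality estimate on $\{Q:\hat Q=Q^{**}\}$.

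There is one small gap in the bounded case. You only discuss the ``maximal available ancestor'' fallback for $\hat Q_0$, but the same issue arises for each $Q^*$: if $Q\in\Delta_j$ with $n\le j<n+j_*$, then no ancestor $j_*$ levels above exists, and if you take instead the top-level ancestor in $\Delta_n$, the inclusion $KQ\subset K_0Q^*$ can fail (since $K_0Q^*$ need not be all of $E$). The paper handles this by splitting off the at most $j_*$ top generations of $\Delta_{Q_0}$ and bounding their contribution trivially via $h\le 1$ and \eqref{eq:sum of parts}; you should add a line to the same effect.
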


\begin{proof}
We assume first that $E$ is unbounded. Let $K_0\geq 1$ and fix a
constant $K>K_0$. Then there exists $m=m(K_0,K,s,C)\in \mathbb{N}$
such that for every $Q\in \Delta$ there is a unique ancestor $\hat
Q\in \Delta$ with
\begin{equation}\label{eq:AncestorCond}
Q\subset \hat Q,\quad \ell(\hat Q) = 2^m \ell(Q),\quad K Q \subset
K_0 \hat Q.
\end{equation}
Indeed, for arbitrary $m\in \mathbb{N}$, there clearly exists
$\hat Q\in \Delta$ satisfying the first two conditions in
\eqref{eq:AncestorCond}, and for such $\hat Q$,  it follows for
all $y\in KQ$ that
\begin{align*}
\mathrm{dist}(y,\hat Q) &\leq \mathrm{dist}(y,Q) \leq
(K-1)\mathrm{diam}(Q)\leq (K-1)c_0^{-1} \ell(Q) = (K-1)c_0^{-1}
2^{-m} \ell(\hat Q)\\&\overset{\eqref{eq:MeasCube}}{\leq}
(K-1)C^{2/s}c_0^{-2} 2^{-m}\mathrm{diam}(\hat Q),
\end{align*}
which shows that $m$ can be made large enough, depending only on
$s,C$ (also via $c_0$), $K_0$ and $K$, such that also the third
condition in \eqref{eq:AncestorCond} is satisfied. Once $m$ is
fixed, $\hat Q$ is uniquely determined according to condition (3)
in Theorem and Definition \ref{dl:dyadic}.

Since the coefficient function $h$ has the monotonicity property
 \eqref{eq:h_monot}, and since $K Q \subset K_0 \hat Q$
and $\mathrm{diam}(K_0 \hat Q)^s
\lesssim_{s,C,K_0,K}\mathcal{H}^s(Q)$ by \eqref{eq:AncestorCond}
and \eqref{eq:KQ_in_Ball}, it follows that
\begin{equation}\label{eq:ConditionGamma}
h(K Q) \lesssim_{s,C,K,K_0}h(K_0 \hat Q).
\end{equation}
From \eqref{eq:AncestorCond} and \eqref{eq:ConditionGamma}, we
then easily deduce for every $Q_0 \in \Delta$ that
\begin{align*}
\sum_{Q\in \Delta_{Q_0}}h(KQ)^p \mu(Q)\lesssim \sum_{Q\in
\Delta_{Q_0}}h(K_0 \hat Q)^p \mu(Q)\leq \sum_{Q\in
\Delta_{Q_0}}h(K_0 \hat Q)^p \mu(\hat Q)\lesssim \sum_{Q\in
\Delta_{\hat Q_0}}h(K_0 Q)^p \mu( Q),
\end{align*}
where the implicit constants may depend on all the parameters $s$,
$C$, $K$, $K_0$, and $p$. In the last inequality  we used the fact
that for every $Q'\in \Delta$ there are only $\lesssim_{m,s,C}$
many $Q\in \Delta_{Q'}$ such that $\ell(Q')= 2^m \ell(Q)$ (see  \eqref{eq:number of descendants}).

If $E$ is bounded, then there still exists a constant
$m=m(K_0,K,s,C)$ such that \eqref{eq:AncestorCond} holds, but only
for $Q\in \Delta_j$ with $j\geq n+m$ (recall that we have defined
$\Delta_j$ for $j\geq n$, where $2^{-n}\leq \mathrm{diam}(E)
<2^{-n+1}$). If $Q_0 \in \Delta_j$ for some $j< n+m$, then we can
conclude exactly as we did in the case of unbounded sets $E$. If,
on the other hand, $Q_0 \in \Delta_j$ for some $j>n+m$, then we
define $\hat Q_0$ to be the unique cube in $\Delta_n$ with $\hat
Q_0 \supseteq Q_0$. Then we split the relevant sum as follows
\begin{equation}\label{eq:GLsplit}
\sum_{Q\in \Delta_{Q_0}} h(KQ)^p\, \mu(Q)= \sum_{Q\in
\Delta_{Q_0}\cap [\cup_{j\geq n+m}\Delta_j]} h(KQ)^p\, \mu(Q) +
\sum_{Q\in \Delta_{Q_0}\cap [\cup_{j< n+m}\Delta_j]} h(KQ)^p
\mu(Q).
\end{equation}
The first sum on the righthand side can be bounded by $\sum_{Q\in
\Delta_{\hat Q_0}} h(K_0Q)^p\, \mu(Q)$ using the same computations
as in the case of unbounded sets $E$, observing that $Q\in
\Delta_{Q_0}\cap [\cup_{j\geq n+m}\Delta_j]$ implies $\hat Q
\subset \hat Q_0$ with the new definition of $\hat Q$. It remains
to control the second sum on the right-hand side of
\eqref{eq:GLsplit}. Since $|h|$ is assumed to take values in $[0,1]$, we can simply bound this by $
\sum_{Q\in \Delta_{Q_0}\cap [\cup_{j< n+m}\Delta_j]}\mu(Q)$ and
use that the sum runs over at most $m$ generations. Coupled with
\eqref{eq:sum of parts} this yields the claim.

To show that $\mathrm{GLem}(h,p)$ is independent of the dyadic system, fix two dyadic systems $\Delta, \tilde \Delta$ and assume that $\mathrm{GLem}(h,p)$ holds for $\tilde \Delta.$ For all $Q\in \Delta_j$ there exists $\tilde Q\in \tilde \Delta_j$ such that $2Q\subset k\tilde Q$, where $k>1$ is a constant depending only on $s$ and $C.$ Fix any $Q_0 \in \Delta$ and note that for all $Q \in \Delta_{Q_0}$ it holds $\tilde Q\subset k_0Q_0\subset k_0k\tilde Q_0$ with $k_0>1$  a constant depending only on $s$ and $C.$
Hence using \eqref{eq:h_monot} we obtain that
\begin{equation}\label{eq:glem indep}
    \sum_{Q\in \Delta_{Q_0}} h(2Q)^p\, \mu(Q) \lesssim_{s,C} \sum_{Q\in \Delta_{Q_0}} h(k\tilde Q)^p {\mu(Q)}\lesssim_{s,C}  \sum_{Q'\in \tilde \Delta,\, Q'\subset k_0k\tilde Q_0} h(kQ')^p{\mu(Q')},
\end{equation}
where similarly as above we used that for all $Q'\in \Delta$ there exists $\lesssim_{s,C}$ cubes $Q\in \Delta$ such that $\tilde Q=Q'.$ From inequality \eqref{eq:glem indep} applying \eqref{eq:cube patch} and then \eqref{eq:GL_K}  we easily obtain that $\mathrm{GLem}(h,p)$ holds for $\Delta.$
\end{proof}

\begin{remark}\label{rmk:coeff}
    The coefficients of both Example \ref{ex:Betas} and Example \ref{ex:MetricBetas} satisfy assumption \eqref{eq:h_monot} of the above lemma. To see this let $E\in \reg_s(C)$ and $A,B\in \mathcal D_s(E)$ satisfy $A\subset B$ and $\diam(B)^s\le N\mu(A)$ for some constant $N\ge 1.$ Then by the $s$-regularity of $E$ we have
    \[
    \begin{split}
           &\frac{1}{\mu(A)}\le \frac{N}{\diam(B)^s}\le \frac{CN}{\mu(B)}\\
           &\diam(A)\ge C^{-\frac1s}\mu(A)^{\frac1s}\ge  (NC)^{-\frac1s} \diam(B)
    \end{split}.
    \]
  This clearly shows the validity of \eqref{eq:h_monot} for $h\in \{\beta_{q,\mathcal{A}},\kappa\}$.
\end{remark}

\subsection{New coefficients to measure flatness in metric spaces}\label{s:NewCoeff}

We introduce new coefficients to measure flatness of a set in
 a metric space. Here, \emph{flatness} means roughly speaking the
existence of approximate isometric embeddings of the set into
$\mathbb{R}^k$, in an $L^q$-sense. The natural measures to use in
this definition are $k$-dimensional Hausdorff measures, for
integer-valued $k$.

\begin{definition}\label{d:alpha}
For  $k\in \mathbb{N}$, a closed set $E \subset \X$ of locally
finite $\mathcal{H}^k$-measure, \textcolor{black}{and}
$\mu:=\mathcal{H}^k\lfloor_E$, we define
\begin{equation}\label{eq:nb}
\nb_{q,k}(S):=\inf_{\|\cdot\|\text{ norm on
}\mathbb{R}^k}\inf_{f:S\to \mathbb{R}^k}\left(
\frac{1}{\mu(S)^{2}}\int_{S}\int_S\left[\frac{\left|\sfd(x,y)-\|f(x)-f(y)\|\right|}{\mathrm{diam}(S)}\right]^q\,d\mu(x)\,d\mu(y)\right)^{1/q},
\end{equation}
for $S\in \mathcal{D}_s(E)$, {where the functions $f$ in the second infimum are assumed to be Borel.}
Moreover we define the number $\nb_{q,k,{\sf Eucl}}(S)$ by considering in the infimum \eqref{eq:nb} only the Euclidean norm $\|\cdot\|_{\sf Eucl}$ {(which we often denote simply by $|\cdot|$)}.
\end{definition}
 The numbers $\nb_{q,k}$ can be interpreted as an \textit{$L^q$-unilateral  version of the Gromov-Hausdorff $\beta$-numbers} from \cite[Definition 3.1.3]{2023arXiv230612933B}

The function $\nb_{q,k}$ associated to a $k$-regular set $E$
clearly has the monotonicity property required in
 Lemma \ref{l:coeff} (see Remark \ref{rmk:coeff}) and hence, again, for the purpose of this paper, the  constant ``$2$''  in
 the definition of
``$E\in \mathrm{GLem}(\nb_{q,k},p)$'' could be replaced by any
constant $K_0>1$ and the validity of $\mathrm{GLem}(\nb_{q,k},p)$ is independent of the choice of dyadic system.

\begin{remark}
\textcolor{black}{In a companion paper, \cite{CarlesonPart2}, we
consider a variant of the $\nb$-numbers that is more suitable for
comparison with the $\beta$-numbers from Example \ref{ex:Betas} in
Euclidean spaces. In particular, we define coefficients
$\nb_{1,\mathcal{V}_k}$ using orthogonal projections onto
$k$-dimensional affine planes and we prove that $k$-regular set
$E\subset\mathbb{R}^n$ is uniformly $k$-rectifiable if and only if
$E\in \mathrm{GLem}(\nb_{1,\mathcal{V}_k},1)$.}
\textcolor{black}{We refer to  \cite{CarlesonPart2} for the
definition of $\nb_{1,\mathcal{V}_k}$, but for illustrative
purposes we mention that for subsets of Euclidean spaces, already
the numbers $\iota_{q,k,\sf Eucl}$ and $\iota_{q,k}$ are related
to affine $k$-planes, as Propositions  \ref{prop:converse
inequalities} and \ref{prop:zero banach} below show (recall \eqref{eq:beta_intro} or Example \ref{ex:Betas} for the definition of the numbers $\beta_{q,\mathcal V_k}$). }
\end{remark}

\begin{proposition}\label{prop:converse inequalities}
     Let $E\in \reg_k(C)$ be a $k$-regular subset of $\mathbb R^n$, where $k\in \mathbb N$ and $k<n$ and let $\Delta$ be a system of dyadic cubes for $E.$ Then for all $q\in [1,\infty)$ and all $Q\in \Delta$ it holds
     \begin{equation}\label{eq:converse inequality}
         \beta_{q,\mathcal V_k}^2(2Q)\le \beta_{2q,\mathcal V_k}^2(2Q)\le  \tilde C {\nb_{q,k,{\sf Eucl}}(2Q)},
     \end{equation}
     where $\tilde C$ is a constant depending only on $k,p$ and $C$.
\end{proposition}

\begin{proposition}\label{prop:zero banach}
    Let $E\in \reg_k(C)$ be a $k$-regular subset of $\mathbb R^n$, where $k\in \mathbb N$ and $k\le n$ and let $\Delta$ be a system of dyadic cubes for $E.$ Suppose that for some $Q\in \Delta$ and $q\in [1,\infty)$  it holds
\[
\nb_{q,k}(Q)=0.
\]
Then $\nb_{q,k,{\sf Eucl}}(Q)=\beta_{q,\mathcal V_k}(Q)=0$. In particular up to a $\mathcal{H}^k$-zero measure set, $Q$ is contained in a $k$-dimensional plane.
\end{proposition}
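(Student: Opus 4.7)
The plan is to derive the proposition from the rigidity Lemma \ref{lem:isometry positive measure} referenced in the introduction. The strategy is first to upgrade the hypothesis $\nb_{q,k}(Q)=0$ to an honest isometric embedding of a $\mu$-conull Borel subset of $Q$ into $(\mathbb{R}^k,\|\cdot\|)$ for some norm $\|\cdot\|$, and then to apply the rigidity lemma, which forces $Q$ to be essentially contained in an affine $k$-plane of $\mathbb{R}^n$; both conclusions of the proposition then follow with little additional work.

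For the extraction I rely on the invariance of the defining integral in \eqref{eq:nb} under the substitution $(\|\cdot\|,f)\mapsto(\|A^{-1}(\cdot)\|, A\circ f)$ for any invertible linear $A\colon\mathbb{R}^k\to\mathbb{R}^k$. Given a minimizing sequence $(\|\cdot\|_j,f_j)$, this freedom lets me normalize via John's theorem so that each $\|\cdot\|_j$ is uniformly equivalent to the Euclidean norm $|\cdot|$, with comparability constants independent of $j$. Passing to a subsequence, $\|\cdot\|_j$ converges to some norm $\|\cdot\|$ on $\mathbb{R}^k$. Centering the maps so that $\int_Q f_j\,d\mu=0$ and using that $\|f_j(x)-f_j(y)\|_j$ is close in $L^q$ to $\sfd(x,y)\le\mathrm{diam}(Q)$ yields a uniform $L^q$-bound on $|f_j|$ via the equivalence of norms. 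Weak $L^q$-compactness together with passage to an a.e.\ convergent subsequence then produces a Borel limit $f\colon Q\to\mathbb{R}^k$ satisfying $\sfd(x,y)=\|f(x)-f(y)\|$ for $\mu\otimes\mu$-a.e.\ $(x,y)\in Q\times Q$. Restricting to a $\mu$-conull Borel set $Q'\subset Q$ on which this identity holds delivers the required isometric embedding.

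With this in hand, the map $f$ is bi-Lipschitz between $(Q',|\cdot|)$ and $(f(Q'),|\cdot|)$ (since $\|\cdot\|\sim|\cdot|$), so $f(Q')\subset\mathbb{R}^k$ has positive Lebesgue measure. Applying Lemma \ref{lem:isometry positive measure} to the isometric embedding $f^{-1}\colon f(Q')\to\mathbb{R}^n$ produces an affine $k$-plane $V\subset\mathbb{R}^n$ containing $Q'$ up to a null set. This gives $\mathrm{dist}(y,V)=0$ for $\mu$-a.e.\ $y\in Q$, whence $\beta_{q,\mathcal{V}_k}(Q)=0$. For the Euclidean $\nb$ it is not necessary to identify $\|\cdot\|$: one simply fixes any affine isometry $\psi\colon V\to(\mathbb{R}^k,|\cdot|)$ and extends it to a Borel map $g\colon Q\to\mathbb{R}^k$ by setting $g\equiv 0$ off $Q'$; then $|g(x)-g(y)|=\sfd(x,y)$ on $Q'\times Q'$, so because $\mu(Q\setminus Q')=0$ this competitor forces $\nb_{q,k,{\sf Eucl}}(Q)=0$. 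The final assertion that $Q$ is $\mathcal{H}^k$-essentially contained in a $k$-plane is just $Q'\subset V$.

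The main technical obstacle is the extraction step: turning a vanishing infimum into an infimum attained on a positive-measure set. Linear invariance combined with John normalization handles the compactness of the norm variable, and weak $L^q$-compactness handles the maps, but Borel measurability of the limit and promoting an integral bound to a pointwise almost-everywhere identity require care. Once this is settled, the rest is a clean application of Lemma \ref{lem:isometry positive measure}.
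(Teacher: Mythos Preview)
Your overall strategy---extract a limiting pair $(\|\cdot\|,f)$ realizing the infimum and then feed $f^{-1}$ into Lemma \ref{lem:isometry positive measure}---is natural, but the extraction step as written does not go through. An $L^q$-bounded sequence $(f_j)$ need not have any a.e.\ convergent subsequence (think of Rademacher functions), and weak $L^q$-convergence of $f_j$ cannot be pushed through the nonlinear expression $\|f_j(x)-f_j(y)\|_j$. So from ``$\|f_j(x)-f_j(y)\|_j\to\sfd(x,y)$ in $L^q(Q\times Q)$'' together with ``$f_j\rightharpoonup f$ weakly in $L^q(Q)$'' you cannot conclude $\|f(x)-f(y)\|=\sfd(x,y)$ a.e. A second, related issue: even if you had such an $f$, the claim that one can restrict to a \emph{conull} $Q'\subset Q$ on which the identity holds for \emph{all} pairs is not automatic (an a.e.\ identity on $Q\times Q$ does not in general yield a conull $Q'$ with the identity on $Q'\times Q'$). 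Finally, Lemma \ref{lem:isometry positive measure} only asserts the \emph{existence} of a linear isometry $(\mathbb{R}^k,\|\cdot\|)\to(\mathbb{R}^n,|\cdot|)$; it does not say that your particular map $f^{-1}$ extends affinely, so the jump to ``$Q'\subset V$'' needs an extra (short) argument once you know both norms are Euclidean.

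The paper sidesteps all of this by never constructing a limit map $f$ on $Q$. Instead it works on the product: after Banach--Mazur compactness of the norms and Egorov's theorem, one obtains a compact $K\subset Q\times Q$ of positive $\mathcal H^k\!\otimes\!\mathcal H^k$-measure on which the maps $F_i(x,y)\coloneqq(T_if_i(x),T_if_i(y))$ are uniformly $\delta_i$-isometries into $(\mathbb{R}^k\times\mathbb{R}^k,\|\cdot\|_{\mathrm{prod}})$, and a generalized Arzel\`a--Ascoli argument produces a genuine isometry $F\colon K\to(\mathbb{R}^{2k},\|\cdot\|_{\mathrm{prod}})$. Lemma \ref{lem:isometry positive measure} applied to $F^{-1}$ then yields a linear isometry $(\mathbb{R}^k,\|\cdot\|)\hookrightarrow(\mathbb{R}^n,|\cdot|)$, i.e.\ $\|\cdot\|$ is Euclidean up to a linear map $\tilde T$. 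The conclusion $\nb_{q,k,{\sf Eucl}}(Q)=0$ is then obtained not by exhibiting a minimizer but by showing that the competitors $\tilde f_i\coloneqq\tilde T\circ T_i\circ f_i$ (with the Euclidean norm) drive the integral to zero; $\beta_{q,\mathcal V_k}(Q)=0$ follows from Proposition \ref{prop:converse inequalities}. If you want to rescue your route, the cleanest fix is to first run the paper's product-space/Egorov/Arzel\`a--Ascoli argument on a positive-measure set to learn that $\|\cdot\|$ is Euclidean, and only then attempt a trilateration argument to pin down the $f_j$ pointwise; but at that stage the paper's sequence argument is already shorter.
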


\textcolor{black}{Propositions  \ref{prop:converse inequalities}
and  \ref{prop:zero banach} are  proven in \cite[Section
3.1]{CarlesonPart2}.}

\section{Sufficient conditions for the existence of regular covering
curves}\label{s:SuffRegCoverCurve}

This section aims at clarifying several technical points regarding
the question when a $1$-regular set $E$ in a complete, doubling
and quasiconvex metric space is contained in a regular curve
$\Gamma_0$. This discussion is partially motivated by an
application in \cite[Theorem 1.3]{MR3678492} about $1$-dimensional
singular integral operators in the Heisenberg group, and it will
be employed also in Section \ref{s:Char1UR} to state and
characterize a notion of \emph{uniform $1$-rectifiability} in a
large class of metric spaces.

In Section \ref{s:ConstrCover} we explain
how the existence of the covering curve $\Gamma_0$ for a set $E$ can be
derived from certain quantitative \emph{local} information on $E$. We will first prove the
\emph{bounded} case in Theorem \ref{t:FromConnToReg} and Corollary \ref{c:FromLipToReg} and then in Section \ref{s:unbounded} we
present a covering lemma that allows to extend this result to the
\emph{unbounded} case.
Finally, in Section \ref{s:ApplMenger},
these results are applied to construct a covering by a 1-regular
connected set, based on  integral bounds on the Menger curvature.
This proves and generalizes to quasiconvex metric spaces a result
first stated in \cite{MR2342818}. Finally, we review an
application of this result in the Heisenberg group.

\subsection{Construction of $1$-regular covering continua}\label{s:ConstrCover}

In this section, we consider conditions for a bounded set to be
contained in a $1$-regular \emph{curve}. Actually it will be sufficient to show that the set is
contained in a \emph{closed connected $1$-regular set of finite length}.
Indeed,
if $(\X,\sfd)$ is a
complete metric space, and $\Gamma\subset \X$ a closed connected
subset of finite $\mathcal{H}^1$ measure, then $\Gamma$ is
automatically a compact Lipschitz \emph{curve} by \cite[Lemma
2.8]{2021arXiv210906753B}, see also \cite[Lemma
2.2-2.3]{MR2337487}. Moreover, if we assume in addition that
$\Gamma$ is a $1$-regular set, then under the previous
assumptions, it will be automatically a \emph{$1$-regular curve}
in the sense of \cite[1.1]{MR2337487} by  \cite[Lemma
2.3]{MR2337487}.

\begin{thm}[From local covering by continua to global 1-regular covering]\label{t:FromConnToReg}
Let $(\X,\sfd)$ be a complete, doubling, and quasiconvex metric
space. Assume that $K\subset \X$ is a
bounded
set with the following property. There exists a constant $C>0$
such that, for every $x\in K$ and $0<r\leq \mathrm{diam}(K)$,
there is a closed connected set $\Gamma_{x,r}\subset \X$ with
$\Gamma_{x,r}\supseteq K\cap B_r(x)$ and
$\mathcal{H}^1(\Gamma_{x,r})\leq C r$. Then there exists a
(closed) connected set $\Gamma_0\in \mathrm{Reg}_1(C_0)$, with
$C_0$ depending only on the doubling and quasiconvexity constants
of  $(\X,\sfd)$ and on $C$, such that
\begin{displaymath} \Gamma_0 \supseteq K\quad\text{and}\quad
\mathcal{H}^1(\Gamma_0) \leq C \mathrm{diam}(K). \end{displaymath}
Moreover, the set $\Gamma_0$ may be chosen such that
\begin{displaymath}
\mathcal{H}^1(\Gamma_0)= \min \{\mathcal{H}^1(\Gamma):\, \Gamma
\text{ closed and connected, and }\Gamma \supseteq K\}.
\end{displaymath}
\end{thm}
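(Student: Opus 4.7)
The plan is to take $\Gamma_0$ as a minimizer of $\mathcal{H}^1$ among closed connected sets containing $K$, and to deduce its $1$-regularity from a comparison argument driven by the hypothesized covering continua. An admissible set of finite length exists: a maximal $(\diam K)/2$-separated subset $\{x_1,\dots,x_N\}\subset K$ has cardinality bounded in terms of the doubling constant $D$, the union $\bigcup_{i}\Gamma_{x_i,(\diam K)/2}$ covers $K$, and the $N$ pieces can be joined by $N-1$ quasiconvex arcs of length $\le L\diam K$ each to form a closed connected $\Gamma^{\mathrm{init}}\supseteq K$ with $\mathcal{H}^1(\Gamma^{\mathrm{init}})\le C'\diam K$, for some $C'=C'(C,L,D)$. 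Any competitor $\Gamma$ with $\mathcal{H}^1(\Gamma)\le 2C'\diam K$ has diameter $\le \mathcal{H}^1(\Gamma)$ and so lies in a fixed bounded, and by completeness and doubling compact, ball around any chosen point of $K$. Blaschke selection extracts a Hausdorff-convergent subsequence of a minimizing sequence; the limit $\Gamma_0$ is closed, connected (Hausdorff limit of connected sets in a compact ambient), and still contains $K$. Go\l\k{a}b's lower semicontinuity of $\mathcal{H}^1$ on closed connected sets, valid in complete metric spaces, then yields that $\Gamma_0$ attains the infimum.

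Lower $1$-regularity is immediate: $\Gamma_0$ is a closed connected set of finite $\mathcal{H}^1$-measure, hence a compact Lipschitz curve, and for $x\in\Gamma_0$ and $r\le \diam\Gamma_0$ connectedness furnishes a rectifiable arc in $\Gamma_0$ from $x$ to a point at distance $\ge r/2$, contributing $\ge r/2$ to $\mathcal{H}^1(\Gamma_0\cap B_r(x))$.

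The main work is the upper bound. Set $\eta(r)=\mathcal{H}^1(\Gamma_0\cap B_r(x))$ and suppose $\eta(r)>C_0 r$ for some $x\in\Gamma_0$ and $r\le(\diam\Gamma_0)/4$, with $C_0$ to be chosen large. Eilenberg's inequality
\[
\int_r^{2r}\mathcal{H}^0(\Gamma_0\cap\partial B_R(x))\,dR\le\eta(2r)-\eta(r)
\]
produces $R\in[r,2r]$ with $m:=\mathcal{H}^0(\Gamma_0\cap\partial B_R(x))\le (\eta(2r)-\eta(r))/r$; let $q_1,\dots,q_m$ denote the exit points. I construct a competitor $\tilde\Gamma$ by removing $\Gamma_0\cap\bar B_R(x)$ and inserting, first, a covering continuum $\Gamma_{x',4r}$ for some $x'\in K\cap B_{2r}(x)$ (which covers $K\cap B_R(x)$ and has length $\le 4Cr$) whenever the intersection is non-empty, and second, \emph{clustered} reconnection arcs: choose a maximal $(R/M)$-separated subset $\{p_1,\dots,p_{N_0}\}\subset\{q_1,\dots,q_m\}$, with $N_0\lesssim_{D,M} 1$ by doubling, join each $q_i$ to its nearest $p_j$ by a quasiconvex arc of length $\le LR/M$, and join the $p_j$'s among themselves and to $\Gamma_{x',4r}$ along a spanning tree of total length $\le 2LN_0 R$. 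The competitor is closed, connected, and contains $K$, with
\[
\mathcal{H}^1(\tilde\Gamma)\le \mathcal{H}^1(\Gamma_0)-\eta(R)+4Cr+2LN_0 R+\frac{mLR}{M}.
\]
Minimality of $\Gamma_0$, together with $\eta(R)\ge \eta(r)$, $R\le 2r$, and the bound on $m$, yields after rearrangement the recursion
\[
\eta(r)\le A_M\, r+\beta\,\eta(2r),\qquad \beta=\frac{2L/M}{1+2L/M},
\]
with $A_M=A_M(C,L,D,M)$. Choosing $M>2L$ forces $2\beta<1$; iterating and using the crude bound $\eta(\rho)\le\mathcal{H}^1(\Gamma_0)\le C'\diam K$ to kill the tail at large scales produces $\eta(r)\le \tilde C_0 r$ for a constant $\tilde C_0=\tilde C_0(C,L,D)$, contradicting the initial assumption once $C_0>\tilde C_0$. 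For $r>(\diam\Gamma_0)/4$ the bound follows at once from $\mathcal{H}^1(\Gamma_0)\le C'\diam K$.

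The delicate point is the upper bound. The naive comparison, in which each exit point $q_i$ is connected to the insertion by its own quasiconvex arc of length $\le LR$, produces a recursion with coefficient $\ge 2L$ that fails to close for general quasiconvexity constant $L$. The doubling-based clustering of the $q_i$ is what decouples the individual cost per exit point ($LR/M$, small when $M\gg L$) from the number of ``long'' inter-cluster edges (absolute in $D$ and $M$), letting the recursion close uniformly in $L$. Achieving the sharper bound $\mathcal{H}^1(\Gamma_0)\le C\diam K$ with the \emph{same} constant $C$ as in the hypothesis presumably requires an additional observation at the top scale, e.g.\ extracting a competitor of length $\le C\diam K$ by passing to a Hausdorff limit of $\Gamma_{x_0,r}$ as $r\nearrow\diam K$, against which the minimizer can only be shorter.
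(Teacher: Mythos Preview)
Your argument is correct and follows the paper's overall plan---take a length-minimizer via Blaschke/Go\l\k{a}b, get lower regularity from connectedness, and obtain upper regularity from a comparison against a competitor built from the hypothesized local continua plus reconnection arcs. The substantive difference lies in how upper regularity is extracted. The paper does a one-shot contradiction: it first passes to the \emph{largest} bad radius $r_0$, so that upper regularity with the putative constant $C_0$ already holds at $2r_0$; Eilenberg then gives at most $C_0$ exit points on some sphere $\partial B_\rho$, $\rho\in[r_0,2r_0]$, and these are reconnected via a standalone lemma (Proposition~\ref{p:Conn}) which joins $N$ points in a ball of radius $\rho$ by a continuum of length $\lesssim \rho\,N^{(2D-1)/2D}$. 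The sublinear exponent is what kills the reconnection cost for large $C_0$ in a single step. Your approach replaces this by a recursion: you do only one level of clustering (a maximal $R/M$-net among the exit points), get the inequality $\eta(r)\le A_M r+\beta\,\eta(2r)$ with $2\beta<1$, and iterate across scales. In effect you redistribute the multi-scale content of Proposition~\ref{p:Conn} into the iteration. The paper's route is more modular (the connecting-points lemma is of independent use) and avoids tracking a recursion; your route is more self-contained and sidesteps the sharp exponent. Two small cosmetic fixes: remove the \emph{open} ball so the remainder $\Gamma_0\setminus B_R(x)$ is closed and already contains the exit points, and note that the recursion $\eta(r)\le A_M r+\beta\,\eta(2r)$ holds unconditionally at admissible scales, so the ``suppose $\eta(r)>C_0 r$'' framing is unnecessary. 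Your closing remark about the sharp bound $\mathcal{H}^1(\Gamma_0)\le C\,\mathrm{diam}(K)$ is fair; the paper simply applies the hypothesis at $r=\mathrm{diam}(K)$, tacitly treating $K\cap B_{\mathrm{diam}(K)}(x)$ as all of $K$.
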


 From Theorem \ref{t:FromConnToReg} above and the extension property of Lipschitz maps, we immediately infer the following result.
\begin{cor}[From local covering by Lipschitz images to global 1-regular covering]\label{c:FromLipToReg}
Let $(\X,\sfd)$ be a complete, doubling, and quasiconvex metric
space. Assume that $K\subset \X$ is a
bounded
set with the following property. There exists a constant $C>0$
such that, for every $x\in K$ and $0<r\leq \mathrm{diam}(K)$,
there is a set $A_{x,r}\subset [0,1]$ and a surjective
$Cr$-Lipschitz map $f_{x,r}\colon A_{x,r} \to K\cap B_r(x)$. Then
there exists a (closed) connected set $\Gamma_0\in
\mathrm{Reg}_1(C_0)$, with $C_0$ depending only on   the doubling
and quasiconvexity constants of  $(\X,\sfd)$ and on $C$, such that
\begin{displaymath} \Gamma_0 \supseteq K\quad\text{and}\quad
\mathcal{H}^1(\Gamma_0) \leq C' \mathrm{diam}(K),
\end{displaymath}
where $C'$ depends only on $C$ and the quasiconvexity constant of
$(\X,\sfd)$.
 Moreover, the set $\Gamma_0$ may be chosen such
that
\begin{displaymath}
\mathcal{H}^1(\Gamma_0)= \min \{\mathcal{H}^1(\Gamma):\, \Gamma
\text{ closed and connected, and }\Gamma \supseteq K\}.
\end{displaymath}
\end{cor}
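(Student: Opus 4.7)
The plan is to reduce the statement to Theorem \ref{t:FromConnToReg} by upgrading each local covering by a Lipschitz image of a subset of $[0,1]$ to a covering by the Lipschitz image of the whole interval $[0,1]$; the latter image will automatically be a compact connected set with controlled $\mathcal{H}^1$-measure.

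First, for each pair $(x,r)$ with $x \in K$ and $0 < r \leq \mathrm{diam}(K)$, I would extend $f_{x,r} \colon A_{x,r} \to K \cap B_r(x)$ to a Lipschitz map $\tilde f_{x,r} \colon [0,1] \to \X$. Since $\X$ is complete, $f_{x,r}$ admits a unique extension by uniform continuity to the closure $\overline{A_{x,r}}$, with the same Lipschitz constant $Cr$. The open set $[0,1] \setminus \overline{A_{x,r}}$ decomposes into an at most countable union of pairwise disjoint open intervals. On each bounded gap $(a,b)$ with $a,b \in \overline{A_{x,r}}$, I would invoke the quasiconvexity of $(\X,\sfd)$ to choose a curve joining $f_{x,r}(a)$ to $f_{x,r}(b)$ of length at most $L \cdot \sfd(f_{x,r}(a),f_{x,r}(b)) \leq LCr(b-a)$, and reparametrize it linearly over $[a,b]$ at constant speed. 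On the possible leading or trailing gap (where $\overline{A_{x,r}}$ does not touch $0$ or $1$), I would simply extend by the corresponding endpoint value. A routine triangle inequality, comparing two parameters that lie either both in $\overline{A_{x,r}}$, both in gap intervals, or one in each, shows that the resulting map $\tilde f_{x,r}$ is $LCr$-Lipschitz on all of $[0,1]$.

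Next, I would set $\Gamma_{x,r} := \tilde f_{x,r}([0,1])$. Being a continuous image of $[0,1]$, the set $\Gamma_{x,r}$ is compact and connected; it contains $f_{x,r}(A_{x,r}) = K \cap B_r(x)$; and since Hausdorff measure does not expand under Lipschitz maps,
\begin{displaymath}
\mathcal{H}^1(\Gamma_{x,r}) \leq \mathrm{Lip}(\tilde f_{x,r}) \cdot \mathcal{H}^1([0,1]) \leq LCr.
\end{displaymath}
Hence the family $\{\Gamma_{x,r}\}$ verifies the hypothesis of Theorem \ref{t:FromConnToReg} with constant $LC$ in place of $C$, and that theorem directly yields the desired closed connected set $\Gamma_0 \in \mathrm{Reg}_1(C_0)$ with $\Gamma_0 \supseteq K$, $\mathcal{H}^1(\Gamma_0) \leq LC \cdot \mathrm{diam}(K)$, and the stated minimality property. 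The constants $C_0$ and $C' = LC$ depend only on $C$, on the quasiconvexity constant $L$, and on the doubling constant of $(\X,\sfd)$, as claimed.

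The only step with any substance is the Lipschitz extension $f_{x,r} \mapsto \tilde f_{x,r}$, which is where quasiconvexity enters essentially; once this is in place the reduction to Theorem \ref{t:FromConnToReg} is immediate.
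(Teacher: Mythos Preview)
Your proposal is correct and follows essentially the same strategy as the paper: extend each $f_{x,r}$ to a Lipschitz map on all of $[0,1]$ using quasiconvexity, take $\Gamma_{x,r}$ to be the image, and invoke Theorem \ref{t:FromConnToReg}. The only difference is cosmetic: the paper appeals to the general Lipschitz extension property of the pair $(\mathbb{R},\X)$ for complete quasiconvex targets (citing \cite{MR4104279}), whereas you unpack this by hand---extend to $\overline{A_{x,r}}$ by completeness, then bridge each gap $(a,b)$ with an arclength-reparametrized quasiconvex arc.
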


\begin{proof}[Proof of Corollary \ref{c:FromLipToReg} using Theorem
\ref{t:FromConnToReg}] Let $(\X,\sfd)$ and $K$ be as in the
statement of Corollary \ref{c:FromLipToReg}. Since $(\X,\sfd)$ is
complete and quasiconvex, the pair $(\mathbb{R},\X)$ has the
Lipschitz extension property, see for instance \cite{MR4104279}.
In particular, there exists a constant $C'\geq 1$, depending only
on the given  $C>0$ and the quasiconvexity constant of
$(\X,\sfd)$, such that every $Cr$-Lipschitz map $f_{x,r}\colon
A_{x,r} \to K\cap B_r(x)$ as in the assumptions of the corollary
can be extended to a $C'r$-Lipschitz map $F_{x,r}:([0,1],|\cdot|)
\to (\X,\sfd)$ with ${F_{x,r}}|_{A_{x,r}} = f_{x,r}$. Then $K$
satisfies the assumptions of Theorem \ref{t:FromConnToReg} with $
\Gamma_{x,r}= F_{x,r}([0,1])$ and constant $C'$. Thus Corollary
\ref{c:FromLipToReg} follows from Theorem \ref{t:FromConnToReg}.
\end{proof}

Actually Theorem \ref{t:FromConnToReg}, and hence also Corollary
\ref{c:FromLipToReg},   hold without  the boundedness assumption
on $K$. We will prove this in the next Subsection
\ref{s:unbounded} (see in particular Corollary \ref{c:unbounded}),
since it will be first necessary to prove independently the
bounded case.

The proof of  Theorem \ref{t:FromConnToReg} is based on an idea
which we learned from Tuomas Orponen  in the case
$(\X,\sfd)=(\mathbb{R}^2,|\cdot|)$, see \cite[Exercise
1.6]{OLect}. Similar ideas have been used in \cite[p.197
ff]{MR2129693}. The strategy is to show that there exists a closed
connected set $\Gamma_0$ \emph{of minimal $\mathcal{H}^1$ measure}
containing $K$, and then to prove that this $\Gamma_0$ must in
fact be $1$-regular. Before explaining the details, we list the
key ingredients needed to run the argument in metric spaces. We
will apply the following result, which follows from versions of
Blaschke's theorem and  Go\l\c{a}b's theorem in metric spaces. For
comments and a  proof of  Go\l\c{a}b's theorem in this setting,
see also \cite[p.840, p.846]{MR3018174}.

\begin{thm}[Existence result; Theorem 4.4.20 in
\cite{MR2039660}]\label{t:exist} Let $(\X,\sfd)$ be a proper
metric space. Suppose that $K\subset \X$ is nonempty, and assume
that there exists a closed connected set $\Gamma$ in $\X$ such
that $\mathcal{H}^1(\Gamma)<\infty$ and $\Gamma\supseteq K$. Then
the minimum problem
\begin{displaymath}
\min \left\{\mathcal{H}^1(\Gamma):\,\Gamma\text{ closed and
connected, and }\Gamma \supseteq K \right\}
\end{displaymath}
has a solution.
\end{thm}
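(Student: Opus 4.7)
The plan is to apply the direct method of the calculus of variations, with Blaschke-type compactness and Go\l\c{a}b-type lower semicontinuity as the two essential ingredients. Let
\[
\ell := \inf\bigl\{\mathcal{H}^1(\Gamma) : \Gamma\subset\X \text{ closed, connected, and } \Gamma\supseteq K\bigr\},
\]
which is finite by hypothesis. Fix a minimizing sequence $(\Gamma_n)_n$ with $\mathcal{H}^1(\Gamma_n)\to\ell$ and pick any $x_0\in K$. For every $z\in\Gamma_n$ the map $w\mapsto\sfd(x_0,w)$ is $1$-Lipschitz, so its image in $[0,\infty)$ is connected and contains both $0$ and $\sfd(x_0,z)$; hence $\sfd(x_0,z)\leq\mathcal{H}^1(\Gamma_n)$. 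In particular, for $n$ large, every $\Gamma_n$ lies in a fixed closed ball $B_0$ of radius $\ell+1$ around $x_0$, which is compact by properness of $(\X,\sfd)$.

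First I would invoke the metric-space version of Blaschke's selection theorem (the family of closed subsets of a compact metric space is compact under the Hausdorff distance) to extract a subsequence $\Gamma_{n_k}$ converging in Hausdorff distance to some nonempty closed set $\Gamma_0\subset B_0$. Two stability properties are essentially automatic. Each $x\in K$ lies in every $\Gamma_{n_k}$, so by Hausdorff convergence $x\in\Gamma_0$, yielding $\Gamma_0\supseteq K$. Connectedness of $\Gamma_0$ follows from a standard separation argument: if $\Gamma_0=A\sqcup B$ with $A,B$ nonempty closed sets (necessarily at positive distance, by compactness of $\Gamma_0$), then for $k$ large the set $\Gamma_{n_k}$ would split into disjoint nonempty pieces near $A$ and near $B$, contradicting its connectedness.

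The only genuinely nontrivial step is lower semicontinuity of $\mathcal{H}^1$ along the subsequence. For this I would appeal to the metric-space version of Go\l\c{a}b's theorem---closed connected sets converging in the Hausdorff sense to a closed connected limit satisfy $\mathcal{H}^1(\Gamma_0)\leq\liminf_k\mathcal{H}^1(\Gamma_{n_k})$---references for which are already flagged in the paper at \cite[p.~840, p.~846]{MR3018174}. Applied to our subsequence this gives $\mathcal{H}^1(\Gamma_0)\leq\ell$, and since $\Gamma_0$ is itself a competitor in the infimum defining $\ell$, equality holds and $\Gamma_0$ is a minimizer. The main (indeed, only) obstacle is the Go\l\c{a}b step: its classical Euclidean proof uses fine covering and projection arguments that must be transported with some care to the general metric setting, whereas the compactness and stability ingredients are formal and hold in any proper metric space.
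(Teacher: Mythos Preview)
Your proposal is correct and matches the paper's approach: the paper does not supply its own proof of this theorem but cites it from \cite[Theorem 4.4.20]{MR2039660}, remarking that it follows from metric versions of Blaschke's selection theorem and Go\l\c{a}b's theorem (with the reference \cite[p.~840, p.~846]{MR3018174} for the latter), which is precisely the direct-method argument you outline.
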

We recall that a \emph{proper} metric space is one in which all
closed balls are compact, and that every proper metric space is
complete, and every complete and doubling metric space is proper.

Theorem \ref{t:exist}
 will be used to show the existence of a
minimal-length covering continuum $\Gamma_0$ for the set $K$ in
Theorem \ref{t:FromConnToReg}. To prove $1$-regularity of
$\Gamma_0$, we will argue by contradiction and construct a
covering continuum $\Gamma_0'$ of smaller length in the
hypothetical scenario that $1$-regularity of $\Gamma_0$ fails.
This construction of $\Gamma_0'$ proceeds by locally modifying
$\Gamma_0$ using the assumption on $K$. The only step which is
trickier in our setting than in the case $\X=\mathbb{R}^2$ is to
maintain connectedness of the modified set. In $\mathbb{R}^2$ this
can be achieved simply by adding a suitable circle (and possibly a
line segment). Our construction instead uses the following
observation.

\begin{proposition}[Short paths connecting points in quasiconvex doubling
spaces]\label{p:Conn} Assume that $(\X,\sfd)$ is a quasiconvex
metric space which is doubling with constant $D\geq 1$. Then there
exists a constant $\lambda >0$ (depending on the doubling and
quasiconvexity constants) such that for all $x\in \X$ and $r>0$
the following holds. Every finite set $P\subset \overline{B_r(x)}$
is contained in a closed connected set $\Gamma$ with
\begin{equation}\label{eq:LengthFiniteSetP}
\mathcal{H}^1(\Gamma) \leq \lambda r
\,\mathrm{card}(P)^{(2D-1)/2D}.
\end{equation}
\end{proposition}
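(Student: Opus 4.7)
The plan is a two-scale greedy construction: at a coarse scale I select a small set of ``hub'' points, one in each cell of a doubling cover of $\overline{B_r(x)}$; at a fine scale I connect each point of $P$ to its hub by a quasigeodesic; and I tie the hubs together through a single base point. Quasiconvexity replaces metric-distance estimates by curve-length estimates, while the doubling property controls the number of hubs, so that optimizing a single scale parameter yields the claimed bound.

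Fix a parameter $\varepsilon \in (0,1/2]$ to be chosen later and let $L$ be the quasiconvexity constant of $(\X,\sfd)$. Since $\overline{B_r(x)}$ has diameter at most $2r$, Definition \ref{def:doubling} provides sets $S_1,\ldots,S_M \subset \X$ covering $\overline{B_r(x)}$ with $M\le \varepsilon^{-D}$ and $\operatorname{diam}(S_i)\le 2\varepsilon r$ for every $i$. Discarding cells that do not meet $P$, I may assume $P\cap S_i\ne\emptyset$ and pick a hub $q_i\in P\cap S_i$. For each $p\in P$, choose some $i(p)$ with $p\in S_{i(p)}$ and a curve $\gamma_p$ from $p$ to $q_{i(p)}$ of length at most $2L\varepsilon r$ (available by quasiconvexity, as $\sfd(p,q_{i(p)})\le 2\varepsilon r$). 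For $i\ge 2$, let $\eta_i$ be a curve from $q_i$ to $q_1$ of length at most $2Lr$. Set
\begin{equation*}
\Gamma := \bigcup_{p\in P}\gamma_p \ \cup\ \bigcup_{i=2}^{M}\eta_i.
\end{equation*}
This is a finite union of compact connected sets, and each of them contains $q_1$ (each $\gamma_p$ meets some $q_{i(p)}$, which is joined to $q_1$ by $\eta_{i(p)}$, or lies in $\eta_{i(p)}$ when $i(p)=1$). Hence $\Gamma$ is closed, connected, and contains $P$, with
\begin{equation*}
\mathcal{H}^1(\Gamma)\le 2L\varepsilon r\cdot n + 2Lr\cdot M \le 2Lr\bigl(\varepsilon n + \varepsilon^{-D}\bigr), \qquad n:=\operatorname{card}(P).
\end{equation*}

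Optimizing the right-hand side, I take $\varepsilon := \min\{1/2,\, n^{-1/(D+1)}\}$. When $n^{-1/(D+1)}\le 1/2$, both summands are comparable and give $\mathcal{H}^1(\Gamma)\lesssim_{L,D} r\cdot n^{D/(D+1)}$. Since $\frac{D}{D+1}\le \frac{2D-1}{2D}$ for every $D\ge 1$ and $n\ge 1$, this is bounded by $\lambda r\, n^{(2D-1)/(2D)}$ for a suitable $\lambda$; the complementary regime $n<2^{D+1}$ is trivial because $n^{(2D-1)/(2D)}\ge 1$ and one may take $\Gamma$ to be the union of $n-1$ quasigeodesics of length $\le 2Lr$ joining all points of $P$ to any single fixed $p_0\in P$. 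I do not foresee a serious obstacle here: the only mildly delicate point is verifying connectedness of $\Gamma$, but this is automatic because every constituent curve passes through the base hub $q_1$.
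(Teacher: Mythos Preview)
Your proof is correct and follows the same two-scale ``hub and spoke'' strategy as the paper: cover the ball at a coarse scale, connect each point of $P$ to a nearby hub, and tie the hubs together. There are two minor differences worth noting. First, the paper covers by balls and invokes the bounded-overlap property of the doubling cover to control $\sum_j \mathrm{card}(P\cap B_j)$, whereas you count exactly one spoke per point of $P$, which is cleaner and avoids the overlap argument entirely. Second, your scale choice $\varepsilon = n^{-1/(D+1)}$ actually balances the two terms optimally and yields the sharper exponent $D/(D+1)$, which you then relax to the paper's $(2D-1)/(2D)$; the paper instead takes $\varepsilon = n^{-1/(2D)}$, which gives the stated exponent directly but is not the optimum of the construction. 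One small wording issue: it is not literally true that ``every constituent curve passes through the base hub $q_1$''---the spokes $\gamma_p$ only reach $q_{i(p)}$---but your parenthetical already gives the correct argument that $\gamma_p \cup \eta_{i(p)}$ is connected and contains $q_1$.
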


\begin{proof} Without loss of generality, we may assume that
$\mathrm{card}(P)\geq 2^{2D}$. Since $(\X,\sfd)$ is doubling with
constant $D\geq 1$, the ball $B_r(x)$ can be covered by balls
\begin{displaymath} B_j=B_{r\,\mathrm{card}(P)^{-1/2D}}(x_j),\quad
j=1,\ldots,N,
\end{displaymath}
with bounded cardinality and overlap:
\begin{equation}\label{eq:ball_cond}
N\lesssim \mathrm{card}(P)^{\frac{1}{2}}\quad \text{and}\quad
\sup_{y\in \X}\mathrm{card}(\{i\in \{1,\ldots,N\}: y\in B_i
\})\lesssim_D 1,
\end{equation}
recall Definition \ref{def:doubling} and the subsequent comment.
It will become clear at the end of the proof why the radii of the
balls $B_j$ were chosen as above. At this point we just note that
they are of the form $\varepsilon r$ with $\varepsilon \in
(0,1/2]$.

Up to  removing unnecessary balls, we can assume that $B_j\cap B_r(x)\neq \emptyset$ for all $j.$
To proceed, we connect every $x_j$, $j=2,\ldots,N$ to $x_1$
by a curve of length $\lesssim r$, using the quasiconvexity of
$(\X,\sfd)$ and the fact that $\sfd(x_1,x_j)\leq {4r}$. The union of
these curves is a closed connected set $\Gamma_0$ with
\begin{displaymath}
\mathcal{H}^1(\Gamma_0) \lesssim r N \lesssim r\,
\mathrm{card}(P)^{\frac{1}{2}}.
\end{displaymath}
Second, for every $j=1,\ldots,N$, each point in $P\cap B_j$ can be
connected to the center $x_j$ of the ball $B_j$ by a curve of
length $\lesssim r\,\mathrm{card}(P)^{-1/2D}$, again thanks to
quasiconvexity. Thus the points $P\cap B_j$ can be connected to
$x_j$ by a closed connected set $\Gamma_j$ of total measure
\begin{displaymath}
\mathcal{H}^1(\Gamma_j) \lesssim \mathrm{card}(P\cap B_j) \, r\,
\mathrm{card}(P)^{-1/2D}.
\end{displaymath}
Moreover,
\begin{displaymath}
\mathcal{H}^1\left(\bigcup_{j=1}^N \Gamma_j\right)\leq\sum_{j=1}^N
\mathcal{H}^1(\Gamma_j) \lesssim r\,
\mathrm{card}(P)^{-\frac{1}{2D}} \sum_{j=1}^N  \mathrm{card}(P\cap
B_j)\lesssim_D r\, \mathrm{card}(P)^{1-\frac{1}{2D}},
\end{displaymath}
where we have used the controlled overlap of the balls $B_j$,
$j=1,\ldots,N$ in the last inequality, as quantified in
\eqref{eq:ball_cond}. Finally, $\Gamma=\Gamma_0 \cup
\left(\bigcup_{j=1}^n \Gamma_j\right)$ is a covering continuum for
$P$ with the desired property \eqref{eq:LengthFiniteSetP} since
\begin{displaymath}
\tfrac{1}{2}\leq 1-\tfrac{1}{2D} \quad \text{for }D\geq 1.
\end{displaymath}
\end{proof}

We are now ready to prove the main theorem of this section.
\begin{proof}[Proof of Theorem \ref{t:FromConnToReg}]
Let $(\X,\sfd)$ and $K$ be as in the statement of Theorem
\ref{t:FromConnToReg}. Applying the assumption to a point $x\in K$
and $r=\mathrm{diam}(K)$, we find a closed connected set $\Gamma
\supset K$ with $\mathcal{H}^1(\Gamma) \leq
C\mathrm{diam}(K)<\infty$. Since every complete and doubling
metric space is proper, we can the apply Theorem \ref{t:exist} to
deduce that there exists a closed and connected set $\Gamma_0
\supseteq K$ with smallest $\mathcal{H}^1$ measure among all such
sets. Being connected, the set $\Gamma_0$ is automatically
\emph{lower} $1$-regular with a universal constant, that is
\begin{displaymath}
\mathcal{H}^1(\Gamma_0 \cap B_r(x))\gtrsim r,\quad x\in
\Gamma_0,\, 0<r\leq 2\,\mathrm{diam}(\Gamma_0),
\end{displaymath}
see for instance \cite[Lemma 4.4.5]{MR2039660}. Hence it suffices
to prove that $\Gamma_0$ is also \emph{upper} $1$-regular.

To this end, fix a  constant $C_0>2C$. The precise value of $C_0$
will be determined later. Towards a contradiction, we assume that
$\Gamma_0$ fails to be upper $1$-regular with constant $C_0$, that
is, $\Gamma_0 \notin \reg_1^+(C_0)$. Thus there exists $x_0\in
\Gamma_0$ and $0<r\leq\mathrm{diam}(\Gamma_0)$ such that
\begin{equation}\label{eq:ExceptBall}
\mathcal{H}^1(\Gamma_0 \cap B_r(x_0))> C_0 r.
\end{equation}
(If upper Ahlfors regularity fails, it has to fail for a radius
$r\leq \mathrm{diam}(\Gamma_0)$.)
 We want to work with the essentially largest radius with
 this property. To be more precise, we set
\begin{align*}
r_0&:= \sup\left\{s\in [r,\mathrm{diam}(\Gamma_0)]:\,
\mathcal{H}^1(\Gamma_0 \cap B_s(x_0))> C_0 s \right\}\\& =
\sup\left\{s\in [r,\mathrm{diam}(\Gamma_0)]:\, q(s)> C_0\right\},
\end{align*}
where
\begin{displaymath}
q(s):= \frac{\mathcal{H}^1(\Gamma_0 \cap B_s(x_0))}{s},\quad s\in
[r,\mathrm{diam}(\Gamma_0)].
\end{displaymath}
Clearly, $q(r)>C_0$ and $q(s)<C_0$ for $s\in
[\mathrm{diam}(\Gamma_0)/2,\mathrm{diam}(\Gamma_0)]$ since
\begin{displaymath}
\mathcal{H}^1(\Gamma_0 \cap B_s(x_0))\leq
\mathcal{H}^1(\Gamma_0)\leq 2
\frac{C\mathrm{diam}(K)}{\mathrm{diam}(\Gamma_0)}
\,\tfrac{1}{2}\mathrm{diam}(\Gamma_0) <  C_0 \, \tfrac{1}{2}
\mathrm{diam}(\Gamma_0)
\end{displaymath}
for all $s$, by the minimality property of $\Gamma_0$ and the
existence of $\Gamma \supset K$ with $\mathcal{H}^1(\Gamma) \leq
C\mathrm{diam}(K)$. It follows that $r\leq r_0
<\mathrm{diam}(\Gamma_0)/2$ (and thus $2r_0<
\mathrm{diam}(\Gamma_0)$).

We will now locally modify $\Gamma_0$ at $B_{r_0}(x_0)$ to
construct a closed connected $\Gamma_0'\supseteq K$ with
$\mathcal{H}^1(\Gamma_0')<\mathcal{H}^1(\Gamma_0)$. This will
contradict the minimality property of $\Gamma_0$ and thus show
that the counter assumption on the existence of a ball as in
\eqref{eq:ExceptBall} cannot be true. Thus $\Gamma_0$ must in fact
be upper $1$-regular with constant $C_0$.

We now explain the modification of $\Gamma_0$ locally around
$B_{r_0}(x_0)$. To ensure connectedness of the modified set
$\Gamma_0'$, we will apply Proposition \ref{p:Conn}. Hence we
would like to consider a ball $B_{\rho}(x_0)$ with $\rho\sim r_0$
such that we have a suitable upper bound on
$\mathrm{card}(\Gamma_0 \cap \partial B_{\rho}(x_0))$. For this purpose, we
apply the coarea (Eilenberg) inequality \cite[Theorem
1]{eilenberg} to the $1$-Lipschitz function $f:(\X,\sfd) \to
\mathbb{R}$ given by $f(x):=\sfd(x,x_0)$. Then
\begin{align*}
\int_{[r_0,2r_0]}^{\ast} \mathcal{H}^0(\Gamma_0 \cap \partial
B_s(x_0))\, ds & \leq\int_{\mathbb{R}}^{\ast}
\mathcal{H}^0([\Gamma_0 \cap B_{2r_0}(x_0)\setminus B_{r_0}(x_0)]
\cap f^{-1}(\{s\}))\, ds
\\&\leq \mathcal{H}^1(\Gamma_0\cap B_{2r_0}(x_0)\setminus B_{r_0}(x_0))\leq C_0  r_0,
\end{align*}
where we  used the maximality property of $r_0$ in the last step.
It follows that there must exist $\rho \in [r_0,2r_0]$ such that
\begin{displaymath}
 \mathcal{H}^0(\Gamma_0 \cap
\partial B_{\rho}(x_0)) \leq  C_0.
\end{displaymath}
We next apply Proposition \ref{p:Conn} to the point set
\begin{displaymath}
P:= \Gamma_0 \cap
\partial B_{\rho}(x_0).
\end{displaymath}
Since $\rho <\mathrm{diam}(\Gamma_0)$, $x_0 \in \Gamma_0$ and
$\Gamma_0$ is connected, $P$ contains clearly at least one point.
Proposition \ref{p:Conn}  allows us to find a closed connected set
$\Gamma_P$ in $(\X,\sfd)$ with
\begin{equation}\label{eq:rho_upper}
\Gamma_P \supseteq P\quad\text{and}\quad \mathcal{H}^1(\Gamma_P)
\leq \lambda \rho \left(C_0\right)^{\frac{2D-1}{2D}} = \lambda
C_0^{\frac{-1}{2D}} \,  C_0 \rho,
\end{equation}
where $\lambda$ depends only on the doubling and quasiconvexity
constants of $(\X,\sfd)$.

The set $[\Gamma_0 \setminus B_{\rho}(x_0)]\cup \Gamma_P$ is
connected by construction. If $K \cap B_{\rho}(x_0)
$ is empty, there is nothing further to be done, but
otherwise, we have to enlarge our continuum in order to cover
$K\cap B_{\rho}(x_0)
$ as well. The assumption of Theorem
\ref{t:FromConnToReg} allows us to find a (possibly empty) closed connected set
$\Gamma_{x_0,\rho}$ such that
\begin{equation}\label{eq:rho_upper2}
\Gamma_{x_0,\rho}\supseteq K\cap B_{\rho}(x_0)\quad\text{and}\quad
\mathcal{H}^1(\Gamma_{x_0,\rho}) \leq 2C \rho.
\end{equation}
By quasiconvexity, it is further possible to connect $\Gamma_P$
and $\Gamma_{x_0,\rho}$ by a curve $\Gamma_{P,x_0,\rho}$ with
\begin{equation}\label{eq:rho_upper3}
\mathcal{H}^1(\Gamma_{P,x_0,\rho}) \lesssim \rho.
\end{equation}
If $\Gamma_{x_0,\rho}=\emptyset$ we simply put $\Gamma_{P,x_0,\rho}\coloneqq \emptyset.$
By construction, we know that
\begin{equation}\label{eq:rho_lower}
\mathcal{H}^1(\Gamma_0 \cap B_{\rho}(x_0))\geq
\mathcal{H}^1(\Gamma_0 \cap B_{r_0}(x_0)) =  C_0 r_0\geq
\tfrac{C_0}{2} \rho.
\end{equation}
Hence it is clear that we can choose the constant $C_0$ large
enough (depending only on the doubling and quasiconvexity
constants of $(\X,\sfd)$ and on $C$) such that the upper bounds
for the $\mathcal{H}^1$ measure in \eqref{eq:rho_upper} -
\eqref{eq:rho_upper3} are each less than $C_0 \rho /6$, so that
\begin{equation}\label{eq:smaller}
\mathcal{H}^1(\Gamma_P \cup \Gamma_{x_0,\rho}
\cup\Gamma_{P,x_0,\rho} ) < \tfrac{C_0}{2} \rho \leq
\mathcal{H}^1(\Gamma_0 \cap B_{\rho}(x_0)).
\end{equation}
Since
\begin{displaymath}
\Gamma'_0:= [\Gamma_0 \setminus B_{\rho}(x_0)] \cup \Gamma_P \cup
\Gamma_{x_0,\rho} \cup \Gamma_{P,x_0,\rho}
\end{displaymath}
is a closed connected set of smaller $\mathcal{H}^1$
measure than $\Gamma_0$, we have reached a contradiction with the
minimality property of $\Gamma_0$. Thus the counter assumption
cannot hold and in fact $\Gamma_0\in \reg^+_1(C_0)$, and
eventually, $\Gamma_0\in \mathrm{Reg}_1(C_0')$, where $C_0'$ the
maximum of the lower and upper regularity constants of $\Gamma_0$.
\end{proof}

\subsubsection{The unbounded case}\label{s:unbounded}
Next we give a sufficient criterion ensuring that an unbounded set
which can be locally covered by connected 1-regular sets, can be
itself covered by a connected 1-regular set. The argument is
inspired by \cite[p.202 ff]{MR2129693}.
\begin{proposition}\label{prop:unbounded connected covering}
    Let $(\X,\sfd)$ be a quasiconvex  and doubling metric space and $t_0>1$ be any constant. Let $E\subset \X$ be a set such that  for every $x\in E$ and $r>0$  there exists a connected set   $\Gamma_{x,r}\in \reg_1(C)$, satisfying $B_{r}(x)\cap E\subset \Gamma_{x,r}\subset B_{t_0r}(x)$ and where $C>0$ is a constant independent of $x$ and $r$. Then $E$ is contained in a connected set $\Gamma_0\in \reg_1(\tilde C)$, where $\tilde C$ is a constant depending only on $t_0,C$ and the doubling and quasiconvexity constants of $(\X,\sfd)$.
\end{proposition}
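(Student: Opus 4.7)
The plan is to reduce to the bounded setting handled by Theorem~\ref{t:FromConnToReg}, apply it to an exhaustion of $E$, and then extract a global cover via a Blaschke-type diagonal compactness argument. If $E$ is bounded, the hypothesis alone suffices: picking $x_0 \in E$ and any $r > \diam(E)$, one has $E \subset B_r(x_0) \cap E \subset \Gamma_{x_0, r}$, and $\Gamma_{x_0, r} \in \reg_1(C)$ is already the desired connected $1$-regular covering set. I therefore assume from now on that $E$ is unbounded.

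Fix $x_0 \in E$ and set $E_n := E \cap \overline{B_{2^n}(x_0)}$ for $n \in \mathbb{N}$. First I would verify the hypothesis of Theorem~\ref{t:FromConnToReg} for $E_n$: for $x \in E_n$ and $r \in (0, \diam(E_n)]$, the set $\Gamma_{x,r}$ supplied by our assumption is closed, connected, contains $E_n \cap B_r(x)$, and the inclusion $\diam(\Gamma_{x,r}) \leq 2 t_0 r$ combined with $1$-regularity gives $\mathcal{H}^1(\Gamma_{x,r}) \lesssim_{C,t_0} r$. Theorem~\ref{t:FromConnToReg} thus produces a minimum-length connected covering continuum $\tilde{\Gamma}_n \supset E_n$ with $\tilde{\Gamma}_n \in \reg_1(C_0)$ and $\mathcal{H}^1(\tilde{\Gamma}_n) \lesssim 2^n$, where $C_0$ depends only on $C$, $t_0$ and the doubling and quasiconvexity constants of $\X$; in particular, since $x_0 \in \tilde{\Gamma}_n$ and $\diam(\tilde{\Gamma}_n) \lesssim \mathcal{H}^1(\tilde{\Gamma}_n)$, we have $\tilde{\Gamma}_n \subset B_{\alpha 2^n}(x_0)$ for a universal constant $\alpha$.

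Since $(\X,\sfd)$ is complete and doubling, hence proper, Blaschke's selection theorem together with a diagonal extraction yields a subsequence $(n_j)$ such that, for every integer $N \geq 1$, the sets $\tilde{\Gamma}_{n_j} \cap \overline{B_N(x_0)}$ converge in Hausdorff distance to a closed set $\Gamma^{(N)} \subset \overline{B_N(x_0)}$. The family $\{\Gamma^{(N)}\}$ is nondecreasing in $N$, and I would define
\begin{equation*}
    \Gamma_0 := \overline{\bigcup_{N \geq 1} \Gamma^{(N)}}.
\end{equation*}
The required properties are then: coverage $E \subset \Gamma_0$, which holds because each $x \in E$ lies in $\tilde{\Gamma}_{n_j}$ for $j$ large; connectedness of $\Gamma_0$, because each $\tilde{\Gamma}_{n_j}$ is connected and contains the common point $x_0$, so each Hausdorff limit $\Gamma^{(N)}$ is itself connected; and $1$-regularity of $\Gamma_0$ with a constant $\tilde{C}$ depending only on $C_0$ and $\alpha$, where the upper bound follows from the uniform local bound $\mathcal{H}^1(B_r(z) \cap \tilde{\Gamma}_{n_j}) \leq 2 C_0 r$ at $z \in \tilde{\Gamma}_{n_j}$ combined with the lower semicontinuity of $\mathcal{H}^1$ under Hausdorff convergence (Go\l{}\c{a}b's theorem), and the lower bound follows from connectedness via \cite[Lemma 4.4.5]{MR2039660}.

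The main obstacle I anticipate is the transfer of the upper $1$-regularity bound from the approximants $\tilde{\Gamma}_{n_j}$ to $\Gamma_0$ at arbitrary points $y \in \Gamma_0$, not only at the basepoint $x_0$. Concretely, one must select companions $y_j \in \tilde{\Gamma}_{n_j}$ with $y_j \to y$ (guaranteed by the Hausdorff convergence), apply $1$-regularity at $y_j$ on a slightly enlarged ball, and then pass to the limit; the technical subtlety is that intersecting a connected set with a ball may destroy connectedness, so invoking Go\l{}\c{a}b-type lower semicontinuity of $\mathcal{H}^1$ (typically phrased for connected compacta) at the localized level requires either passing through suitable sub-continua or using the more general form of $\mathcal{H}^1$ lower semicontinuity for Hausdorff-convergent sequences of closed sets.
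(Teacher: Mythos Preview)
Your compactness strategy is genuinely different from the paper's direct construction, but as written there is a real gap in both the connectedness and the upper-regularity steps, and they have the same root cause. You claim each $\Gamma^{(N)}$ is connected ``because each $\tilde{\Gamma}_{n_j}$ is connected and contains $x_0$'', but $\Gamma^{(N)}$ is the Hausdorff limit of the \emph{intersections} $\tilde{\Gamma}_{n_j}\cap\overline{B_N(x_0)}$, which are typically disconnected once $\tilde{\Gamma}_{n_j}$ leaves and re-enters the ball; Hausdorff limits of disconnected compacta need not be connected, so the stated justification fails. The very same issue blocks the upper-regularity transfer: Go\l{}\c{a}b's lower semicontinuity of $\mathcal{H}^1$ is formulated for \emph{connected} approximants, and you acknowledge but do not resolve that $\tilde{\Gamma}_{n_j}\cap\overline{B_r(y)}$ need not be connected. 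The approach is salvageable---for instance by passing weak-$\ast$ limits of the uniformly $1$-regular measures $\mathcal{H}^1\lfloor_{\tilde{\Gamma}_{n_j}}$ to obtain a $1$-regular limit measure supported on the Kuratowski limit, or by using coarea to bound the number of components of $\tilde{\Gamma}_{n_j}\cap\overline{B_\rho(y)}$ for a good radius $\rho$---but this is substantial extra work that the proposal does not carry out.

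The paper avoids all limiting arguments. It builds a ``spiral'' covering family $\mathcal{B}$ of balls centred near a basepoint, with radii growing geometrically and with controlled overlap (Lemma~\ref{lem:spiral convering}); it places one local piece $\Gamma_B\subset tB$ in each ball using the hypothesis, and adds short quasiconvex arcs $\Gamma_B'$ joining the centers of neighbouring balls. Upper $1$-regularity of the union is proved directly from the multiplicity and radius bounds of the covering (Lemma~\ref{lem:upper regularity union}), and connectedness is a combinatorial argument using that intersecting balls have their centers joined. The trade-off: your route is conceptually natural and would give a soft proof once the measure-limit lemma is in place, whereas the paper's route is more hands-on but entirely elementary and yields explicit control of the regularity constant without any semicontinuity input.
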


First we observe that Proposition \ref{prop:unbounded connected covering} allows immediately to improve the statements of Theorem \ref{t:FromConnToReg} and Corollary \ref{c:FromLipToReg}
to the unbounded case.
\begin{cor}\label{c:unbounded}
   Theorem \ref{t:FromConnToReg} and Corollary \ref{c:FromLipToReg} hold true also for unbounded sets $K$  (if the respective assumptions are satisfied for all $0<r<\infty$).
\end{cor}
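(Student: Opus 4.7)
The plan is to reduce the unbounded version of Theorem \ref{t:FromConnToReg} to its already proven bounded counterpart, with Proposition \ref{prop:unbounded connected covering} providing the final globalization step. Once Theorem \ref{t:FromConnToReg} is established for unbounded $K$, the unbounded version of Corollary \ref{c:FromLipToReg} follows by exactly the Lipschitz-extension reduction written in the bounded proof of that corollary, so I focus on Theorem \ref{t:FromConnToReg}.

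First I would fix an arbitrary $x \in K$ and $r > 0$ and work with the bounded set $K_{x,r} \coloneqq K \cap \overline{B_r(x)}$ (of diameter at most $2r$). For every $y \in K_{x,r}$ and every $0 < s \le \diam(K_{x,r})$, the assumption on $K$ (now assumed for all $0 < r < \infty$) furnishes a closed connected set $\Gamma_{y,s}\supseteq K\cap B_s(y)\supseteq K_{x,r}\cap B_s(y)$ with $\mathcal{H}^1(\Gamma_{y,s})\le Cs$. Hence the bounded $K_{x,r}$ satisfies the hypotheses of the already-proven Theorem \ref{t:FromConnToReg}, producing a closed connected set $\Gamma_{x,r}^0\in\reg_1(C_0)$ such that $K_{x,r}\subseteq\Gamma_{x,r}^0$ and $\mathcal{H}^1(\Gamma_{x,r}^0)\le C\diam(K_{x,r})\le 2Cr$, with $C_0$ independent of $x$ and $r$.

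Next I would translate these local coverings into the form required by Proposition \ref{prop:unbounded connected covering}. Since $x\in K_{x,r}\subseteq\Gamma_{x,r}^0$ and $\Gamma_{x,r}^0$ is connected of finite length, the standard fact $\diam(\Gamma)\le\mathcal{H}^1(\Gamma)$ (obtained by applying $\mathcal{H}^1$ to the image of $\Gamma$ under the $1$-Lipschitz function $\sfd(x,\cdot)$ and using connectedness of real intervals) yields $\Gamma_{x,r}^0\subseteq B_{t_0 r}(x)$ with $t_0\coloneqq 2C+1$, independent of $x$ and $r$. Since moreover $K\cap B_r(x)\subseteq K_{x,r}\subseteq\Gamma_{x,r}^0$, the family $\{\Gamma_{x,r}^0\}_{x\in K,\,r>0}$ fulfills the hypotheses of Proposition \ref{prop:unbounded connected covering} with $E=K$. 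Applying that proposition produces the desired connected $\Gamma_0\in\reg_1(\tilde C)$ containing $K$, with $\tilde C$ depending only on $C$ and the doubling and quasiconvexity constants of $(\X,\sfd)$.

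There is no genuine obstacle in this argument beyond the control of $\diam(\Gamma_{x,r}^0)$ in terms of its length, which is used precisely to secure the containment $\Gamma_{x,r}^0\subseteq B_{t_0 r}(x)$ needed to feed Proposition \ref{prop:unbounded connected covering}. The analogous statement for Corollary \ref{c:FromLipToReg} then follows immediately: the Lipschitz extension property of $(\mathbb{R},\X)$ converts the local Lipschitz covers $f_{x,r}\colon A_{x,r}\to K\cap B_r(x)$ into local covering continua as in the bounded proof of Corollary \ref{c:FromLipToReg}, and one applies the just-proved unbounded Theorem \ref{t:FromConnToReg}.
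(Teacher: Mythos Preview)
Your proposal is correct and follows essentially the same approach as the paper: localize, apply the bounded version of Theorem \ref{t:FromConnToReg} to $K\cap B_r(x)$, then globalize via Proposition \ref{prop:unbounded connected covering}. The only cosmetic difference is that you bound $\diam(\Gamma_{x,r}^0)$ by the general fact $\diam(\Gamma)\le\mathcal{H}^1(\Gamma)$ for connected sets, whereas the paper uses the $1$-regularity of $\Gamma_{x,r}^0$ to get $\diam(\Gamma_{x,r})\le 2C_0Cr$; both lead to a valid $t_0$.
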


\begin{proof}
    We only need to remove the boundedness assumption from Theorem \ref{t:FromConnToReg}, then it would be automatically removed also from Corollary \ref{c:FromLipToReg} which is deduced from it. Let $K\subset \X$ be a set satisfying the hypotheses of Theorem \ref{t:FromConnToReg} with constant $C$,
    except that it is unbounded. For every $x \in K$ and $r>0$, the set $K'\coloneqq B_r(x)\cap K$ satisfies all the assumptions of Theorem \ref{t:FromConnToReg}, being a bounded subset of $K.$ Hence by Theorem \ref{t:FromConnToReg} (in the bounded case) we deduce that $B_r(x)\cap K$ is contained in a closed connected set $\Gamma_{x,r}\in \reg_1(C_0)$, where $C_0$ is a constant depending only on $C$ and the doubling and quasiconvexity constants of $(\X,\sfd),$ and such that $\mathcal H^1(\Gamma_{x,r})\le 2Cr.$ This and  the 1-regularity show $\diam(\Gamma_{x,r})\le 2C_0Cr$, so that $\Gamma_{x,r}\subset B_{2C_0Cr}(x)$. Hence the assumptions of Proposition \ref{prop:unbounded connected covering} with $E=K$ are satisfied taking $t_0=2CC_0$, which concludes the proof.
\end{proof}
Theorem \ref{t:FromConnToReg}, in the bounded case, will be needed
for the proof of Proposition \ref{prop:unbounded connected
covering}, which is why we did not prove that theorem directly in
the full version. The main technical tool needed for the proof of
Proposition \ref{prop:unbounded connected covering}   is the
following covering lemma.
\begin{lemma}\label{lem:spiral convering}
    Let $(\X,d)$ be a doubling metric space, fix $x_0 \in \X$ and let $t>1$ be any constant. Then there exist  a (countable) family of balls $\mathcal B$ with radius $\ge 1$ and  a constant $M\ge 2$ depending only on $t$ and the doubling constant of $(\X,d)$ such that the following hold:
    \begin{enumerate}[label=\roman*)]
        \item the family $\mathcal B$ covers $\X$ and the covering $\{tB\}_{B \in \mathcal B}$ has multiplicity less than $M$,
        \item for every $R\ge 1$
        \[
        \sum_{B\in \mathcal B,\, tB\cap B_R(x_0)\neq    \emptyset} r(B)\le
        MR,
        \]
        where $r(B)$ denotes the radius of $B$,
        \item for every $B_r(x)\in \mathcal B$
         it holds that $d(x,x_0)\le Mr$,
        \item   $\# \{B' \in \mathcal B \ : \ tB'\cap tB\neq \emptyset \}\le M$, for every $ B \in \mathcal B$.
    \end{enumerate}
\end{lemma}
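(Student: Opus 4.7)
The plan is to build $\mathcal B$ as a union of local coverings of dyadic annuli around $x_0$, with the ball radii scaling linearly with the distance from $x_0$; this is the \emph{spiral} alluded to in the name of the lemma. First I would fix a small parameter $\alpha = \alpha(t) \in (0,1)$, say $\alpha := \tfrac{1}{10t}$, and let $k_0$ be the smallest non-negative integer with $\alpha 2^{k_0} \geq 1$. Using the doubling property of $(\X,d)$, I would cover $\overline{B_{2^{k_0+1}}(x_0)}$ by a finite family $\mathcal B_0$ of balls of radius $1$ centered inside it, and for each integer $k > k_0$ cover the dyadic annulus $A_k := \overline{B_{2^{k+1}}(x_0)} \setminus B_{2^k}(x_0)$ by a family $\mathcal B_k$ of at most $N = N(t, D)$ balls of radius $r_k := \alpha 2^k \geq 1$ centered in $A_k$. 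The cardinality of $\mathcal B_k$ is controlled uniformly in $k$ because the ratio between the diameter of $A_k$ and the chosen radius $r_k$ is a fixed constant. I then define $\mathcal B := \mathcal B_0 \cup \bigcup_{k > k_0} \mathcal B_k$.

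Verifying the four properties, most of them reduce to simple scale-based computations. Property (iii) is immediate from the construction: for $B_{r_k}(x) \in \mathcal B_k$ one has $d(x, x_0) \leq 2^{k+1} = (2/\alpha) r_k$, and the balls in $\mathcal B_0$ are centered in $\overline{B_{2^{k_0+1}}(x_0)}$ with radius $1 = r(B)$. For (ii), given $R \geq 1$, if $B \in \mathcal B_k$ satisfies $tB \cap B_R(x_0) \neq \emptyset$ then the triangle inequality combined with $d(x_B, x_0) \geq 2^k$ gives $2^k \leq R + t r_k = R + \alpha t \cdot 2^k$, hence $2^k \leq R/(1-\alpha t) \lesssim_t R$ by the choice of $\alpha$. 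Summing the at most $N$ radii $r_k = \alpha 2^k$ over this truncated dyadic range yields a geometric series of total length $\lesssim_{t,D} R$, to which the bounded $\mathcal B_0$-contribution is absorbed using $R \geq 1$.

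For (iv) and the multiplicity bound in (i), suppose $tB \cap tB' \neq \emptyset$ with $B \in \mathcal B_k$ and $B' \in \mathcal B_{k'}$, say $k \geq k'$. Comparing the two-sided bounds $d(x_B, x_0) \in [2^k, 2^{k+1}]$ and $d(x_{B'}, x_0) \in [2^{k'}, 2^{k'+1}]$ with the estimate $d(x_B, x_{B'}) \leq t(r_k + r_{k'}) \leq 2t\alpha \cdot 2^k$ forces $(1-2\alpha t) 2^k \leq 2^{k'+1}$, so that $|k - k'| \leq C(t)$ for the choice $\alpha t \ll 1$. A similar argument handles the interaction between $\mathcal B_0$ and $\mathcal B_k$ for $k$ near $k_0$. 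Within a bounded window of indices, the doubling property then bounds the number of centers a fixed $tB$ can meet, which gives (iv), and summing these local bounds yields the multiplicity estimate in (i).

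The main obstacle I anticipate is purely bookkeeping rather than analytic: calibrating $\alpha$ precisely enough so that the factor $1 - \alpha t$ stays bounded away from zero, ensuring that the inner region treated by $\mathcal B_0$ with unit-radius balls meshes seamlessly with the outer annular construction across the transition at $k = k_0$, and tracking how all the constants combine into a single $M$ depending only on $t$ and the doubling constant of $(\X, d)$.
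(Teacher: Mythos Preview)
Your proposal is correct and follows essentially the same approach as the paper: build $\mathcal B$ by covering concentric annuli around $x_0$ with a bounded number of balls whose radii scale linearly with the annulus radius, and verify (i)--(iv) by elementary scale comparisons. The only cosmetic difference is that the paper uses $\lambda$-adic annuli with $\lambda>2t$ (and a single ball $B_1(x_0)$ for $\mathcal B_0$), whereas you use dyadic annuli and compensate by shrinking the ball radii via $\alpha=1/(10t)$; these are equivalent parametrizations and your bookkeeping near the transition index $k_0$ is handled correctly.
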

\begin{proof}
    Fix $x_0 \in \X$, $t>1$ and a constant $\lambda> 2t$.
    We construct the family $\mathcal B$ as union of families
 $\mathcal B_k$, $k\in \mathbb N\cup\{0\}$,
    of balls defined as follows. Set $\mathcal B_0\coloneqq
    \{B_1(x_0)\}$.
    For every $k \in \mathbb N$ denote $A_k\coloneqq B_{\lambda^k}(x_0)\setminus B_{\lambda^{k-1}}(x_0)$
     and consider a set $\mathcal F_k\subset A_k$ such that $d(x,y)\ge \lambda^{k-3}$ for every distinct $x,y \in \mathcal F_k$
 and assume that $\mathcal{F}_k$ is a maximal set with this property. Set $\mathcal B_k\coloneqq \{B_{\lambda^{k-3}}(x)\}_{x \in \mathcal F_k}$
  and $\mathcal B\coloneqq \cup_{k=0}^\infty \mathcal B_k$. By construction $\mathcal B$ is a covering of $\X$ and $iii)$ holds provided $M\geq \lambda^3$.
    Since the space $(\X,\sfd)$ is doubling and $\diam(A_k)\le 2\lambda^k$, the set $A_k$ can be covered by $\le 4^D \lambda^{3D}$ sets of diameter at most $\lambda^{k-3}/2$, where  $D>0$ is the doubling constant of $(\X,\sfd)$ (recall Def.\ \ref{def:doubling}). Since each of these sets intersects at most one of the centers of the balls in $\mathcal B_k$, we deduce that
    \begin{equation}\label{eq:cardinality Bk}
        \# \mathcal B_k \le 4^D \lambda^{3D}.
    \end{equation}
    By construction the balls $\mathcal B_k$ cover the annulus $A_k$ and again by the doubling property of $(\X,\sfd)$, such covering has multiplicity $\le M$, provided $M$ is big enough depending only on the doubling constant of $(\X,\sfd)$.
    Next we observe that, by the triangle inequality, for every $B\in \mathcal B_k$ and $x \in tB$ it holds that
    \[
    \lambda^{k-2}<\lambda^{k-2}\left(\lambda-\frac{t}{\lambda}\right)\le \lambda^{k-1}-t\lambda^{k-3}\le d(x,x_0)\le \lambda^{k}+t\lambda^{k-3}\le \lambda^{k+1}\left(\frac1\lambda+ \frac{t}{\lambda^4}\right)<\lambda^{k+1},
    \]
    where in the first and last inequality we used that $\lambda> 2t>2$.
    In particular $tB\subset A_{k-1}\cup A_{k}\cup A_{k+1}$ for every $B \in \mathcal B_k$ and $k\ge 2$.
    These observations together with \eqref{eq:cardinality Bk} show at once both $i)$ and $iv)$ for suitably chosen $M$ depending only on $D$. It remains to show $ii)$. Fix $R\ge 1$ and $k \in \mathbb{N}$ such that $R \in[\lambda^{k-1},\lambda^{k}).$ By what we just observed:
    \[
    \{B \in \mathcal B \ : \ tB\cap B_R(x_0)\neq \emptyset\}\subset     \{B \in \mathcal B \ : \ tB\cap B_{\lambda^k}(x_0)\neq \emptyset\}\subset \cup_{i=0}^{k+1} \mathcal B_i.
    \]
    Therefore
    \begin{align*}
            \sum_{B\in \mathcal B,\, tB\cap B_R(x_0)\neq    \emptyset} r(B)&\le  \sum_{i=0}^{k+1} \sum_{B \in \mathcal B_i} r(B)\overset{\eqref{eq:cardinality Bk}}{\le }
         4^D \lambda^{3D} \left(1+\sum_{i=1}^{k+1} \lambda^{i-3}\right)\\
        &\le     4^D \lambda^{3D} (1+\lambda^{k-1})\le     4^D \lambda^{3D}2R,
    \end{align*}
    which proves $ii)$ for suitable $M$. Combining what we
    said so far, $M$ can be chosen large enough, depending only on
    $t$ and the doubling constant of $(\X,\sfd)$, such that the
    conditions $i)$--$iv)$ hold.
\end{proof}

The covering of balls given by Lemma \ref{lem:spiral convering} is useful thanks to the following fact.
\begin{lemma}\label{lem:upper regularity union}
     Fix $(\X,\sfd)$  a doubling metric space, $x_0 \in \X$ and $t>1$ a constant. Let $\mathcal B$   be a family of balls as given by Lemma \ref{lem:spiral convering} applied with $t$ and $x_0$. Let $\{\Gamma_B\}_{B\in \mathcal B}$ be  upper 1-regular sets $\Gamma_B\subset tB$,  with $\Gamma_B \in \reg_1^+(C)$, where $C>0$ is a constant independent of $B.$ Then  $\Gamma\coloneqq \cup_{B\in \mathcal B}\Gamma_B\in \reg_1^+(\tilde C)$, where $\tilde C$ depends only on  $C,t$ and the doubling constant of
     $(\X,\sfd).$ Moreover if $\Gamma_B$ is closed for every $B\in \mathcal B,$ then $\Gamma$ is  closed.
\end{lemma}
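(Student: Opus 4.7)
The plan is to verify upper $1$-regularity of $\Gamma$ by bounding $\mathcal H^1(\Gamma \cap B_r(x))$ linearly in $r$ for every $x \in \Gamma$ and $r>0$, and then to deduce closedness from a local finiteness property of the cover $\{tB\}_{B\in\mathcal B}$. Given $x\in\Gamma$ and $r>0$, I set $\mathcal B(x,r) \coloneqq \{B\in\mathcal B:tB\cap B_r(x)\neq\emptyset\}$. Since $\Gamma=\bigcup_B\Gamma_B$ with $\Gamma_B\subset tB$, every point of $\Gamma\cap B_r(x)$ lies in some $\Gamma_B$ with $B\in\mathcal B(x,r)$, so subadditivity gives
\[
\mathcal H^1(\Gamma\cap B_r(x))\le\sum_{B\in\mathcal B(x,r)}\mathcal H^1(\Gamma_B\cap B_r(x)).
\]
For each summand I would apply two elementary bounds. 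Upper $1$-regularity of $\Gamma_B$, extended to arbitrary centers as noted after \eqref{eq:regular}, yields $\mathcal H^1(\Gamma_B\cap B_r(x))\le 2Cr$, while $\diam(\Gamma_B)\le 2tr(B)$ combined with upper regularity gives $\mathcal H^1(\Gamma_B)\lesssim tr(B)$. Consequently each summand is $\lesssim\min(r,r(B))$, and the whole argument reduces to the combinatorial inequality
\[
\sum_{B\in\mathcal B(x,r)}\min(r,r(B))\lesssim r.
\]

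To establish this I would split according to the size of $r$ compared to $\sfd(x,x_0)$. Combining property (iii) with the defining condition of $\mathcal B(x,r)$, every $B\in\mathcal B(x,r)$ satisfies $\sfd(x_B,x_0)\le Mr(B)$ and $\sfd(x_B,x)\le tr(B)+r$, so $(M+t)\,r(B)\ge\sfd(x,x_0)-r$. In the regime $r\ll\sfd(x,x_0)$ this forces every relevant $r(B)$ to be comparable to $\sfd(x,x_0)$, so $\min(r,r(B))=r$ and it suffices to bound $\#\mathcal B(x,r)$; I would do this by dividing the estimate $\sum_{B\in\mathcal B(x,r)}r(B)\le M(r+\sfd(x,x_0))$ from property (ii), applied with $R=r+\sfd(x,x_0)$ so that $B_r(x)\subset B_R(x_0)$, by the lower bound on $r(B)$, obtaining $\#\mathcal B(x,r)=O(1)$. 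In the complementary regime $r\gtrsim\sfd(x,x_0)$, I would instead bound $\min(r,r(B))\le r(B)$ and invoke (ii) directly to get $\sum r(B)\lesssim r$. The only technicality is that (ii) requires $R\ge 1$; this is absorbed using $r(B)\ge 1$, since when both $r$ and $\sfd(x,x_0)$ are less than $1$, (ii) with $R=1$ already gives $\sum_{B\in\mathcal B(x,r)}r(B)\le M$, hence $\#\mathcal B(x,r)\le M$.

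The main obstacle, and the reason all four properties of Lemma \ref{lem:spiral convering} enter, is that $\mathcal B$ is not translation invariant: property (ii) is formulated only with respect to the distinguished base point $x_0$. Property (iii) is precisely what compensates, forcing balls meeting $B_r(x)$ to have radii comparable to $\sfd(x,x_0)$ once $r$ is small, so that bounds anchored at $x_0$ do not deteriorate when regularity is checked far from $x_0$. For the closedness assertion, the same combinatorial estimates show that $\#\{B\in\mathcal B:tB\cap B_\rho(y)\neq\emptyset\}<\infty$ for every $y\in\X$ and $\rho>0$, so the cover $\{tB\}_{B\in\mathcal B}$ is locally finite; any $y\notin\Gamma$ therefore admits a neighbourhood meeting only finitely many of the closed sets $\Gamma_B$, and the fact that locally finite unions of closed sets are closed yields that $\Gamma$ is closed.
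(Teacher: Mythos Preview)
Your argument is correct. The overall strategy matches the paper's---bound $\mathcal H^1(\Gamma\cap B_r(x))$ by the sum over $\mathcal B(x,r)$ and control that sum via the covering lemma---but your case split and the ingredients you invoke differ in a useful way. The paper first uses property (i) to pick a ball $B\in\mathcal B$ containing the test point $z$, then splits on whether $R\le (t-1)r(B)$ or not: in the first case it appeals directly to the bounded-overlap property (iv), and in the second it combines (iii) and (ii) to reduce to a ball centred at $x_0$. You instead split on whether $r\ll\sfd(x,x_0)$ or $r\gtrsim\sfd(x,x_0)$ and handle both regimes using only (ii) and (iii), extracting the finiteness of $\#\mathcal B(x,r)$ in the first regime from the radius lower bound $r(B)\gtrsim\sfd(x,x_0)$ together with the sum bound from (ii), rather than from (iv). This shows that (iv) (and indeed the covering part of (i)) are not strictly needed here; your remark that ``all four properties enter'' is therefore slightly overstated for your own proof. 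The closedness argument via local finiteness is equivalent to the paper's sequential argument.
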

\begin{proof}
    Let $z \in\Gamma$ be arbitrary. {Since $\mathcal{B}$ covers $\X$, there exists at least one $B\in \mathcal B$ with $z\in B$.  Denote by $r(B)$ the radius of $B$. Now fix $R>0$ arbitrarily.}
    If $R\le (t-1)r(B)$, then $B_R(z)\subset tB$.
{By the assumption of the lemma, $\Gamma_{B'}\cap B_R(z)\subset tB' \cap B_{R}(z)$ for every $B'\in \mathcal{B}$. It follows that if a set $\Gamma_{B'}$ intersects $B_R(z)$, then necessarily $tB\cap tB'\neq \emptyset$.}
      Therefore, by $iv)$ of Lemma \ref{lem:spiral convering}, we have
    \[
    \mathcal H^1(B_R(z)\cap \Gamma)=\mathcal H^1(B_R(z)\cap tB\cap \Gamma)\le \sum_{B'\in\mathcal B, \, tB\cap {tB'}\neq \emptyset}  H^1(B_R(z)\cap \Gamma_{B'})\le  MC R,
    \]
    where $M\ge 1$ is the constant given by Lemma \ref{lem:spiral convering}.
    If instead $R\ge (1-t)r(B),$  by $iii)$ of Lemma \ref{lem:spiral convering}{ and since $z\in B\in \mathcal{B}$, it holds $d(z,x_0)\le r(B)+Mr(B)\le (1-t)^{-1} (1+M)R.$ Hence
    $B_R(z)\subset B_{c_t R}(x_0)$, where $c_t\coloneqq (1-t)^{-1} (1+M)+1$.} Therefore by $ii)$ of Lemma \ref{lem:spiral convering} we have
    \begin{align*}
        \mathcal H^1(B_R(z)\cap \Gamma) &\le    \mathcal H^1(B_{c_t R}(x_0)\cap \Gamma) \le \sum_{B \in \mathcal B} \mathcal H^1(B_{c_t R}(x_0)\cap \Gamma_B)\\
        &\le     \sum_{B \in \mathcal B,\, tB\cap B_{c_t R}(x_0) \neq \emptyset} \mathcal H^1(\Gamma_B)\le Ct \sum_{B \in \mathcal B,\, tB\cap B_{c_t R}(x_0) \neq \emptyset}r(B)\le Ctc_t\cdot MR,
    \end{align*}
where we used again that $\Gamma_B\subset tB.$

Finally assume that each $\Gamma_B$ is closed and let $\{x_n\}_n\subset  \Gamma$ be a converging sequence in $(\X,\sfd).$ In particular $\{x_n\}_n$ is bounded and by  $iii)$ in Lemma \ref{lem:spiral convering} we deduce that $\{x_n\}_n$ is contained in a finite union of closed sets $\Gamma_B$. This shows that $x_n$ must converge to a point in some $\Gamma_B\subset \Gamma$ and so $\Gamma$ is closed.
\end{proof}

We are now ready to prove Proposition \ref{prop:unbounded connected covering}.
\begin{proof}[Proof of Proposition \ref{prop:unbounded connected covering}]
    Fix a constant $t>t_0$ to be chosen later,  depending only on $t_0$ and the the doubling and quasiconvexity constants of $(\X,\sfd)$.
    Moreover throughout the proof $\tilde C$
    denotes a constant  whose value might change from line to line,
    but depending only on $C,t_0$ and the doubling and quasiconvexity constants of $(\X,\sfd)$.

     Let $\mathcal B$ be the family of  balls given by Lemma \ref{lem:spiral convering} applied to the metric space $(\X,\sfd)$
      and with constant $t>1.$
      For every ball $B=B_r(x)\in \mathcal B$ we build a connected set $\Gamma_B\subset tB$ as follows.
      If $B\cap E=\emptyset$, set $\Gamma_B=\emptyset$, otherwise fix $y \in B\cap E.$
      By assumption there exists a connected set $\Gamma_{B}\in \reg_1(C)$ such that
  $$B\cap E\subset B_{2r}(y)\cap E \subset \Gamma_B \subset B_{2t_0r}(y) \subset B_{2t_0r+r}(x)\subset B_{tr}(x),$$
  provided $t\ge 2t_0+1.$
  Define $\Gamma\coloneqq \cup_{B \in \mathcal B}\Gamma_B$.
  Since the family $\mathcal B$ covers $\X$, the set $\Gamma$ contains $E$.
  Moreover by Lemma \ref{lem:upper regularity union} it follows that $\Gamma\in \reg_1^+(\tilde C)$  and that  if  each $\Gamma_B$ is closed then also $\Gamma$ is closed.  However $\Gamma$ is not necessarily connected.
   To fix this, for every $B=B_r(x)$ we  build an additional closed connected set $\Gamma_B'$ as follows.
   First observe that by Theorem \ref{t:FromConnToReg} and since $(\X,\sfd)$ is quasiconvex we have that for every $x_1,x_2\in \X$
   there exists a \emph{$1$-regular} {(closed)}
   connected set $\gamma\in \reg_1(\tilde C)$
   such that $x_1,x_2 \in \gamma$ and $\diam(\gamma)\le L\sfd(x_1,x_2)$, where $L$ depends only on the doubling and quasiconvexity constants of $\X$.

   We pass to the construction of $\Gamma_{B}'$.
   For every $B'=B_{r'}(x')\in \mathcal B$ such that $B'\cap B\neq \emptyset$
   and $r'\le r$ consider  a $1$-regular closed connected set $\gamma$ containing $x$ and $x'$ (as above)
   and define $\Gamma_B'$ as the union of all such sets.
   Moreover, if $E\cap B\neq \emptyset$, we add to $\Gamma_B'$ also a 1-regular closed connected set $\gamma$ containing
   $x$ and $y \in E\cap B,$ again as above.
   In particular, by $iv)$ of Lemma \ref{lem:spiral convering}, $\Gamma_{B}'$
   is the union of at most an $M$-number of $1$-regular (closed) and connected sets
   of type $\reg_1(\tilde C)$, all containing the point $x$, where $M>0$ is a constant depending only on $t_0$
   and the doubling and quasiconvexity constants of $\X$ (since it depends also on $t$).
   Therefore, up to modifying the value of $\tilde C$, $\Gamma_B'\in \reg_s(\tilde C)$  {(note that $\Gamma_B'$ is closed)}.
   Moreover $\diam(\Gamma_B')\le Lr.$ Finally set $\Gamma'\coloneqq \cup_{B \in \mathcal B}
   \Gamma_B'$.

    Observe that by construction, up to choosing $t>L,$  it holds  $\Gamma_B'\subset tB$ for all $B \in \mathcal B$.
    Therefore by Lemma \ref{lem:upper regularity union} we deduce that $\Gamma'\in \reg_s^+(\tilde C)$ (i.e., it is upper 1-regular) and that $\Gamma'$ is closed.
    Hence  $\Gamma_0\coloneqq \Gamma\cup \Gamma' \in \reg_1^+(\tilde C)$, $\Gamma_0$ contains $E$ and $\Gamma_0$ is  closed if $\Gamma$ is closed (which is the case if all $\Gamma_B$ are closed).
    It remains to show that $\Gamma_0$ is connected (lower 1-regularity would then also follow,
    see e.g.\ \cite[Lemma 4.4.5]{MR2039660}).
     Since each $\Gamma_B$ is connected and intersects $\Gamma'$,
     it is sufficient to show that $\Gamma'$ is connected.
     Suppose by contradiction that $\Gamma'\subset U\cup V$ where  $U$ and $V$
     are two disjoint open subsets of $\X$ such that $U\cap \Gamma'\neq \emptyset \neq V\cap \Gamma'$.
     The centers of the balls in $\mathcal B$ must be contained in $\Gamma'$.
     Indeed if $B=B_r(x)\in \mathcal B$ and $x\notin \Gamma'$, then by construction the open ball $B$ does not intersect any (other) ball in $\mathcal B$,
     which contradicts the connectedness of $\X$ (since it is quasiconvex). Therefore the center of every ball in $\mathcal B$ must belong to either $U$ or $V$. Moreover, since by construction each set $\Gamma_B'$ is connected and contains the center of $B$, each $V$ and $U$ contain at least one center of a ball in $\mathcal B$. However if two balls $B'$ and $B$ are centered in $U$ and $V$ respectively, then $B\cap B'=\emptyset$, otherwise by construction both their centers belong to a connected set contained in $U\cup V$, which contradicts the fact that $U$ and $V$ are open and disjoint. This means that $\X$ is covered by two (non-empty) family of countable balls with the property  that every ball in one family does not intersects any other ball in the other. This concludes the proof.
\end{proof}

\subsection{Application in the context of Menger
curvature}\label{s:ApplMenger}

Corollary \ref{c:FromLipToReg} allows to prove a version of
\cite[Theorem 3.11]{MR2342818} for metric spaces that are
quasiconvex, doubling and complete (but not necessarily geodesic).
The argument is based on the following result proved in
\cite[Theorem 1.1]{MR2297880} (see the comment around
\cite[(2.1)]{MR2297880}). Recall Section \ref{sec:glem} for the
definition of the Menger curvature $c(x_1,x_2,x_3).$

\begin{thm}[Hahlomaa]\label{thm:hah}
    There exists a universal constant $K_0>1$ such that the following holds.
    Let $(E,d)$ be a bounded 1-regular metric space with $E\in \reg_1(C)$, and such that
    \[
    \mathbf{c}(E)\coloneqq
    \int_{\mathcal F} c^2(x_1,x_2,x_3)d \mathcal H^1(x_1)d\mathcal H^1(x_2)d\mathcal H^1(x_3)<+\infty,
    \]
    where $\mathcal{F}\coloneqq \{(x_1,x_2,x_3) \in E \ : \  d(x_i,x_j)\le K_0 d(x_k,x_l), \, \text{ for all }i,j,k,l\in\{1,2,3\}, \, k\neq l  \}$.

    Then there exists  $A\subset [0,1]$ and a Lipschitz surjective function $f:A\to E$ such that
    \[
    {\rm Lip}(f)\le D(\mathbf{c}(E)+\diam(E)),
    \]
    where $D$ is a constant depending only on $C.$
\end{thm}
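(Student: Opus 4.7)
The plan is to construct the Lipschitz parametrization $f : A \to E$ via a multi-scale traveling-salesman argument: build successive approximations at dyadic scales and control the cumulative length by summing the triangular excesses incurred at each refinement, then use \eqref{eq:ExcessMenger} to bound the total excess by $\mathbf{c}(E)$.

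First I would set up the multi-scale skeleton. Let $k_0 \in \mathbb{Z}$ be such that $2^{-k_0} \sim \diam(E)$. For each $k \ge k_0$, choose a maximal $2^{-k}$-separated net $N_k \subset E$, arranged so that $N_k \subset N_{k+1}$. By the $1$-regularity of $E$ (and the associated doubling property),
\[
\# N_k \lesssim_C \diam(E) \cdot 2^{k}, \qquad \text{and} \qquad \overline{\bigcup_k N_k} = E.
\]
Starting with an arbitrary ordering of $N_{k_0}$, I would inductively refine it to an ordered enumeration (``path'') $\gamma_k$ of $N_k$.

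Second, I would perform the refinement from $\gamma_k$ to $\gamma_{k+1}$ by inserting each new point $p \in N_{k+1} \setminus N_k$ between two consecutive points $a_p, b_p$ of $\gamma_k$ chosen so that $d(a_p, p), d(b_p, p), d(a_p, b_p)$ are all comparable to $2^{-k}$ (up to a factor depending on $K_0$). Such $a_p, b_p$ exist by maximality of the net $N_k$ and by the $1$-regularity (provided $K_0$ is chosen large enough). The length of $\gamma_{k+1}$ then exceeds that of $\gamma_k$ by exactly $\partial_1(a_p, p, b_p)$ for each insertion.

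Third—the heart of the estimate—I would sum the added excesses across scales. The telescoping bound is
\[
\mathcal{H}^1(\gamma_\infty) \le \mathcal{H}^1(\gamma_{k_0}) + \sum_{k \ge k_0} \sum_{p \in N_{k+1} \setminus N_k} \partial_1(a_p, p, b_p),
\]
with $\mathcal{H}^1(\gamma_{k_0}) \lesssim \diam(E)$ since $\# N_{k_0} = O(1)$. Using \eqref{eq:ExcessMenger} on each triple $(a_p, p, b_p) \in \mathcal{F}$ and then the $1$-regularity of $E$ to pass from a discrete sum to an integral,
\[
\sum_{k, p} \partial_1(a_p, p, b_p) \lesssim_C \sum_{k, p} c^2(a_p, p, b_p) \cdot (2^{-k})^3 \lesssim_C \int_{\mathcal{F}} c^2(x_1,x_2,x_3) \, d\mathcal{H}^1(x_1) d\mathcal{H}^1(x_2) d\mathcal{H}^1(x_3) = \mathbf{c}(E).
\]
Finally, the sequence $\gamma_k$ can be realized consistently as images of an increasing family of subsets of an interval; passing to the limit and reparametrizing to $[0,1]$ yields the desired $f : A \to E$ with $\mathrm{Lip}(f) \lesssim_C \diam(E) + \mathbf{c}(E)$.

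The main obstacle is the third step: translating the discrete sum of excesses into an honest integral against $c^2$ requires that the selected triples $(a_p, p, b_p)$ cover $\mathcal{F}$ (or a fixed-density subset) with bounded overlap, which in turn forces a careful selection procedure for $a_p$ and $b_p$ that is compatible with the ordering on $\gamma_k$. A subtler issue is to ensure that the inductive insertions do not generate long detours when a single edge of $\gamma_k$ must absorb many children at scale $k+1$; this is typically handled by a further local reordering within each parent edge, whose cost is again controlled by the excesses of triples drawn from $\mathcal{F}$. Once these bookkeeping issues are settled, the argument reduces to a quantitative Fubini-type estimate built on the $1$-regularity of $E$.
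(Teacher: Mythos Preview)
The paper does not prove this theorem: it is quoted verbatim as Hahlomaa's result \cite[Theorem 1.1]{MR2297880} and used as a black box in the proof of Corollary~\ref{c:IntMen}. So there is no ``paper's own proof'' to compare against.

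Your sketch is in the spirit of Hahlomaa's actual argument (a farthest-insertion traveling-salesman construction), but the parts you flag as ``bookkeeping'' are in fact the substance. Two concrete points. First, the phrasing ``the selected triples $(a_p,p,b_p)$ cover $\mathcal{F}$ (or a fixed-density subset) with bounded overlap'' has the logic slightly inverted: you do not need to cover $\mathcal{F}$; rather, for each discrete triple at scale $2^{-k}$ you thicken each vertex to a ball of radius $\sim 2^{-k}$, use continuity of $c^2$ on comparable triples together with $1$-regularity to replace $c^2(a_p,p,b_p)(2^{-k})^3$ by a triple integral over those balls, and then show that the resulting integration regions have bounded overlap in $E^3$ across all scales and insertions. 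Second, the insertion rule ``between two consecutive points $a_p,b_p$ of $\gamma_k$ with distances comparable to $2^{-k}$'' is not automatically available: an edge of $\gamma_k$ adjacent to $p$ may already have been subdivided many times, and neighbouring net points need not be consecutive in $\gamma_k$. Hahlomaa handles both issues with a specific insertion algorithm and a careful counting argument; what you describe as a ``local reordering whose cost is again controlled by the excesses'' is exactly where the non-trivial work in \cite{MR2297880} lies, and your outline does not yet indicate how that control is obtained.
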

We will also need the following elementary result that allows to
localize the $s$-regularity condition.
\begin{proposition}[Localizing $s$-regular sets, Lemma 2.2 in \cite{MR4485846}]\label{prop:localize}
    Suppose that $E\in \reg_s(C)$ is a subset of a metric space $(\X,\sfd)$. Then for all $x\in E$ and
     $r\in (0,\diam(E))$ there exists a set $E_{x,r}\in \reg_s(\tilde C)$, where $\tilde C$ is a constant depending only on $s$ and $C$, such that
    \[
    B_r(x)\cap E\subset E_{x,r}\subset B_{3r}(x)\cap E.
    \]

\end{proposition}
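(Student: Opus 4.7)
The plan is to build $E_{x,r}$ as a union of Christ--David dyadic cubes at a level comparable to $r$. Let $\Delta$ be a dyadic system on $E$ given by Theorem and Definition~\ref{dl:dyadic}, and choose $j\in\mathbb{J}$ so that $2^{-j}\in(c_0r/2,\,c_0r]$, which forces $\diam(Q)\le c_0^{-1}2^{-j}\le r$ for every $Q\in\Delta_j$. Define
\[
E_{x,r}:=\bigcup\{Q\in\Delta_j:\;Q\cap B_r(x)\neq\emptyset\}.
\]
The inclusions $B_r(x)\cap E\subset E_{x,r}\subset B_{2r}(x)\cap E\subset B_{3r}(x)\cap E$ are immediate: the cubes of $\Delta_j$ partition $E$, while each cube in the union meets $B_r(x)$ and has diameter at most $r$. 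Edge cases where $r$ is so close to $\diam(E)$ that no such $j$ lies in $\mathbb{J}$ can be handled by simply taking $E_{x,r}=E$ and using $E\subset B_{\diam(E)}(x)\subset B_{3r}(x)$.

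The main task is then to verify $E_{x,r}\in\reg_s(\tilde C)$ with $\tilde C=\tilde C(s,C)$. The upper bound $\mathcal{H}^s(B_\rho(y)\cap E_{x,r})\lesssim\rho^s$ is inherited from $E\in\reg_s(C)$ since $E_{x,r}\subset E$. For the lower bound, fix $y\in E_{x,r}$ and $\rho\in(0,2\diam(E_{x,r}))$, and note that $\diam(E_{x,r})\leq 4r$. If $\rho\le r$, choose $j'>j$ with $2^{-j'}\in(c_0\rho/4,\,c_0\rho/2]$; letting $Q_y\in\Delta_j$ be the cube containing $y$ and $Q'\in\Delta_{j'}$ its descendant through $y$, the bound $\diam(Q')\le c_0^{-1}2^{-j'}\le\rho$ gives $Q'\subset B_\rho(y)\cap E_{x,r}$ together with $\mathcal{H}^s(Q')\gtrsim\rho^s$ from \eqref{eq:MeasCube}. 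If $\rho\ge 3r$, then since $y\in B_{2r}(x)$ we have $B_r(x)\subset B_\rho(y)$, so $\mathcal{H}^s(B_\rho(y)\cap E_{x,r})\ge\mathcal{H}^s(B_r(x)\cap E)\gtrsim r^s\sim\rho^s$. The remaining intermediate range $\rho\in(r,3r)$ is handled by applying the first case to the radius $\rho/4<r$.

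No deep obstacle arises; the argument is essentially routine bookkeeping once the right scale for the cubes is chosen. The only care needed is in treating the transitional range $\rho\sim r$, where one must switch between the two lower-bound strategies, and in keeping track of the dependence of $\tilde C$ on $s$ and $C$, which enters exclusively through the dyadic-system constant $c_0=c_0(s,C)$.
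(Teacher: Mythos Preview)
The paper does not give its own proof of this proposition; it simply cites \cite[Lemma 2.2]{MR4485846} and remarks that the same argument works under the slightly modified hypotheses here. Your approach via Christ--David cubes is the standard one and is almost certainly what the cited reference does.

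Your argument is essentially correct, with one oversight: the definition of $\reg_s(\tilde C)$ in this paper requires the set to be \emph{closed}, but a finite union of dyadic cubes (which are merely Borel) need not be. The fix is routine: replace your $E_{x,r}$ by its closure $\overline{E_{x,r}}$. Since $E$ is closed and $E_{x,r}\subset B_{2r}(x)\cap E$, one has $\overline{E_{x,r}}\subset \overline{B_{2r}(x)}\cap E\subset B_{3r}(x)\cap E$, while $B_r(x)\cap E\subset \overline{E_{x,r}}$ is immediate. Upper regularity of $\overline{E_{x,r}}$ still follows from $\overline{E_{x,r}}\subset E$; for lower regularity at a boundary point $y\in\overline{E_{x,r}}\setminus E_{x,r}$, pick $z\in E_{x,r}$ with $\sfd(y,z)<\rho/2$ and apply your lower bound at $z$ with radius $\rho/2$.

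A minor side remark: your edge-case discussion is not actually needed. For $r<\diam(E)$ one has $c_0 r<c_0\diam(E)<\diam(E)<2^{-n+1}$ (where $n=\min\mathbb{J}$ when $E$ is bounded), so the $j$ with $2^{-j}\in(c_0r/2,c_0r]$ automatically satisfies $j\ge n$.
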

In \cite[Lemma 2.2]{MR4485846}, only closed regular sets that are
regular at arbitrarily large scales are considered,
 however the same proof without changes works for the version that we reported above.

\begin{cor}[Integral Menger curvature condition]\label{c:IntMen}
    There exists a universal constant $K_0>1$ such that the following holds.
    Let $E\in \reg_1(C_E)$ be a subset of a complete, doubling and quasiconvex metric space $(\X,\sfd)$
    and suppose that there exists a constant $C>0$ such that
    \begin{equation}\label{eq:assumption}
        \int_{\mathcal F\cap (B_R(x))^3} c^2(x_1,x_2,x_3)d \mathcal H^1(x_1)d\mathcal H^1(x_2)d\mathcal H^1(x_3)\le CR,\quad  x \in E, \,
  R>0,
    \end{equation}
    where $\mathcal{F}\coloneqq \{(x_1,x_2,x_3) \in E^3 \ : \  d(x_i,x_j)\le K_0 d(x_k,x_l), \, \text{ for all }i,j,k,l\in\{1,2,3\}, \, k\neq l  \}$.
 Then $E$ is contained in a closed connected set  $\Gamma_0\in \reg_1(\tilde C_0)$
  with $\mathcal{H}^1(\Gamma_0)\le \tilde C \diam(E)$, where $\tilde C\ge1$ is a constant depending only on $C$, $C_E$ and the quasiconvexity constant of
  $(\X,\sfd)$, while
$\widetilde{C}_0$ is a constant that may additionally depend also
on the doubling constant of
 $(\X,\sfd)$.
\end{cor}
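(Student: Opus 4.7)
The plan is to verify the hypotheses of Corollary \ref{c:FromLipToReg} (together with its unbounded counterpart Corollary \ref{c:unbounded}) by producing, for every $x \in E$ and every $r > 0$, a set $A_{x,r} \subset [0,1]$ and a surjective $\lesssim r$-Lipschitz map $f_{x,r}\colon A_{x,r} \to E\cap B_r(x)$. Once this local structure is in place, Corollary \ref{c:FromLipToReg}/\ref{c:unbounded} directly delivers the closed connected $1$-regular covering set $\Gamma_0$ with the desired quantitative bound $\mathcal{H}^1(\Gamma_0) \le \widetilde{C}\diam(E)$ (the measure bound being vacuous when $E$ is unbounded).

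The construction of $f_{x,r}$ proceeds by localizing $E$ and then invoking Hahlomaa's Theorem \ref{thm:hah}. Fix $x \in E$ and $r > 0$. By Proposition \ref{prop:localize} there exists a set $E_{x,r} \in \reg_1(\widetilde{C})$ (with $\widetilde{C}$ depending only on $C_E$) satisfying
\[
E\cap B_r(x) \subset E_{x,r} \subset E\cap B_{3r}(x).
\]
In particular $E_{x,r}$ is a bounded $1$-regular metric space with $\diam(E_{x,r}) \le 6r$. Moreover, since every triple in $E_{x,r}^3$ which is ``comparable'' (in the sense of the set $\mathcal{F}$ appearing in Theorem \ref{thm:hah}) automatically lies in $\mathcal{F}\cap (B_{3r}(x))^3$ from hypothesis \eqref{eq:assumption}, the integral curvature quantity $\mathbf{c}(E_{x,r})$ is bounded by $3Cr$. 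Applying Theorem \ref{thm:hah} to $E_{x,r}$ then yields a set $A \subset [0,1]$ and a surjective Lipschitz map $f\colon A \to E_{x,r}$ with
\[
\mathrm{Lip}(f) \le D\bigl(\mathbf{c}(E_{x,r}) + \diam(E_{x,r})\bigr) \le D(3C + 6)r,
\]
where $D$ depends only on $\widetilde{C}$, hence only on $C_E$.

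To conclude the construction, set $A_{x,r} \coloneqq f^{-1}(E\cap B_r(x)) \subset A$ and $f_{x,r} \coloneqq f|_{A_{x,r}}$. Since $E\cap B_r(x) \subset E_{x,r}$ and $f$ is surjective, $f_{x,r}$ is a surjection from $A_{x,r} \subset [0,1]$ onto $E\cap B_r(x)$, and its Lipschitz constant is controlled by $\lesssim r$ with an implicit constant depending only on $C$ and $C_E$. These are exactly the hypotheses required by Corollary \ref{c:FromLipToReg}; the unbounded case of $E$ is handled by invoking Corollary \ref{c:unbounded}.

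The only genuinely delicate point in the argument is the passage from the integral Menger curvature bound on $E$ to one on the localized piece $E_{x,r}$, but this is essentially automatic once Proposition \ref{prop:localize} is in place, because the localized set is contained in $E\cap B_{3r}(x)$ and the triangular comparability condition defining $\mathcal{F}$ depends only on the restricted metric. All of the geometric/combinatorial work has already been carried out in Hahlomaa's theorem (which provides the local Lipschitz parametrizations) and in Corollary \ref{c:FromLipToReg} together with Corollary \ref{c:unbounded} (which assembles local Lipschitz coverings into a single closed connected $1$-regular set), so the proof essentially amounts to combining these three ingredients.
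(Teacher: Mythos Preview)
Your proposal is correct and follows essentially the same approach as the paper: localize via Proposition \ref{prop:localize}, apply Hahlomaa's Theorem \ref{thm:hah} to the localized piece $E_{x,r}$, restrict the resulting Lipschitz map to $f^{-1}(E\cap B_r(x))$, and feed this into Corollary \ref{c:FromLipToReg} (together with Corollary \ref{c:unbounded} in the unbounded case). The only minor point to watch is that Proposition \ref{prop:localize} is stated for $r\in(0,\diam(E))$, so when $E$ is bounded and $r=\diam(E)$ the paper simply applies Theorem \ref{thm:hah} directly to all of $E$ rather than first localizing; this is a trivial adjustment.
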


\begin{proof}
We want to apply Corollary \ref{c:FromLipToReg} (recall that by
Corollary \ref{c:unbounded} it also holds for unbounded $K$, with
the same statement). To build the sets $A_{x,r}$ and the
maps $f_{x,r}$ required in its statement, we combine Theorem
\ref{thm:hah} and Proposition \ref{prop:localize}.

If $E$ is
bounded, for every $x\in E$ and $r=\mathrm{diam}(E)$, we can take
directly $A_{x,r}=A$ and $f=f_{x,r}$ given by Theorem
\ref{thm:hah}.

For $E$ \textcolor{black}{bounded or unbounded and}
$r\in(0,\diam(E))$, by Proposition \ref{prop:localize} there
exists a  set $E_{x,r}\subset E$  with $E_{x,r}\in \reg_1(\tilde
C_E)$ satisfying
$$B_r(x)\cap E\subset E_{x,r}\subset B_{3r}(x)\cap E$$
and with $\tilde C_E$ a constant   depending only $C_E$. We have
\begin{align*}
    &\int_{\mathcal F\cap (E_{x,r})^3} c^2(x_1,x_2,x_3)d \mathcal H^1(x_1)d\mathcal H^1(x_2)d\mathcal H^1(x_3)\\
    &\le \int_{\mathcal F\cap (B_{3r}(x))^3} c^2(x_1,x_2,x_3)d \mathcal H^1(x_1)d\mathcal H^1(x_2)d\mathcal H^1(x_3)\le 3Cr.
\end{align*}
Therefore we can apply Theorem \ref{thm:hah} to the metric space $(E_{x,r},d|_{E_{x,r}})$
to obtain a surjective Lipschitz map $\tilde f_{x,r}:\tilde A_{x,r}\to E_{x,r}$ for some $\tilde A_{x,r}\subset[0,1]$ and with
$${\rm Lip}(\tilde f_{x,r})\le D(3Cr+\diam(E_{x,r}))\le  D(3Cr+6r),$$
where $D$ is a constant depending only on $C_E$. Hence ${\rm
Lip}(\tilde f_{x,r})\le \tilde C r$, where $\tilde C$ is constant
depending only on $C$ and $C_E$. Since $B_r(x)\cap E\subset
E_{x,r}$ we immediately see that the set $A_{x,r}\coloneqq \tilde
f_{x,r}^{-1}(B_r(x)\cap E)$ and the map $f_{x,r}\coloneqq \tilde
f_{x,r}|_{A{x,r}}$ have the required properties and an application
of \textcolor{black}{Corollary \ref{c:unbounded}}
yields the result.
\end{proof}

Our initial motivation for Corollary \ref{c:IntMen} was to provide
details for the first step in the proof of one of the main results
in \cite{MR3678492} concerning singular integral operators in the
first \emph{Heisenberg group} $\mathbb{H}^1$.
\textcolor{black}{The group $\mathbb{H}^1=(\mathbb{R}^{3},\cdot)$
is defined by the product
\begin{displaymath}
(x,t) \cdot (x',t')= \Big(x_1+x_1',x_{2}+x_{2'},t+t'+
\omega(x,x')\Big),\quad (x,t),(x',t')\in \mathbb{R}^{2}\times
\mathbb{R},
\end{displaymath}
where
\begin{displaymath}
\omega(x,x'):= \tfrac{1}{2} [ x_1 x_{2}'-x_{2}x_1'],\quad x,x'\in
\mathbb{R}^{2}.
\end{displaymath}
The
left-invariant \emph{Kor\'{a}nyi metric} on $\mathbb{H}^1$ is
defined by
\begin{displaymath}
d_{\mathbb{H}^1}(p,p'):=\|p^{-1}\cdot
p'\|_{\mathbb{H}^1},\quad\text{where}\quad
\|(x,t)\|_{\mathbb{H}^1}:= \sqrt[4]{|x|^4 + 16
t^2},
\end{displaymath}
where $|\cdot|$ denotes the Euclidean norm on $\mathbb{R}^2$.}

The Theorem 1.3 in \cite{MR3678492} states the following:

\begin{thm}[Chousionis, Li]\label{t:CL} Let $K:\mathbb{H}^1 \setminus \{0\}\to
[0,\infty)$ be defined by
\begin{displaymath}
K(p)=\frac{\Omega(p)^2}{\|p\|_{\mathbb{H}^1}},\qquad
\text{where}\quad \Omega(x,y,t):=
\frac{|(x,y)|^{1/2}}{\|(x,y,t)\|_{\mathbb{H}^1}},
\end{displaymath}
and let $E\subset \mathbb{H}^1$ be a $1$-regular set. If the
truncated singular integrals \begin{displaymath}
T_{\varepsilon}f(p)=\int_{E\setminus
B(p,\varepsilon)}K(q^{-1}\cdot p)f(q)\,d\mathcal{H}^1(q)
\end{displaymath}
are uniformly bounded in $L^2(\mathcal{H}^1\lfloor E)$, then $E$
is contained in a $1$-regular curve.
\end{thm}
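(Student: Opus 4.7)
The plan is to reduce Theorem \ref{t:CL} to the integral Menger curvature condition \eqref{eq:assumption} and then invoke Corollary \ref{c:IntMen}. The Heisenberg group $(\mathbb{H}^1,d_{\mathbb{H}^1})$ is complete, doubling, and quasiconvex, so Corollary \ref{c:IntMen} is applicable and delivers a closed connected set $\Gamma_0\in\mathrm{Reg}_1(\widetilde{C}_0)$ containing $E$; by the discussion opening Section \ref{s:ConstrCover} (see \cite[Lemma 2.8]{2021arXiv210906753B}) such a set is automatically a $1$-regular Lipschitz curve. Hence the entire task is to deduce \eqref{eq:assumption} from the hypothesis on $\{T_\varepsilon\}_{\varepsilon>0}$.

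The bridge between the $L^2$ boundedness and Menger curvature is a symmetrization identity of Melnikov--Verdera type, which is the core algebraic input of \cite{MR3678492}. Namely, the precise shape of $K$, with the weight $\Omega(p)^2=|(x,y)|/\|(x,y,t)\|_{\mathbb{H}^1}^2$, is chosen so that there exists a universal constant $K_0>1$ with
\[
\sum_{\sigma\in S_3} K(p_{\sigma(2)}^{-1}\cdot p_{\sigma(1)})\, K(p_{\sigma(3)}^{-1}\cdot p_{\sigma(1)}) \gtrsim c^2(p_1,p_2,p_3), \qquad (p_1,p_2,p_3)\in \mathcal{F},
\]
where $\mathcal{F}$ is the set of triples at mutually $K_0$-comparable Kor\'anyi distances and $c$ is the Menger curvature of the Euclidean realization of the triple. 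Assuming this identity, the standard $T(1)$-type reasoning applied to $\chi_{B_R(x)\cap E}$ produces, from uniform $L^2$ boundedness of $T_\varepsilon$, the bound
\[
\int_{B_R(x)\cap E}|T_\varepsilon \chi_{B_R(x)\cap E}|^2\,d\mathcal{H}^1 \lesssim \mathcal{H}^1(B_R(x)\cap E)\lesssim R,
\]
uniformly in $\varepsilon\in(0,R)$. Expanding the square, applying Fubini, symmetrizing over the six orderings of the integration variables, and using the lower bound above (the diagonal and near-diagonal contributions being absorbed by the $1$-regularity of $E$ and the local integrability of $K$ off the diagonal) converts this into \eqref{eq:assumption}, possibly after enlarging $K_0$ to match the one appearing in Corollary \ref{c:IntMen}.

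The main obstacle is the symmetrization identity in $\mathbb{H}^1$: in the Euclidean case this is a one-line algebraic computation due to Melnikov, but in the sub-Riemannian setting the non-commutativity of the group law forces a delicate choice of $K$ and a genuinely non-trivial verification of the comparison with $c^2$ on $\mathcal{F}$. This step is essentially the content of \cite{MR3678492}, whereas the passage from the resulting integral Menger bound to a global $1$-regular covering continuum --- previously left as a nontrivial gap in the quasiconvex (non-geodesic) setting --- is precisely what Corollary \ref{c:IntMen} provides.
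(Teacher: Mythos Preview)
Your proposal is correct and matches the paper's own discussion: the paper does not give an independent proof of Theorem~\ref{t:CL} but explains that the argument in \cite{MR3678492} reduces the statement to the integral Menger curvature bound \eqref{eq:assumption} via a Melnikov--Verdera-type symmetrization, and that Corollary~\ref{c:IntMen} then supplies the previously missing step of producing a global $1$-regular covering continuum in the (quasiconvex, non-geodesic) Kor\'anyi setting. One minor caveat: your appeal to the discussion at the beginning of Section~\ref{s:ConstrCover} to upgrade $\Gamma_0$ to a Lipschitz \emph{curve} requires $\mathcal{H}^1(\Gamma_0)<\infty$, hence $E$ bounded; for unbounded $E$ one gets a $1$-regular closed connected set, which is the appropriate conclusion in that case.
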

The proof of this result in \cite{MR3678492} starts with the
observation that it suffices to show for some $\alpha>0$ that
\begin{displaymath}
\int\int\int_{\Sigma(\alpha)\cap
B(p,R)^3}c^2(p_1,p_2,p_3)\,d\mathcal{H}^1(p_1)\,d\mathcal{H}^1(p_2)\,d\mathcal{H}^1(p_3)\lesssim
R,\quad p\in E,\,R>0,
\end{displaymath}
where
\begin{displaymath}
\Sigma(\alpha):= \bigcup_{r>0} \{(p_1,p_2,p_3)\in E\times E\times
E\colon \alpha r \leq d_{\mathbb{H}^1}(p_i,p_j)\leq r,\,i\neq j\}
\end{displaymath}
and the Menger curvature $c$ is computed with respect to $d_{\mathbb{H}^1}$. The
authors refer to \cite[p.123]{MR2297880}, however from this
statement it was  not immediately clear to us how to obtain the
\emph{$1$-regular curve} containing $E$, whose existence is
claimed in Theorem \ref{t:CL}. A good indication is provided by
\cite[Theorem 3.11]{MR2342818}, where Schul stated without proof a
version of Hahlomaa's result \cite{MR2297880} which he derived
from the arguments in \cite{MR2297880}. Corollary \ref{c:IntMen}
is a generalization of \cite[Theorem 3.11]{MR2342818}: it works
not only for geodesic, but for quasiconvex spaces, and also for
unbounded sets $E$. These relaxed assumptions are crucial for the
application in the proof of \cite[Theorem 1.3]{MR3678492}, where
 $\mathbb{H}^1$ is endowed with the (non-geodesic but
quasiconvex) Kor\'{a}nyi distance $d_{\mathbb{H}^1}$. In addition
to this greater flexibility, we believe that it is valuable to
have all the details for the proof of Corollary \ref{c:IntMen} (or
\cite[Theorem 3.11]{MR2342818}) available  as a
combination of published work by Hahlomaa and the arguments we
provide in this paper.

\section{Uniformly 1-rectifiable subsets of metric
spaces}\label{s:Char1UR}

The goal of this section is to characterize uniformly
$1$-rectifiable sets in terms of the $\nb$-numbers introduced in
Definition \ref{d:alpha}. What does uniform $1$-rectifiability
mean in this context? In Euclidean spaces, a $1$-regular set is
rightfully called \emph{uniformly $1$-rectifiable} if it is
contained in a $1$-regular curve since this property is equivalent
with many other notions of quantitative rectifiability that make
sense also for higher dimensional sets \cite{MR1113517,MR1251061}.
The main result of this section, Theorem \ref{t:Char1URII},
together with Corollary \ref{eq:equiv1UR}, motivates an analogous
definition of uniform $1$-rectifiability in a large class of
metric spaces. A combination of these results was stated as
Theorem \ref{t:Char1URIntro} in the
introduction.

Corollary \ref{eq:equiv1UR} is based on relations between regular
curves, Lipschitz images and the $\kappa$-numbers from Example
\ref{ex:MetricBetas}. While these connections are in essence due
to
 Hahlomaa \cite{MR2297880} in one direction and Schul
\cite{MR2337487} in the other direction, the novelty here is the
construction of a closed and connected $1$-regular \emph{global}
covering set based on the other equivalent characterizations. To
make this implication rigorous in arbitrary complete, doubling,
and quasiconvex metric spaces, we will need the results from
Section \ref{s:SuffRegCoverCurve} and in particular we will apply
Hahlomaa's result in the form of Corollary \ref{c:IntMen}. The
proof of Theorem \ref{t:Char1URII} takes up most space in this
section and at the core of it lies Theorem \ref{thm:l1 menger} by
which we can control the new $\nb$-numbers from above by the more
familiar $\kappa$-numbers.

\subsection{Characterization following Hahlomaa and Schul}\label{ss:HahlomaaSchul}

\begin{thm}[based on \cite{MR2297880,MR2337487}]\label{t:Char1URI} Let $(\X,\sfd)$ be a complete, doubling,
and quasiconvex metric space. Then a $1$-regular set $E\subset \X$
is contained in a $1$-regular closed and connected set $\Gamma_0$
if and only if for all $z\in E$ and $R>0$,
\begin{equation}\label{eq:triple}
\int\int\int_{(E\cap B_R(z))^3}
\frac{\partial(\{x_1,x_2,x_3\})}{\mathrm{diam}\{x_1,x_2,x_3\}^3}
\,d\mathcal{H}^1(x_1)d\mathcal{H}^1(x_2)d\mathcal{H}^1(x_3)\leq C
R
\end{equation} for some constant $C \geq 1$.

Moreover, if $E$ is bounded, then the statement also holds with
``closed and connected set'' replaced by ``curve'', and if
\eqref{eq:triple} holds, we can choose $\Gamma_0\in
\mathrm{Reg}_1(\widetilde{C})$ such that $\diam(\Gamma_0)\leq
\widetilde{C}\diam(E)$, where $\widetilde{C}$ depends only on $C$,
the $1$-regularity constant of $E$ and the doubling and
quasiconvexity constants of $(\X,\sfd)$. Conversely, if $E$ is
contained in a $1$-regular closed and connected set $\Gamma_0$,
then the constant $C$ in \eqref{eq:triple} can be bounded in terms
of the $1$-regularity constant of $\Gamma_0$ and the
quasiconvexity constant of $(\X,\sfd)$.
\end{thm}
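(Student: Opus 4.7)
The plan is to prove the two implications separately, linking the triangular excess integrand of \eqref{eq:triple} to the Menger curvature via \eqref{eq:ExcessMenger}, and then invoking Corollary \ref{c:IntMen} for sufficiency and Schul's estimates for regular curves \cite{MR2337487} for necessity.

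For sufficiency, my idea is to restrict the triple integral in \eqref{eq:triple} to the set $\mathcal{F}$ of triples with pairwise distances comparable up to the universal constant $K_0$ from Corollary \ref{c:IntMen}. On $\mathcal{F}$, the relation \eqref{eq:ExcessMenger} gives $c^2(x_1,x_2,x_3) \sim \partial(\{x_1,x_2,x_3\})/\mathrm{diam}\{x_1,x_2,x_3\}^3$, so since the integrand is nonnegative, the assumed bound \eqref{eq:triple} directly yields
\[
\int_{\mathcal{F}\cap (E\cap B_R(z))^3} c^2(x_1,x_2,x_3)\, d\mathcal{H}^1(x_1)\,d\mathcal{H}^1(x_2)\,d\mathcal{H}^1(x_3) \lesssim C R, \qquad z\in E,\ R>0,
\]
which is precisely hypothesis \eqref{eq:assumption}. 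Invoking Corollary \ref{c:IntMen} produces a closed connected set $\Gamma_0 \in \mathrm{Reg}_1(\widetilde{C}_0)$ containing $E$, with $\mathcal{H}^1(\Gamma_0)\leq \widetilde{C}\,\mathrm{diam}(E)$ in the bounded case. In the bounded case, $\Gamma_0$ is automatically a $1$-regular Lipschitz curve by \cite[Lemma 2.8]{2021arXiv210906753B} and \cite[Lemma 2.3]{MR2337487}, and the diameter bound $\mathrm{diam}(\Gamma_0)\leq \widetilde{C}\,\mathrm{diam}(E)$ then follows by combining the $\mathcal{H}^1$-estimate with the lower $1$-regularity of $\Gamma_0$.

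For necessity, I would first reduce to the case $E=\Gamma_0$: since the integrand in \eqref{eq:triple} depends only on the triple and is nonnegative, and $E\cap B_R(z)\subset \Gamma_0\cap B_R(z)$, it suffices to prove the bound over $\Gamma_0$. Because $\Gamma_0$ is a $1$-regular closed connected set, it is a $1$-regular Lipschitz curve, so the required bound on $\iiint \partial/\mathrm{diam}\{\cdot\}^3$ over balls in $\Gamma_0$ is a known consequence of Schul's analysis of Menger curvature on $1$-regular curves \cite{MR2337487}; the quasiconvexity of $(\X,\sfd)$ enters only through the comparison between the ambient and intrinsic metrics on the curve and affects only the constant.

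The main obstacle is sufficiency, which is entirely encapsulated in Corollary \ref{c:IntMen}. The latter combines Hahlomaa's theorem \cite[Theorem 1.1]{MR2297880}, which produces local Lipschitz parameterizations of $1$-regular sets with controlled Menger curvature, together with the gluing machinery of Section \ref{s:SuffRegCoverCurve} that upgrades such local coverings into a single global closed connected $1$-regular set. This gluing step, carried out via the minimizing-continuum argument of Theorem \ref{t:FromConnToReg} in the bounded case and the spiral covering of Lemma \ref{lem:spiral convering} in the unbounded case, is precisely what allows us to dispense with both the geodesic hypothesis and the boundedness of $E$ present in earlier formulations such as \cite[Theorem 3.11]{MR2342818}.
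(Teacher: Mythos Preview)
Your proposal is correct and follows essentially the same route as the paper: sufficiency via the restriction to $\mathcal{F}$, the comparison \eqref{eq:ExcessMenger}, and Corollary~\ref{c:IntMen}; necessity via \cite[Theorem~1.10]{MR2337487} applied to $\Gamma_0$ and then restricted to $E\subset\Gamma_0$. One small inaccuracy: in the necessity direction you assert that a $1$-regular closed connected set is automatically a $1$-regular Lipschitz curve, but this requires finite $\mathcal{H}^1$-measure (boundedness); fortunately Schul's result applies directly to connected $1$-regular sets, so the intermediate curve claim is unnecessary.
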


\begin{proof} We begin by discussing the first part of the
theorem. Assume that \eqref{eq:triple} holds for a $1$-regular set
$E \subset \X$ with regularity constant $C_E$. The goal is to
apply Corollary \ref{c:IntMen} to show that $E$ is contained in a
closed and connected $1$-regular set $\Gamma_0$. To verify the
assumptions for $E$, let $K_0>1$ be the universal constant from
Corollary \ref{c:IntMen} and denote, as before,
\begin{displaymath}
\mathcal{F}=\{(x_1,x_2,x_3) \in E\times E \times E \ : \
d(x_i,x_j)\le K_0 d(x_k,x_l), \, \text{ for all
}i,j,k,l\in\{1,2,3\}, \, k\neq l \}.
\end{displaymath}
By making the domain of integration for the triple integral in
\eqref{eq:triple} possibly smaller, we deduce from the assumption
that
\begin{equation}\label{eq:***_F}
\int\int\int_{\mathcal{F}\cap (B_R(z))^3}
\frac{\partial(\{x_1,x_2,x_3\})}{\mathrm{diam}\{x_1,x_2,x_3\}^3}
\,d\mathcal{H}^1(x_1)d\mathcal{H}^1(x_2)d\mathcal{H}^1(x_3)\leq C
R,\quad z\in E,\,R>0.
\end{equation}
By the equivalence between triangular excess and Menger curvature
stated in \eqref{eq:ExcessMenger} for triples in $\mathcal{F}$,
\eqref{eq:***_F} yields
\begin{equation*}
\int\int\int_{\mathcal{F}\cap (B_R(z))^3} c^2(x_1,x_2,x_3)
\,d\mathcal{H}^1(x_1)d\mathcal{H}^1(x_2)d\mathcal{H}^1(x_3)\lesssim_{K_0}
C R,\quad z\in E,\,R>0.
\end{equation*}
Then Corollary \ref{c:IntMen} shows that there exists a closed and
connected $1$-regular subset $\Gamma_0\in
\mathrm{Reg}_1(\widetilde{C})$ of $(\X,\sfd)$ containing $E$ for
$\widetilde{C}$ as in the statement of the theorem. If $E$ is
bounded, then
 Corollary \ref{c:IntMen}
yields further that  $\mathcal{H}^1(\Gamma_0)\leq
\widetilde{C}\mathrm{diam}(E)<\infty$. Since $(\X,\sfd)$ is
complete, such $\Gamma_0$ is automatically compact, and posteriori
the trace of a $1$-regular Lipschitz curve, recall the discussion
at the beginning of Section \ref{s:ConstrCover}.

\medskip

The other implication was known before. Indeed, assume that $E$ is
contained in a closed and connected $1$-regular subset $\Gamma_0$.
Then \cite[Theorem 1.10]{MR2337487} (see also \cite[Thm 3.12 and
(3.9)]{MR2342818}) shows that \eqref{eq:triple} holds with ``$E$''
replaced by ``$\Gamma_0$''. (The proof argument in  \cite[Section
3]{MR2337487} to deduce Theorem 1.10 from Theorem 1.8 therein
seems to be formulated under the implicit assumption that the
ambient space is geodesic, but the argument works analogously for
quasiconvex spaces, with the implicit constant in
\eqref{eq:triple} depending on the quasiconvexity constant of
$(\X,\sfd)$ in addition to the $1$-regularity constant of
$\Gamma_0$.)

As inequality \eqref{eq:triple}  remains true if the domain of
integration is replaced by a smaller set, \eqref{eq:triple} also
holds for the ($\mathcal{H}^1$-measurable) $E\subset \Gamma_0$.

\end{proof}

Theorem \ref{t:Char1URI}  gives a characterization for a
$1$-regular set $E$ to be contained in a closed and connected
$1$-regular curve. The next result, Corollary \ref{eq:equiv1UR},
provides further justification for calling such sets
\emph{uniformly $1$-rectifiable}. We recall the relevant
terminology.

\begin{definition}\label{d:BPLI} Let $k\in \mathbb{N}$.
A $k$-regular set $E$ in a metric space $(\X,\sfd)$ has \emph{big
pieces of Lipschitz images (BPLI)} if there exist constants
$c,L>0$ such that for every $x\in E$, $0<r<\mathrm{diam}(E)$,
there exists an $L$-Lipschitz function $f:A \to E$, where $A$ is a
subset of the Euclidean ball $B_r(0)\subset \mathbb{R}^k$, and
$\mathcal{H}^k(f(A)\cap B_r(x))\geq c r^k$.
\end{definition}

\begin{definition}\label{d:BPBI}
 Let $k\in \mathbb{N}$.
A $k$-regular set $E$ in a metric space $(\X,\sfd)$ has \emph{big
pieces of bi-Lipschitz images (BPBI)} if there exist constants
$c,L>0$ such that for every $x\in E$, $0<r<\mathrm{diam}(E)$,
there exists an $L$-bi-Lipschitz \textcolor{black}{embedding} $f:A
\to E$, where $A$ is a subset of the Euclidean ball $B_r(0)\subset
\mathbb{R}^k$, and $\mathcal{H}^k(f(A)\cap B_r(x))\geq c r^k$.
\end{definition}

\begin{cor}\label{eq:equiv1UR} Let $(\X,\sfd)$ be a complete, doubling,
quasiconvex metric space and let $E\subset \X$ be $1$-regular. The following conditions are
equivalent:
\begin{enumerate}
\item\label{i:UR} $E$ is contained in a closed and connected
$1$-regular set $\Gamma_0$,
\item\label{ii:UR} $E$ has BPLI,
\item\label{iii:UR} $E$ has BPBI,
\item\label{v:UR} $E\in \mathrm{GLem}(\kappa,1)$, for $\kappa$ as in Example \ref{ex:MetricBetas}.
\end{enumerate}
Moreover, if $E$ is bounded, these conditions are further
equivalent to $E$ being contained in a $1$-regular \emph{curve}.
Also, if \eqref{i:UR} holds, then \eqref{ii:UR} holds with
BPLI constants depending only on the $1$-regularity constants
of $E$ and $\Gamma_0$, as well as the doubling and quasiconvexity
constants of $(\X,\sfd)$. Conversely, if \eqref{v:UR} holds for
$E\in \mathrm{GLem}(\kappa,1,M)\cap \mathrm{Reg}_1(C)$, then
\eqref{i:UR} holds with $\Gamma_0 \in
\mathrm{Reg}_1(\widetilde{C})$, where $\widetilde{C}$ depends only
on $M$, $C$, and the doubling and quasiconvexity constants of
$(\X,\sfd)$.
\end{cor}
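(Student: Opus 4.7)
The equivalences \eqref{ii:UR} $\Leftrightarrow$ \eqref{iii:UR} $\Leftrightarrow$ \eqref{v:UR} are already established by Schul in \cite{2007arXiv0706.2517S}, building on earlier work of Hahlomaa \cite{MR2297880} and himself. Hence it suffices to connect \eqref{i:UR} with the remaining conditions. My plan is to route this connection through the integral Menger-type condition \eqref{eq:triple}, showing that both \eqref{i:UR} and \eqref{v:UR} are quantitatively equivalent to it. The equivalence \eqref{i:UR} $\Leftrightarrow$ \eqref{eq:triple} is exactly Theorem~\ref{t:Char1URI}, so the remaining task is to prove \eqref{v:UR} $\Leftrightarrow$ \eqref{eq:triple}.

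For this, I will carry out a standard dyadic unwinding. Fix a dyadic system $\Delta$ on $E$. Since $E$ is $1$-regular, $\mu(2Q)\sim\mathrm{diam}(2Q)\sim\ell(Q)$, so by definition
\[
\kappa(2Q)\,\mu(Q) \;\sim\; \frac{1}{\ell(Q)^{3}} \int\int\int_{(2Q)^{3}} \partial(\{x_{1},x_{2},x_{3}\}) \, d\mu(x_1)\,d\mu(x_2)\,d\mu(x_3).
\]
Summing over $Q\in\Delta_{Q_{0}}$ and applying Fubini, one is led to estimate, for each fixed triple $(x_{1},x_{2},x_{3})\in E^{3}$ of diameter $r>0$, the quantity $\sum_{Q\in\Delta_{Q_0}:\,x_i\in 2Q\,\forall i}\ell(Q)^{-3}$. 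At each dyadic scale $\ell\gtrsim r$ only $O(1)$ cubes can contain all three points in their $2Q$-neighborhood (by the doubling property and the fact that $2Q$ has diameter $\sim\ell$), while no smaller scales contribute, so the sum collapses to $\sim r^{-3}$. This identifies the summand of the Carleson sum with the integrand $\partial/\mathrm{diam}^{3}$ of \eqref{eq:triple}. The passage between balls $B_{R}(z)$ and dyadic cubes $Q_{0}$ of comparable scale is standard, using \eqref{eq:ball_enlarged_cube} and \eqref{eq:cube patch}, and gives the claimed quantitative equivalence in both directions.

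Combining these two equivalences closes the cycle and, together with Schul's result, gives the corollary. The constants propagate through the chain as stated: the dyadic unwinding depends only on the $1$-regularity constant of $E$, and Theorem~\ref{t:Char1URI} then introduces the doubling and quasiconvexity constants of $(\X,\sfd)$. The ``moreover'' assertion about bounded $E$ being covered by a $1$-regular \emph{curve} (rather than merely a closed connected $1$-regular set) follows directly from the corresponding conclusion of Theorem~\ref{t:Char1URI}.

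I expect the most delicate step to be the bookkeeping in the dyadic unwinding, especially at the extremal scales: when the diameter $r$ of the triple is comparable to the localization radius $R$, only a few dyadic cubes contribute to the sum, and the localization of the integral in \eqref{eq:triple} to $B_{R}(z)$ must be carefully compared with the Carleson sum over $\Delta_{Q_{0}}$ for a suitable $Q_{0}$ of side length $\sim R$. Standard dyadic techniques handle this, but care is needed to preserve the quantitative constant dependencies listed in the statement.
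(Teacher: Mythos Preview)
Your proposal is correct and shares the paper's two key ingredients: invoking Theorem~\ref{t:Char1URI} for the equivalence \eqref{i:UR}~$\Leftrightarrow$~\eqref{eq:triple}, and a dyadic comparison between \eqref{eq:triple} and $\mathrm{GLem}(\kappa,1)$. The organizational difference is that the paper proves \eqref{i:UR}~$\Rightarrow$~\eqref{ii:UR} \emph{directly}: it localizes $E$ via Proposition~\ref{prop:localize}, observes that the localized piece inherits \eqref{eq:triple} from $\Gamma_0$ (by Schul's \cite[Theorem 1.10]{MR2337487}), applies Theorem~\ref{t:Char1URI} to produce a local $1$-regular covering \emph{curve} $\Gamma_{x,r}$, and then parametrizes that curve to exhibit the BPLI map explicitly. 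Consequently the paper only needs the one-sided dyadic estimate \eqref{v:UR}~$\Rightarrow$~\eqref{eq:triple} (its inequality \eqref{eq:IntSum}, obtained by decomposing the triple integral over dyadic annuli $A_j$ rather than by Fubini). You instead route \eqref{i:UR}~$\Rightarrow$~\eqref{ii:UR} through \eqref{v:UR} and Schul's black box, at the cost of needing the reverse dyadic direction \eqref{eq:triple}~$\Rightarrow$~\eqref{v:UR} as well. Your Fubini argument handles both directions cleanly, though note that the lower bound $\sum_Q \ell(Q)^{-3}\gtrsim r^{-3}$ may require $KQ$ rather than $2Q$ (a triple of diameter $r$ need not sit inside $2Q$ for any $Q$ at scale $\sim r$); this is harmless by Lemma~\ref{l:coeff}, which you implicitly invoke. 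The quantitative dependencies track correctly through either route.
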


We call a $1$-regular set $E$ \emph{uniformly $1$-rectifiable} if
it satisfies one (and thus all) of the properties
\eqref{i:UR}-\eqref{v:UR}. The equivalence of
\eqref{ii:UR}--\eqref{iii:UR} was established by Schul in
\cite[Corollary 1.2]{MR2554164},  while \cite[Corollary
1.3]{2007arXiv0706.2517S} states a third equivalent condition that
is very similar
 to \eqref{v:UR},
but formulated in terms of \emph{multiresolution families} instead
of dyadic systems as we used in the definition of the geometric
lemma. However, the equivalence of these two versions of the
Carleson condition is standard, see the beginning of the proof of
\cite[Theorem 1.1]{MR2337487}. \textcolor{black}{For the
convenience of the reader, we show how the implication
\eqref{iii:UR} to \eqref{v:UR} can be deduced from published
results. This is the content of Proposition \ref{p:BPBItoGLem}.}

\begin{proposition}\label{p:BPBItoGLem}
    \textcolor{black}{Let $(\X,\sfd)$ be a
    complete, doubling, quasiconvex metric space and let $E\subset \X$ be $1$-regular. If $E$ has BPBI, then $E\in \mathrm{GLem}(\kappa,1)$, for $\kappa$ as in Example \ref{ex:MetricBetas}.}
\end{proposition}

\begin{proof}
\textcolor{black}{The proof consists of three steps.}
\textcolor{black}{First we  show that connected $1$-regular sets
in metric spaces satisfy $\mathrm{GLem}(\kappa,1)$. This is
essentially the content of  \cite[Theorem 1.11]{MR2337487}, but
the latter is formulated in terms of \emph{multiresolution
families} instead of dyadic systems. Indeed, \cite[Theorem
1.11]{MR2337487} states that for every connected set $\Gamma\in
\mathrm{Reg}_1(C)$ in a metric space and any associated
multiresolution family, for $z\in \Gamma$ and $R>0$,
\begin{equation}\label{eq:GLemSchul}
\sum_{\substack{{B\in \mathcal{G}^{\Gamma}}\\{B\subset B_R(z)}}}\int_B\int_B\int_B \partial(\{x_1,x_2,x_3\})\mathrm{rad}(B)^{-3}\,d\mathcal{H}^1|_{\Gamma}(x_1)d\mathcal{H}^1|_{\Gamma}(x_2)d\mathcal{H}^1|_{\Gamma}(x_3)\lesssim R,
\end{equation}
where the implicit constant depends on $C$ and the constant ``$A$'' in the definition of the multiresolution family $\mathcal{G}^{\Gamma}$. Recall that a multiresolution family is given by
\begin{displaymath}
    \mathcal{G}^\Gamma \coloneqq \{B_{A 2^{-n}}(x):\, x\in X_n^{\Gamma},\,n\in \mathbb{N} \},
\end{displaymath}
where  $A>1$ is a chosen constant and $X_n^{\Gamma}$ is any  $2^{-n}$-net for $\Gamma,$ i.e.\ a set of points in $\Gamma$ with the properties that $\sfd(x,y)>2^{-n}$ for all $x,y\in X_n^{\Gamma}$ ($2^{-n}$-separation) and for every $z\in \Gamma$ there exists $x\in X_n^{\Gamma}$ such that $\sfd(x,z)\leq 2^{-n}$ (maximality)}.

\textcolor{black}{In order to deduce that $\Gamma\in
\mathrm{GLem}(\kappa,1)$ in the sense of Definition \ref{d:GL}, we
take an arbitrary dyadic system $\Delta$ on $\Gamma$ and fix a
cube $Q_0 \in \Delta$ with $\ell(Q_0)=2^{-j_0}$. We need to bound
from above the expression
\begin{align}\label{eq:ToBoundGLemCube}
   & \sum_{Q\in \Delta_{Q_0}}\kappa(2Q)\mathcal{H}^1(Q)\\&=
    \sum_{j\geq j_0}\sum_{Q\in \Delta_{Q_0}\cap \Delta_j}\frac{\mathcal{H}^1(Q)}{\mathcal{H}^1(2Q)^3}\int_{2Q}\int_{2Q}\int_{2Q}\frac{\partial(\{x_1,x_2,x_3\})}{\mathrm{diam}(2Q)}\,d\mathcal{H}|_{\Gamma}^1
    (x_1)d\mathcal{H}|_{\Gamma}^1
    (x_2)d\mathcal{H}|_{\Gamma}^1
    (x_3).\notag
\end{align}
Now for a fixed $j\geq j_0$, the collection of ``centers'' $\{x_Q:\, Q\in \Delta_j\}$ (as in item (5) in Definition \ref{dl:dyadic}) is $2^{-n_j}$-separated, where $n_j$ is the smallest natural number such that $2^{-n_j}<c_0 2^{-j}$ with the constant $c_0\in (0,1)$ from Definition \ref{dl:dyadic}. The collection is not necessarily \emph{maximal}, but by adding points if necessary, one can enlarge it to a family $X_{n_j}^{\Gamma}$ as in the definition of multiresolution families. By construction, every $n\in \mathbb{N}$ appears as $n_j$ for at most one index $j$. Moreover, by \eqref{eq:KQ_in_Ball} and \eqref{eq:MeasCube}, we can choose a constant $A>1$, depending only on the $1$-regularity constant $C$, such that
\begin{equation}\label{eq:CubeBallIncl}
    2Q\subset B_{A 2^{-n_j}}(x_Q)\cap \Gamma,\quad Q\in \Delta_j.
\end{equation}
Associated to the given dyadic system $\Delta$, we fix now a multiresolution family $\mathcal{G}^{\Gamma}$ with the chosen constant $A$ and such that  $\{x_Q:\, Q\in \Delta_j\}\subset X_{n_j}^{\Gamma}$. Moreover, there exists a constant $K$, depending only on  $C$, such that
\begin{equation}\label{eq:BallCubeIncl}
B_{A2^{-n_j}}(x_Q)\cap \Gamma \subset B_{K\mathrm{diam}(Q_0)}(x_{Q_0})\cap \Gamma,\quad Q\in \Delta_{Q_0}\cap \Delta_j.
\end{equation}
Indeed, for $z\in B_{A2^{-n_j}}(x_Q)\cap \Gamma$, we have
\begin{displaymath}
\mathrm{dist}(z,Q_0)\leq \sfd(z,x_Q)<A2^{-n_j}<A c_0 2^{-j}\overset{\eqref{eq:MeasCube}}{\lesssim}_C \mathrm{diam}(Q_0),
\end{displaymath}
which proves \eqref{eq:BallCubeIncl} for a suitable constant $K$.
Then, by the construction of $\mathcal{G}^{\Gamma}$, \eqref{eq:CubeBallIncl}, \eqref{eq:BallCubeIncl}, and the property \eqref{eq:MeasCube} of dyadic cubes, we can bound the expression in \eqref{eq:ToBoundGLemCube} from above as follows
\begin{align}\label{eq:ToBoundGLemCube2}
 &
    \sum_{j\geq j_0}\sum_{Q\in \Delta_{Q_0}\cap \Delta_j}\frac{\mathcal{H}^1(Q)}{\mathcal{H}^1(2Q)^3}\int_{2Q}\int_{2Q}\int_{2Q}\frac{\partial(\{x_1,x_2,x_3\})}{\mathrm{diam}(2Q)}\,d\mathcal{H}|_{\Gamma}^1
    (x_1)d\mathcal{H}|_{\Gamma}^1
    (x_2)d\mathcal{H}|_{\Gamma}^1
    (x_3).\notag\\
    &\lesssim_C \sum_{\substack{B\in \mathcal{G}^{\Gamma}\\ B\subset B_{K\mathrm{diam}(Q_0)}(x_{Q_0})}}\int_B\int_B\int_B \partial(\{x_1,x_2,x_3\})\mathrm{rad}(B)^{-3}\,d\mathcal{H}^1|_{\Gamma}(x_1)d\mathcal{H}^1|_{\Gamma}(x_2)d\mathcal{H}^1|_{\Gamma}(x_3).
\end{align}
Combined with Schul's result \eqref{eq:GLemSchul}, the two inequalities \eqref{eq:ToBoundGLemCube} and \eqref{eq:ToBoundGLemCube2} yield
\begin{displaymath}
    \sum_{Q\in \Delta_{Q_0}}\kappa(2Q)\mathcal{H}^1(Q)\lesssim_C \mathcal{H}^1(Q_0),
\end{displaymath}
and hence, as $Q_0\in \Delta$ was arbitrary, $\Gamma\in \mathrm{GLem}(\kappa,1,M)$, with $M$ depending only on $C$.
}

\medskip

\textcolor{black}{In the second step of the proof of Proposition
\ref{p:BPBItoGLem}, we observe that the  set $E$ given in the
statement has big pieces of connected $1$-regular sets since it
has BPBI (with some constants $c$ and $L$). Indeed, for $x\in E$
and $0<r<\mathrm{diam}(E)$, let $f(A)$ be the associated
bi-Lipschitz piece as in the definition of BPBI. Then, for
arbitrary $y\in f(A)$ and $s>0$, the set $f^{-1}(f(A)\cap B_s(y))$
is contained in an interval of length $2Ls$ by the
$L$-bi-Lipschitz property of $f$. Upon translating and rescaling
by $2Ls$, we thus find a set $A_{y,s}\subset [0,1]$ and a
$2L^2s$-Lipschitz function from $A_{y,s}$ onto $f(A)\cap B_s(y)$.
Since $y$ and $s$ were arbitrary,  Corollary \ref{c:FromLipToReg}
then yields a connected set $\Gamma_0\in \mathrm{Reg}_1(C_0)$ with
$C_0$ depending only on doubling and quasiconvexity constants of
$(X,\sfd)$ and on $L$ such that $\Gamma_0\supseteq f(A)$. Since
$f(A)$ was chosen as in the definition of BPBI, we have in
particular $\mathcal{H}^1(E\cap \Gamma_0\cap B_r(x))\geq cr$.
Repeating the same argument for all $x$ and $r$, we find that $E$
has big pieces (as defined in \cite[Definition 2.11]{MR4485846})
of connected $1$-regular sets.}

\medskip
\textcolor{black}{Finally, by the first two steps of the proof, we
know that $E$ has big pieces of sets that satisfy
$\mathrm{GLem}(\kappa,1)$ with uniform constants. Then it follows
from an abstract argument that $E$ itself satisfies
$\mathrm{GLem}(\kappa,1)$. This abstract argument is known as
stability of geometric lemmas under the ``big pieces functor'' and
was formulated in great generality in \cite[Proposition
2.23]{MR4485846}, which we can apply to conclude the proof.}
\end{proof}


Our main contribution to Corollary \ref{eq:equiv1UR} is the proof
of ``\eqref{v:UR} $\Rightarrow$ \eqref{i:UR}'', and it is
precisely this implication which arises as a corollary of Theorem
\ref{t:Char1URI}. To complete the circle of equivalent statements,
we also discuss the implication ``\eqref{i:UR} $\Rightarrow$
\eqref{ii:UR}''.

\begin{proof}[Proof of Corollary \ref{eq:equiv1UR}] We assume first that \eqref{i:UR} holds, that is,
$E$ is a $1$-regular set contained in a closed and connected
$1$-regular set $\Gamma_0$, and we will deduce \eqref{ii:UR}. We
call the \emph{data} of $(E,\Gamma_0,\X)$ the collection of the
$1$-regularity constants of $E$ and $\Gamma_0$, as well as the
doubling and quasicovexity constants of $(\X,\sfd)$.
 We could
essentially use $\Gamma_0$ to construct a big Lipschitz image in
$E\cap B_r(x)$, for an arbitrarily given point $x\in E$ and
$0<r<\mathrm{diam}(E)$, but the localization argument will be
simpler if we use curves given by Theorem  \ref{t:Char1URI}. Let
$C\geq 1$ be a large enough constant, to be chosen momentarily.
Using the localization property for $s$-regular sets, stated in
Proposition \ref{prop:localize}, we can find a $1$-regular set
$E_{x,r}$, with regularity constant depending only the regularity
constant of $E$, such that
\begin{displaymath}
B_{r/3C}(x)\cap E\subset E_{x,r}\subset B_{r/C}(x)\cap E.
\end{displaymath}
As a subset of $E$, the set $E_{x,r}$ is still covered by
$\Gamma_0$. By \cite[Theorem 1.10]{MR2337487}, the connected
$1$-regular set $\Gamma_0$ satisfies
\begin{equation*}
\int\int\int_{(\Gamma_0\cap B_R(z))^3}
\frac{\partial(\{x_1,x_2,x_3\})}{\mathrm{diam}\{x_1,x_2,x_3\}^3}
\,d\mathcal{H}^1(x_1)d\mathcal{H}^1(x_2)d\mathcal{H}^1(x_3)\leq
C_0 R,
\end{equation*}
where $C_0$ depends on the Ahlfors regularity constant of
$\Gamma_0$. Since $E_{x,r}\subset \Gamma_0$, it follows that
$E_{x,r}$ satisfies condition \eqref{eq:triple} in Theorem
\ref{t:Char1URI}, with the same constant ``$C_0$''. Hence, by
Theorem \ref{t:Char1URI}, there exists a $1$-regular curve
$\Gamma_{x,r}\supset E_{x,r}$ with $\Gamma_{x,r}\in
\mathrm{Reg}_1(\widetilde{C})$ and
$\mathrm{diam}(\Gamma_{x,r})\leq
\widetilde{C}\mathrm{diam}(E_{x,r})(\leq 2\widetilde{C}r/C)$,
where $\widetilde{C}$ depends only on the data of
$(E,\Gamma_0,\X)$. In particular, by choosing $C$ large enough
depending only on the data of $(E,\Gamma_0,\X)$, we may assume
that $\Gamma_{x,r} \subset B_r(x)$.

Now \cite[Lemma 2.8]{2021arXiv210906753B}
and a straightforward reparametrization show that there exists an
$L$-Lipschitz function $\gamma:[-r,r] \to \X$ with
$\gamma([-r,r])=\Gamma_{x,r}$ and $L$ bounded in terms of the data
of $(E,\Gamma_0,\X)$. Moreover,
\begin{displaymath}
\mathcal{H}^1(\gamma([-r,r])\cap E\cap B_r(x))\geq
\mathcal{H}^1(E_{x,r})\geq \mathcal{H}^1(E\cap B_{r/3C}(x))\sim_C
r.
\end{displaymath}
Repeating the same argument for every  $x\in E$ and
$0<r<\mathrm{diam}(E)$ proves that $E$ satisfies \eqref{ii:UR},
that is, it has BPLI (with constants depending only on the data of
$(E,\Gamma_0,\X)$). \textcolor{black}{Then $E$ has also BPBI
(condition \eqref{iii:UR}) by \cite[Corollary 1.2]{MR2554164} and
finally, satisfies the geometric lemma in condition \eqref{v:UR}
by Proposition \ref{p:BPBItoGLem}.}


In the converse direction, assume now that  \eqref{v:UR} holds for
a set $E\in \mathrm{Reg}_1(C)$ in $\X$. Fix a dyadic system
$\Delta$ on $E$ and consider arbitrary $z\in E$ and
$0<R<\mathrm{diam}(E)/2$. Then there exists $j_0\in \mathbb{J}$ such
that $B_R(z)\cap E \subset \bigcup_{i=1}^m Q_{0,i}$ with
$Q_{0,i}\in \Delta_{j_0}$, $2^{-j_0-1}\leq R <2^{-j_0}$ and $m$
depending only on $C$. To see this, recall that $B_R(z)\cap E$ is
covered by the union of the cubes $Q\in \Delta_{j_0}$. Now if
$Q\cap B_R(z)\neq \emptyset$, then $Q\subset
B_{\widetilde{C}R}(z)\cap E$ for a constant
$\widetilde{C}=\widetilde{C}(C)$. Since $E\in
\mathrm{Reg}_1(C)$, the elements in $\Delta_{j_0}$ are disjoint
and thanks to the lower bound for the $\mathcal{H}^1$-measure of $Q\in
\Delta_{j_0}$ stated in \eqref{eq:MeasCube}  we have that at most
$m\lesssim_C 1$ elements $Q\in \Delta_{j_0}$ can intersect
$B_R(z)$.

Finally, we will show that there exists a constant $K=K(C)$ such
that, for every $Q_{0,i}\in \Delta_{j_0}$,
\begin{equation}\label{eq:IntSum}
\int\int\int_{[E\cap
B(z,R)]^3}\frac{\partial(\{x_1,x_2,x_3\})}{(\mathrm{diam}\{x_1,x_2,x_3\})^3}\,d\mathcal{H}^1(x_1)d\mathcal{H}^1(x_2)d\mathcal{H}^1(x_3)
\lesssim_C \sum_{i=1}^m\sum_{Q\in \Delta_{Q_{0,i}}} \kappa(KQ)
\mathcal{H}^1(Q).
\end{equation}
This will allow us to verify the assumption of Theorem
\ref{t:Char1URI} via \eqref{v:UR} ($E\in \mathrm{GLem}(\kappa,1)$).
 Then we deduce
that $E$ is contained in a closed and connected $1$-regular  set
$\Gamma_0$, thus \eqref{i:UR} holds.

To conclude the proof we verify  inequality
\eqref{eq:IntSum}. We first decompose the domain of integration as
$[E\cap B_R(z)]^3=\bigcup_{j\geq j_0} A_j$, where
\begin{equation}\label{eq:A_j}
A_j:= \{(x_1,x_2,x_3)\in [E\cap B_R(z)]^3:\, 2^{-j}\leq
\mathrm{diam}\{x_1,x_2,x_3\}< 2^{-j+1}\}.
\end{equation}
It suffices to consider $j\geq j_0$ since $A_j\neq \emptyset$ implies that
\begin{equation}\label{eq:Bound_j}
2^{-j}\leq 2R < 2^{-j_0+1}.
\end{equation}
Thus, if $(x_1,x_2,x_3)\in A_j$, then $\{x_1,x_2,x_3\}\subset
B_{r}(x_3)$ for some $j\geq j_0$ with $2^{-j}\leq r<2^{-j+1}$. Recalling the basic property of dyadic cubes stated in
\eqref{eq:ball_enlarged_cube}, there exists a constant
$K=K(1,C)>1$, depending only on the Ahlfors regularity constant of
$E\in \mathrm{Reg}_1(C)$ such that there is $Q\in \Delta_j$ with
$
\{x_1,x_2,x_3\}\subset B(x_3,r) \subset KQ
$
and $Q\subset Q_{0,i}$ for some $i\in \{1,\ldots,m\}$. Thus
\begin{equation}\label{eq:A_j decomp}
    A_j \subset \bigcup_{i=1}
^m \bigcup_{Q\in \Delta_{Q_{0,i}}\cap \Delta_j} (KQ)^3.\end{equation}
Therefore,
\begin{align*}
&\int\int\int_{[E\cap
B_R(z)]^3}\frac{\partial(\{x_1,x_2,x_3\})}{(\mathrm{diam}\{x_1,x_2,x_3\})^3}\,d\mathcal{H}^1(x_1)d\mathcal{H}^1(x_2)d\mathcal{H}^1(x_3)
\\&\overset{\eqref{eq:A_j decomp}, \eqref{eq:MeasCube}}{\lesssim_{C}} \sum_{j\geq j_0}  \sum_{i=1}^m\sum_{Q\in
\Delta_{Q_{0,i}}\cap \Delta_j}
\int\int\int_{[KQ]^3}\frac{\partial(\{x_1,x_2,x_3\})}{\mathrm{diam}(KQ)^3}\,d\mathcal{H}^1(x_1)d\mathcal{H}^1(x_2)d\mathcal{H}^1(x_3)\\
&\overset{\eqref{eq:MeasCube}}{\lesssim_{C}}
\sum_{i=1}^m \sum_{Q\in \Delta_{0,i}} \kappa(KQ) \mathcal{H}^1(Q).
\end{align*}
By the assumption that $E\in \mathrm{GLem}(\kappa,1)$, and recalling Lemma \ref{l:coeff} (see also Remark \ref{rmk:coeff}), this shows as desired that \eqref{eq:triple} holds for all $z\in E$ with a constant depending only on $M$, $C$, and the doubling and quasiconvexity constants of $(X,\sfd)$, at least for $R<\mathrm{diam}(E)/2$. If $E$ is unbounded, we are done. Otherwise, if $E$ is bounded and $R\geq \mathrm{diam}(E)/2$, then we apply the preceding argument with $j_0=n$, where $n$ is the smallest integer in $\mathbb{J}$, and the arguments go through verbatim if we replace in \eqref{eq:Bound_j} the bound ``$2R$'' by ``$\mathrm{diam}(E)$''. In any case, the assumption of Theorem \ref{t:Char1URI} is satisfied for $E$, and the first part of the corollary follows.

The part concerning the covering of bounded sets $E$ by $1$-regular curves is also a consequence of Theorem \ref{t:Char1URI}.
\end{proof}

\subsection{Characterization using $\nb_{1,1}$-numbers}\label{ss:Char_iota11}
In this section we complement Corollary \ref{eq:equiv1UR} by providing a further equivalent characterization for uniform $1$-rectifiability, now in terms of the $\nb$-numbers from Definition \ref{d:alpha}.

\begin{thm}\label{t:Char1URII} Let $(\X,d)$ be a  metric space and $E\in \reg_1(c_E)$.
Then the following are equivalent:
\begin{enumerate}
\item\label{i:SGLmetric}  $E\in \mathrm{GLem}(\kappa,1)$,
\item\label{ii:SGLmetric}  $E\in \mathrm{GLem}(\nb_{1,1},1)$.
\end{enumerate}
In fact, if $\triangle$ is a system of
Christ-David dyadic cubes on the $1$-regular set $E$, then
\begin{equation}\label{eq:equivalent alk}
    3^{-1}\kappa(2Q) \le \nb_{1,1}(2Q)\le C \kappa(7Q), \quad Q \in \Delta,
\end{equation}
where $C\ge 1$ is a constant depending only on $c_E.$

Moreover, if $(\X,d)$ is complete, quasiconvex, and doubling, and if
one (and thus both) of  conditions \eqref{i:SGLmetric} and
\eqref{ii:SGLmetric} hold, then $E$ is uniformly $1$-rectifiable.
\end{thm}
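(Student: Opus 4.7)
The plan is to first establish the pointwise comparison \eqref{eq:equivalent alk} between $\kappa$ and $\nb_{1,1}$ on dyadic cubes, and then to deduce from it the equivalence of the two geometric lemmas and the uniform $1$-rectifiability statement.

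For the lower bound $3^{-1}\kappa(2Q) \le \nb_{1,1}(2Q)$, I would argue pointwise. All norms on $\mathbb{R}$ are scalar multiples of $|\cdot|$, so by absorbing the factor into the map it suffices to consider $\|\cdot\|=|\cdot|$. Fix any Borel $f:2Q\to\mathbb{R}$ and any three points $x_1,x_2,x_3\in 2Q$. Choose $\sigma\in S_3$ so that $f(x_{\sigma(1)})\le f(x_{\sigma(2)})\le f(x_{\sigma(3)})$, hence
\begin{equation*}
    |f(x_{\sigma(1)})-f(x_{\sigma(2)})|+|f(x_{\sigma(2)})-f(x_{\sigma(3)})|-|f(x_{\sigma(1)})-f(x_{\sigma(3)})|=0.
\end{equation*}
Consequently,
\begin{equation*}
    \partial(\{x_1,x_2,x_3\}) \le \partial_1(x_{\sigma(1)},x_{\sigma(2)},x_{\sigma(3)}) \le \sum_{1\le i<j \le 3}\bigl|\sfd(x_i,x_j)-|f(x_i)-f(x_j)|\bigr|.
\end{equation*}
Dividing by $\diam(2Q)$, integrating over $(2Q)^3$ and using symmetry (the three pairwise terms contribute equally) yields $\kappa(2Q)\le 3\,\nb_{1,1}(2Q)$ upon taking the infimum over $f$.

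For the upper bound $\nb_{1,1}(2Q) \le C\,\kappa(7Q)$ — which is the genuinely difficult part — I would invoke the integral $L^1$-quantified Menger embedding result, namely Theorem \ref{thm:l1 menger} from the appendix. Heuristically, small average triangular excess on $7Q$ is used to construct a Borel map $f:2Q\to\mathbb{R}$ of Menger-type (choosing two reference points $p,q\in 7Q$ with $\sfd(p,q)\sim \diam(Q)$ and sending each $x$ to a signed combination of $\sfd(x,p)$ and $\sfd(x,q)$). The enlargement from $2Q$ to $7Q$ is what allows room to pick reference points with controlled spread, and the resulting distortion is then bounded in $L^1$ by the averaged triangular excess on $7Q$.

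With \eqref{eq:equivalent alk} in hand, the equivalence $\mathrm{GLem}(\kappa,1)\Leftrightarrow\mathrm{GLem}(\nb_{1,1},1)$ follows by summing the pointwise estimates over $Q\in\Delta_{Q_0}$: the coefficient $\nb_{1,1}$ manifestly satisfies the monotonicity condition \eqref{eq:h_monot} (it is an $L^1$ average normalized by the diameter, cf.\ Remark \ref{rmk:coeff}), so Lemma \ref{l:coeff} lets us freely pass between the dilation constants $2$ and $7$. The final claim, uniform $1$-rectifiability in complete, doubling, quasiconvex metric spaces, is then an immediate consequence of Corollary \ref{eq:equiv1UR}, which already identifies $\mathrm{GLem}(\kappa,1)$ as a characterization of uniform $1$-rectifiability in this setting. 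The principal obstacle throughout is the upper bound in \eqref{eq:equivalent alk}: turning Menger's qualitative embedding into an $L^1$-statement requires a careful construction of the map $f$, a Borel selection of reference points within $7Q$, and a precise bookkeeping of the error terms — all of which is the content of Theorem \ref{thm:l1 menger}.
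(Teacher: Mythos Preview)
Your proposal is correct and follows essentially the same route as the paper: the lower bound via ordering the $f$-values, the upper bound via Theorem~\ref{thm:l1 menger}, and then Lemma~\ref{l:coeff} together with Corollary~\ref{eq:equiv1UR} for the remaining conclusions. The one technical step you skip is that Theorem~\ref{thm:l1 menger} needs a bounded $1$-regular space, so the paper first applies Proposition~\ref{prop:localize} to produce a $1$-regular set $E_Q$ with $2Q\subset E_Q\subset 7Q$ before invoking it (this is the real reason for the enlargement to $7Q$); also, for the implication \eqref{i:SGLmetric}$\Rightarrow$\eqref{ii:SGLmetric} it is the monotonicity of $\kappa$, not of $\nb_{1,1}$, that is used in Lemma~\ref{l:coeff}.
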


For the implication $\eqref{ii:SGLmetric}\implies \eqref{i:SGLmetric}$ in Theorem \ref{t:Char1URII}
 we will need the following result, the proof of which is postponed to the next subsection.

\begin{thm}[$L^1$-quantified  Menger theorem]\label{thm:l1 menger}
    Let $(\X,\d)$ be a bounded and 1-regular metric space. Then there exists a Borel map $f: \X \to \mathbb{R}$ such that
    \begin{equation}\label{eq:l1 menger}
        \fint_{\X}\fint_{\X} \frac{||f(x)-f(y)|-\sfd(x,y)|}{\diam(\X)}\d\mu(x)\d \mu(y) \le C\fint_{\X} \fint_{\X} \fint_{\X}\frac{\partial(\{x,y,z\})}{\diam(\X)}\, \d \mu(x) \d \mu(y) \d \mu(z),
    \end{equation}
    where $\mu\coloneqq \mathcal H^1$ and $C$ is a constant  depending only on the regularity constant of $\X$.
\end{thm}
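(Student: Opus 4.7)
I would prove Theorem \ref{thm:l1 menger} by taking as candidate map
\[
f(z)=\tfrac{1}{2}\bigl(\sfd(z,a)-\sfd(z,b)+\sfd(a,b)\bigr)
\]
for a carefully chosen ``approximate diameter pair'' $(a,b)\in\X^{2}$. The motivation is that when all triangular excesses vanish identically and $(a,b)$ is a true diameter pair, this formula realizes an isometric embedding into $\mathbb{R}$ in the spirit of Menger's theorem; one should then expect the quantitative integrated version to follow by propagating the associated algebraic identities through the $L^{1}$ average.

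\textbf{Step 1 (Anchor selection).} Write $D=\diam(\X)$ and $\mu=\mathcal{H}^{1}$, so that $1$-regularity gives $\mu(\X)\sim D$. Using the volume bound $\mu(B_{r}(a))\le C_{0}r$ with $r$ chosen comparable to $D/C_{0}^{2}$, one verifies that the ``far pairs'' set
\[
\mathcal{S}:=\{(a,b)\in\X^{2}:\sfd(a,b)\ge c_{0}D\}
\]
satisfies $(\mu\otimes\mu)(\mathcal{S})\gtrsim D^{2}$ for a constant $c_{0}=c_{0}(C_{0})>0$. Writing $T:=\fint_{\X}\int_{\X}\int_{\X}\partial(\{x,y,z\})\,d\mu(x)d\mu(y)d\mu(z)$, Fubini applied to the hypothesis combined with Markov's inequality on $\mathcal{S}$ produces some $(a,b)\in\mathcal{S}$ for which
\[
\fint_{\X}\fint_{\X}\bigl(\partial(\{a,x,y\})+\partial(\{b,x,y\})\bigr)\,d\mu(x)d\mu(y)\lesssim T,
\]
the implicit constant depending only on the regularity constant.

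\textbf{Step 2 (Pointwise identity and integration).} Setting $A:=\sfd(y,a)-\sfd(x,a)$ and $B:=\sfd(x,b)-\sfd(y,b)$, the formula for $f$ yields $f(y)-f(x)=\tfrac{1}{2}(A+B)$, and since $|A|,|B|\le \sfd(x,y)$ one obtains $|f(x)-f(y)|\le\sfd(x,y)$ for free. Expanding $2\sfd(x,y)\mp(A+B)$ as a sum of two ordered excesses from \eqref{eq:def excess} gives the key identity
\[
\sfd(x,y)-|f(x)-f(y)|=\tfrac{1}{2}\min\bigl(\partial_{1}(y,x,a)+\partial_{1}(x,y,b),\ \partial_{1}(x,y,a)+\partial_{1}(y,x,b)\bigr).
\]
The condition $\sfd(a,b)\ge c_{0}D$ then forces, for most $(x,y)\in\X^{2}$, the longest side of the triangle $\{a,x,y\}$ to be incident to $a$; equivalently, the middle vertex of $\{a,x,y\}$ lies in $\{x,y\}$, in which case at least one of $\partial_{1}(y,x,a)$, $\partial_{1}(x,y,a)$ coincides with the unordered excess $\partial(\{a,x,y\})$. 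The analogous statement holds for the triangle $\{b,x,y\}$. Integrating the pointwise bound over $(x,y)$ and invoking Step 1 produces the required $L^{1}$ estimate, after dividing both sides by $D$.

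\textbf{Main obstacle.} The subtle step is passing from the ordered excesses in the pointwise identity to the unordered $\partial(\{\cdot,\cdot,\cdot\})$ from the hypothesis. Since $\partial_{1}(y,x,a)+\partial_{1}(x,y,a)=2\sfd(x,y)$ identically, if $\partial(\{a,x,y\})$ happens to be small because $a$ is the ``middle vertex'' of the triangle, both of the relevant ordered excesses can still be as large as $\sfd(x,y)$, and the identity of Step 2 does not by itself give a useful bound. The near-diameter choice of $(a,b)$ rules out the ``anchor-in-the-middle'' configuration for generic triples, but quantifying the measure of the exceptional set where it does occur---and showing its contribution (controlled only by the trivial $\sfd(x,y)\le D$) is absorbed by $T$---is where I expect the bulk of the technical work to lie. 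This likely requires a secondary Fubini/Markov argument exploiting the $1$-regularity and is presumably the ``further refinement'' of the pointwise Theorem \ref{thm:quantified menger} that the authors allude to.
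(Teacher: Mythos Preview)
Your candidate map does not work. The min in your Step 2 identity pairs $\partial_1(y,x,a)$ with $\partial_1(x,y,b)$ in one branch and $\partial_1(x,y,a)$ with $\partial_1(y,x,b)$ in the other. When the \emph{same} point, say $x$, is the middle vertex of both triangles $\{a,x,y\}$ and $\{b,x,y\}$ (i.e.\ $[yxa]$ and $[yxb]$ both hold), the two unordered excesses are $\partial_1(y,x,a)$ and $\partial_1(y,x,b)$, which land in \emph{different} branches of the min---so neither branch is bounded by $\partial(\{a,x,y\})+\partial(\{b,x,y\})$. This is not the ``anchor-in-the-middle'' configuration you flag; it arises even when both anchors are extreme. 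Concretely, take $\X=[0,D]$ with the standard metric, $a=0$, $b=c_0D$ (your Step~1 permits this: all excesses vanish, so every far pair satisfies the Markov condition). Then $f(z)=z$ for $z\le c_0D$ and $f(z)\equiv c_0D$ for $z\ge c_0D$, so the left-hand side of \eqref{eq:l1 menger} is of order $(1-c_0)^2>0$ while the right-hand side is zero. No secondary Markov argument can repair this, since there is no excess available to absorb a fixed positive error.

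The paper instead takes $f(x)=\pm\sfd(x,P)$ with the sign determined by whether $[xPQ]$ holds; this discontinuous choice is exactly what makes the interval example above work for any admissible $(P,Q)$. The pairs $(x,y)$ for which the error is not already bounded by $5\,\partial\{P,Q,x,y\}$ are then shown, via Lemma~\ref{prop:4 points lemma}, to form an almost-\emph{circular} quadruple together with $P,Q$; the attraction Lemma~\ref{prop:attraction} forces most fifth points $z\in\X$ to have large five-point excess $\partial\{P,Q,x,y,z\}$, and integrating over $z$ (using 1-regularity to bound the small attracted set) delivers the estimate. The ``further refinement'' is thus a genuine geometric dichotomy tailored to the sign-based map, not a routine averaging trick applicable to your $1$-Lipschitz $f$.
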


\begin{proof}[Proof of Theorem \ref{t:Char1URII}]
Once the equivalence of \eqref{i:SGLmetric} and
\eqref{ii:SGLmetric} is established for a set $E$ in a complete, quasiconvex and doubling metric space, it follows from Corollary
\ref{eq:equiv1UR} that $E$ with these properties is
uniformly $1$-rectifiable.

Thus we concentrate on the equivalence of \eqref{i:SGLmetric} and
\eqref{ii:SGLmetric}, for which is enough to show \eqref{eq:equivalent alk}. Let $\triangle$ be a system of Christ-David
cubes on $E$, and fix $Q \in \triangle$.
 We
first prove the following inequality:
\begin{equation}\label{eq:metric_lesssim_alpha}
\kappa(2Q)\le3
\nb_{1,1}(2 Q).
\end{equation}
To this end, let $x_1,x_2,x_3\in 2Q$ be arbitrary, and consider any function $f:2Q \to \mathbb{R}$ (which we later take to be Borel)
and any norm $\|\cdot\|$ on $\mathbb{R}$. Then there
exists $\sigma \in S_3$ (depending on $x_1,x_2,x_3$) such that
\begin{displaymath}
f(x_{\sigma(1)}) \leq f(x_{\sigma(2)}) \leq f(x_{\sigma(3)})
\end{displaymath}
and hence
$\partial_1(f(x_{\sigma(1)}),f(x_{\sigma(2)}),f(x_{\sigma(3)}))=0$,
where $\partial_1(\cdot)$ is computed with respect to $\|\cdot\|$, which
is a constant multiple of $|\cdot|$ (see \eqref{eq:def excess} for the expressions of $\partial_1,\partial$). Therefore,
\begin{align*}
\frac{\partial(\{x_1,x_2,x_3\})}{\diam(2Q)} \leq &
\frac{\partial_1(x_{\sigma(1)},x_{\sigma(2)},x_{\sigma(3)})}{\diam(2Q)}\\=&
\frac{\partial_1(x_{\sigma(1)},x_{\sigma(2)},x_{\sigma(3)})-\partial_1(f(x_{\sigma(1)}),f(x_{\sigma(2)}),f(x_{\sigma(3)}))}{\diam(2Q)}\\
 \leq &
\tfrac{\left|d(x_{\sigma(1)},x_{\sigma(2)})-\|f(x_{\sigma(1)})-f(x_{\sigma(2)})\|\right|}{\diam(2Q)}
+
\tfrac{\left|d(x_{\sigma(2)},x_{\sigma(3)})-\|f(x_{\sigma(2)})-f(x_{\sigma(3)})\|\right|}{\diam(2Q)}\\&+
\tfrac{\left|d(x_{\sigma(1)},x_{\sigma(3)})-\|f(x_{\sigma(1)})-f(x_{\sigma(3)})\|\right|}{\diam(2Q)}.
\end{align*}
Note that the last expression is unchanged if we replace each $``\sigma(i)"$ by $``i"$. Hence
integrating and taking the infimum over all Borel functions $f:2Q\to \mathbb{R}$ and $\|\cdot\|$, using the definition of $\kappa(\cdot)$ and $\nb_{1,1}(\cdot)$, proves the
inequality \eqref{eq:metric_lesssim_alpha}. In particular,
\eqref{ii:SGLmetric} implies \eqref{i:SGLmetric}.

Next we prove the following opposite inequality
\begin{equation}\label{eq:converse alk}
    \nb_{1,1}(2Q)\le C\kappa(7 Q),
\end{equation}
where $C>0$ is a constant depending only on $c_E$. Fix $Q \in \triangle$, $z\in Q$, and set $d(Q)\coloneqq \diam(Q)$.
Applying Proposition
\ref{prop:localize} we can  find  a 1-regular set $E_Q\subset E$
such that $B_{2d(Q)}(z)\cap E\subset E_Q\subset B_{6d(Q)}(z)\cap E$  and
with a regularity constant depending only on $c_E$ (if $2d(Q)\ge\mathrm{diam}(E)$ we simply take $E_Q=E$). In particular
\[
2Q\subset E_Q\subset 7Q
\]
Then applying Theorem \ref{thm:l1 menger} to the  metric  space $(E_Q,\sfd|_{E_Q})$ we obtain a Borel map $f: E_Q \to \rr$ satisfying
\begin{equation}\label{eq:preliminary EQ inequality}
\begin{split}
    \fint \fint_{(E_Q)^2}||f(x)-f(y)|&-\sfd(x,y)|\d\mathcal H^1(x)\d\mathcal H^1(y)  \\
    &\le \tilde C\fint \fint \fint_{(E_Q)^3}\partial\{x,y,z\}\,\d\mathcal H^1(x)\d\mathcal H^1(y)\d\mathcal H^1(z),
\end{split}
\end{equation}
where $\tilde C$ is a constant depending only on $c_E.$
Moreover by the 1-regularity of $E$ and by the property of dyadic systems stated in \eqref{eq:MeasCube}, we have
\[
c_E^{-1}c_0d(Q)\le \mathcal H^1(Q)\le \mathcal H^1(2Q)\le \mathcal H^1(E_Q)\le  \mathcal H^1(7Q)\le 7c_E d(Q),
\]
where $c_0$ is the constant, depending only on $c_E,$ appearing in the definition of dyadic system.
  Therefore  using \eqref{eq:preliminary EQ inequality}
\[
\begin{split}
    \frac{1}{\mathcal H^1(2Q)^2}\int \int_{(2Q)^2}&\frac{||f(x)-f(y)|-\sfd(x,y)|}{\diam (2Q)}\d\mathcal H^1(x)\d\mathcal H^1(y)
    \\
    &\le  c_0^{-2}7^2c_E^4\fint \fint_{(E_Q)^2}\frac{||f(x)-f(y)|-\sfd(x,y)|}{\diam (2Q)}\}\d\mathcal H^1(x)\d\mathcal H^1(y) \\
    &\le \tilde C\cdot c_0^{-2}7^2c_E^4 \fint \fint \fint_{(E_Q)^3}\frac{\partial\{x,y,z\}}{\diam (2Q)}\,\d\mathcal H^1(x)\d\mathcal H^1(y)\d\mathcal H^1(z),\\
    &\le C \fint \fint \fint_{(7 Q)^3}\frac{\partial\{x,y,z\}}{\diam(7Q)}\,\d\mathcal H^1(x)\d\mathcal H^1(y)\d\mathcal H^1(z),
\end{split}
\]
where $C$ is a constant depending only on $c_E$. This proves \eqref{eq:converse alk}, which combined with Lemma \ref{l:coeff} (whose assumption are satisfied for the coefficients $\kappa$ by Remark \ref{rmk:coeff}) gives also the implication  \eqref{i:SGLmetric} $\implies$ \eqref{ii:SGLmetric}.
\end{proof}

\subsection{Constructing good maps into
$\mathbb{R}$}\label{ss:ConstrGoodMapsIntoR}

The main goal of this subsection is to prove Theorem \ref{thm:l1
menger}. This requires us to construct maps $f:\X \to \mathbb{R}$
with good properties using  a suitable control for the
triangular excess of point triples in $\X$. We introduce some
notation to make this precise. We say that
a map between two metric spaces $f: (\X_1,\sfd_1)\to
(\X_2,\sfd_2)$ is a \emph{$\delta$-isometry}, for some $\delta\ge
0$, if
\[
|\sfd_2(f(x),f(y))-\sfd_1(x,y)|\le \delta, \quad  x,y \in \X.
\]
For every $S\subset (\X,\sfd)$, we define
\[
\partial S\coloneqq \sup_{\{x,y,z\}\subset  S} \partial(\{x,y,z\}).
\]
Given three points $x,y,z$ in a metric space $(\X,\sfd)$, we write
$[xyz]$ if
\[
\sfd(x,y)+\sfd(y,z)-\sfd(x,z)=\partial(\{x,y,z\}),
\]
which is in fact equivalent to
\[
\sfd(x,z)\ge\max( \sfd(x,y),\sfd(y,z)).
\]
Clearly $[xyz]\iff [zyx]$ and moreover at least one of the properties
$[xyz]$, $[xzy]$, or $[zxy]$ always holds.

\begin{definition}[Almost circular points]\label{def:circular points}
    Let $(\X,\sfd)$ be a metric space and fix a number $\eta \ge 0$. We say that four points $P_1,P_2,P_3,P_4 \in \X$ are \emph{$\eta$-circular} if
    \begin{equation}
        |\sfd(P_i,P_j)-\sfd(P_k,P_l)|\le \eta,
    \end{equation}
    for any choice of (distinct) indices $i,j,k,l\in\{1,2,3,4\}$.
\end{definition}
The name $\eta$-circular comes from the fact that any two couples
of antipodal points in the sphere (of any dimension) give rise to
four $0$-circular points. The motivation behind the above
definitions is the following result by Menger \cite{MR1512479} (see
also \cite{MR1476755,MR2888543}).
\begin{thm}[Menger]
    Suppose that a metric space $(\X,\sfd)$ satisfies $\partial \X=0$. Then either $\X$ contains only four points which are $0$-circular, or $\X$ can be isometrically embedded in $\mathbb{R}$.
\end{thm}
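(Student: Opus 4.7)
The plan is to build an embedding $f\colon\X\to\mathbb{R}$ of the form $f(x):=\sfd(p,x)$ for a carefully chosen basepoint $p$, and to show that the only obstruction to this construction is precisely the exceptional $0$-circular $4$-point configuration. First I would dispose of the trivial range $|\X|\le 3$ directly from $\partial\X=0$ (any triple embeds isometrically into $\mathbb{R}$), and henceforth assume $|\X|\ge 4$. I would pick $p,q\in\X$ with $\sfd(p,q)=\mathrm{diam}(\X)=:D$; the non-attained case can be handled by an exhaustion argument over finite subspaces, which, being unique up to reflection, can be glued coherently, so I assume attainment for the sketch.

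The maximality of $\sfd(p,q)$ together with $\partial(\{p,q,x\})=0$ leaves only the ordering $[pxq]$ for each $x\in\X$, so $\sfd(p,x)+\sfd(x,q)=D$. Setting $f(x):=\sfd(p,x)\in[0,D]$, I would verify $|f(x)-f(y)|=\sfd(x,y)$ by the trichotomy coming from $\partial(\{p,x,y\})=0$: the orderings $[pxy]$ and $[pyx]$ give the identity immediately, and the only problematic case is $[xpy]$, which yields $\sfd(x,y)=f(x)+f(y)$. In that case I would invoke $\partial(\{q,x,y\})=0$, substitute $\sfd(q,x)=D-f(x)$ and $\sfd(q,y)=D-f(y)$, and observe that two of its three sub-cases force $x=p$ or $y=p$, while the third yields
\[
\sfd(p,q)=\sfd(x,y),\quad \sfd(p,x)=\sfd(q,y),\quad \sfd(p,y)=\sfd(q,x),
\]
identifying $\{p,q,x,y\}$ as a $0$-circular quadruple in the sense of Definition \ref{def:circular points}.

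It remains to rule out this obstruction whenever $|\X|\ge 5$. Picking any fifth point $z$ and using that $\sfd(x,y)=D$ is also the diameter, both $[pzq]$ and $[xzy]$ must hold, so the four quantities $\sfd(p,z),\sfd(q,z),\sfd(x,z),\sfd(y,z)$ sum to $D$ in pairs. Running the $\partial=0$ trichotomy on each of $\{p,x,z\}$, $\{p,y,z\}$, $\{q,x,z\}$, $\{q,y,z\}$ and feeding the outputs into the rectangular identities for $\{p,q,x,y\}$ should force, in every admissible combination, either $z\in\{p,q,x,y\}$, or $\sfd(p,x)=0$, or $\sfd(p,y)=0$; each is a contradiction. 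I expect this $5$-point case analysis to be the main obstacle: although each individual sub-case is elementary arithmetic, one must navigate the several trichotomies simultaneously and repeatedly use $\sfd(p,x)+\sfd(p,y)=D$ to collapse parameters. Once this step is in place, the conclusion follows: either $|\X|=4$ with the four points $0$-circular (the exception), or $f$ is an isometric embedding of $\X$ into $\mathbb{R}$.
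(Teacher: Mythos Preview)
The paper does not give its own proof of Menger's theorem; the statement is quoted from the literature, and the paper's contribution is the quantified versions (Theorem~\ref{thm:quantified menger}, resting on Lemma~\ref{prop:4 points lemma} and Lemma~\ref{prop:attraction}), which at $\beta=0$ recover the classical result. Your argument is correct and is essentially that specialization, with one simplifying twist: by taking $p,q$ to realize the diameter you force $[pxq]$ for every $x$, so the unsigned map $f(x)=\sfd(p,x)$ suffices. The paper's construction instead uses any pair $P,Q$ that are merely far enough apart and sets $f(x)=\pm\sfd(x,P)$ with the sign determined by whether $[xPQ]$ holds; when $P,Q$ sit at the diameter that sign is always positive and the two maps agree. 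Your five-point elimination of the circular quadruple is exactly Lemma~\ref{prop:attraction} at $\beta=0$, and the arithmetic does close in every branch (each of the four triples $\{p,x,z\},\{p,y,z\},\{q,x,z\},\{q,y,z\}$ contributes a linear relation among $\sfd(p,z),\sfd(x,z),\sfd(p,x)$, and the system collapses to $z\in\{p,q,x,y\}$ or $\sfd(p,x)\in\{0,D\}$). The diameter choice buys you a cleaner case analysis; the paper's signed map buys robustness---it works for non-extremal $P,Q$, which is precisely what makes the argument quantify for $\beta>0$. Your exhaustion step for non-attained or infinite diameter is sound: once two reference points are pinned, the isometric embedding of each finite subset into $\mathbb{R}$ is unique, so the partial embeddings glue coherently.
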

Theorem \ref{thm:l1 menger} is a sort of $L^1$-quantified
generalization of the above theorem in the case of 1-regular
metric measure spaces. In fact, an $L^\infty$-quantified version
also holds for arbitrary metric spaces. This is a bit similar in
spirit to \cite[Lemma 6.4]{MR4236801}, which concerns the
construction of good maps locally from a \emph{curve} intersected with a ball into
$\mathbb{R}$.

\begin{thm}[$L^\infty$-quantified Menger theorem]\label{thm:quantified menger}
    Let $(\X,\sfd)$ be a metric space such that  $\partial \X \le \beta$, with $\beta \ge 0,$
     and containing five points having pairwise distances strictly grater
     than $30\beta$. Then there exists a map $f: (\X,\sfd) \to (\rr,|\cdot|)$ that is a $40\beta$-isometry.
\end{thm}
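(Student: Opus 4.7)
The plan is to mimic the classical proof of Menger's embedding theorem for $\mathbb R$, quantifying each step, and to exploit the extra widely-separated points to rule out the exceptional configurations that obstruct the classical theorem with fewer than five points.

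I would first select among the five given points a pair $P_1, P_2$ realizing the maximum pairwise distance $D$. Since all ten pairwise distances exceed $30\beta$, combining $\partial\{P_1, P_2, P_k\}\le\beta$ with the maximality of $D$ yields $\sfd(P_1, P_k) + \sfd(P_2, P_k) \le D + \beta$ for each of the remaining points $P_k$; in particular this forces $D > 59\beta$ and places each $P_k$ ``almost between'' $P_1$ and $P_2$ on an approximate metric axis. For each $x\in\X$ the bound $\partial\{P_1, P_2, x\}\le\beta$ forces at least one of three approximate betweenness conditions: (i) $\sfd(x, P_1) + \sfd(x, P_2) \le D + \beta$, (ii) $\sfd(x, P_2) - \sfd(x, P_1) \ge D - \beta$, or (iii) $\sfd(x, P_1) - \sfd(x, P_2) \ge D - \beta$. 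Conditions (ii) and (iii) are mutually exclusive because $D > \beta$, and (ii) (resp.\ (iii)) can overlap with (i) only when $\sfd(x, P_1) \le \beta$ (resp.\ $\sfd(x, P_2) \le \beta$). I would then define
\[
f(x) \coloneqq \begin{cases} -\sfd(x, P_1) & \text{if $x$ satisfies (ii)}, \\ +\sfd(x, P_1) & \text{otherwise}, \end{cases}
\]
which is consistent up to $O(\beta)$ in the overlap zones.

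The verification $\bigl||f(x) - f(y)| - \sfd(x, y)\bigr|\le 40\beta$ proceeds by case analysis on the labels attached to $x$ and $y$. The bounds $\partial\{x, y, P_i\}\le\beta$ for $i\in\{1,2\}$ each pin $\sfd(x, y)$ within $\beta$ to either $|\sfd(x, P_i) - \sfd(y, P_i)|$ (the ``good'' sub-case) or $\sfd(x, P_i)+\sfd(y, P_i)$ (the ``bad'' sub-case). In most pair-configurations the ``bad'' sub-case for at least one of $P_1, P_2$ is ruled out directly by the triangle inequality combined with $D > 30\beta$; for instance, in the configuration (ii)–(ii) the combined bad sub-cases for both $P_1$ and $P_2$ would require $2D + \sfd(x,P_1) + \sfd(y,P_1) - O(\beta) \le \sfd(x,y) \le \sfd(x,P_1)+\sfd(y,P_1)$, an immediate contradiction. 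In the remaining routine configurations the surviving good sub-case either directly matches the piecewise formula for $|f(x) - f(y)|$, or else forces $\sfd(x, P_1)$ or $\sfd(x, P_2)$ (or the analogue for $y$) to be $O(\beta)$, a boundary regime in which the formula is automatically correct up to $O(\beta)$.

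The main obstacle is the handful of configurations that simulate Menger's exceptional four-point counterexample. These are precisely (a) both $x, y$ in (i) with bad sub-cases for both $P_1$ and $P_2$ — the quantitative antipodal sphere, forcing $\sfd(x, P_1) + \sfd(y, P_1) \approx D$, $\sfd(x, P_2) + \sfd(y, P_2) \approx D$ and $\sfd(x, y) \approx D$, while $|f(x) - f(y)|$ can be made arbitrarily small — and (b) $x$ in (ii), $y$ in (iii) with good sub-cases for both $P_1$ and $P_2$, which encodes the classical four-point Menger exception and lets $\sfd(x, y)$ be arbitrarily smaller than $\sfd(x, P_1) + \sfd(y, P_1)$ that the formula outputs. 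For each such configuration I would deploy the third base point: the bounds $\partial\{x, P_j, P_3\}\le\beta$ and $\partial\{y, P_j, P_3\}\le\beta$ for $j = 1, 2$ pin $\sfd(x, P_3)$ and $\sfd(y, P_3)$ to approximate values expressible through $\sfd(x, P_1), \sfd(y, P_1)$ and $t \coloneqq \sfd(P_1, P_3)$; then $\partial\{x, y, P_3\}\le\beta$ combined with $\sfd(x, y) \approx D$ forces (via a short case analysis on which side of each intermediate triangle is longest) either $t \le O(\beta)$ or $D - t \le O(\beta)$, contradicting $\sfd(P_1, P_3), \sfd(P_2, P_3) > 30\beta$, unless the configuration degenerates so that one of $x, y$ lies within $O(\beta)$ of $P_1$ or $P_2$, in which regime the formula is automatically correct. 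The extra points $P_4, P_5$ play a fallback role when a particular choice of intermediate base point happens to make the key constraint degenerate. Careful bookkeeping of all the $O(\beta)$-errors accumulated in Steps~2–4 yields the advertised constant $40\beta$, with the hypothesis $30\beta$ tuned precisely to produce clean contradictions in the sphere-elimination argument.
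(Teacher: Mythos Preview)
Your approach is essentially the same as the paper's: fix two well-separated base points, define $f$ by signed distance to one of them according to which betweenness relation holds with respect to the pair, and then rule out the exceptional ``circular'' configurations using the remaining well-separated points. The paper organizes this more cleanly by factoring out two standalone lemmas: a \emph{4-points lemma} (given $P,Q,x,y$ with $\partial\{P,Q,x,y\}\le\beta$ and $\sfd(P,Q)>2\beta$, either the canonical $f$ is a $2\beta$-isometry on these four points, or they are $2\beta$-circular) and an \emph{attraction lemma} (if $P,Q,x,y$ are $4\beta$-circular, pairwise $>15\beta$ apart, and $\partial\{P,Q,x,y\}\le\beta$, then every fifth point $z$ with $\partial\{P,Q,x,y,z\}\le\beta$ lies within $15\beta$ of one of them). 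With these in hand the main proof is three lines: if $f$ fails at $x,y$, the four points $P,Q,x,y$ are circular, hence every point of $\X$ is within $15\beta$ of this quadruple, and pigeonhole on the five given points forces two of them within $30\beta$ of each other.

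One point in your sketch is stated incorrectly: in the exceptional configuration you claim the case analysis on $P_3$ forces $t=\sfd(P_1,P_3)\le O(\beta)$ or $D-t\le O(\beta)$, i.e.\ that $P_3$ is near $P_1$ or $P_2$. This is false in general; the correct conclusion (the attraction lemma) is only that $P_3$ is within $O(\beta)$ of one of $\{P_1,P_2,x,y\}$, and $P_3$ may well land near $x$ or $y$. Your instinct that $P_4,P_5$ are then needed as ``fallback'' is right, but the mechanism is the pigeonhole just described, not a degeneracy repair. Once you restate the conclusion of your $P_3$-analysis correctly, your argument and the paper's coincide.
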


Even if not needed in this note, we will prove this at the end of this section,
as it follows easily from the preliminary results needed in the proof of Theorem \ref{thm:l1 menger}.
 More precisely, both Theorem \ref{thm:l1 menger} and Theorem \ref{thm:quantified menger}
  are consequences of the following elementary technical lemmas.
  The first one says that given four points in a metric space, either they are almost circular or they can be embedded in $\rr$
  with an explicit almost isometry.
\begin{lemma}[4-points lemma]\label{prop:4 points lemma}  Let $(\X,\sfd)$ be a metric
space. Let  $P,Q,R,S\in \X$ and $\beta\ge 0 $ such that
$\partial\{P,Q,R,S\}\le \beta$ and $\sfd(P,Q)> 2\beta.$ Then at
least one of the the following holds:
\begin{enumerate}[label=\roman*)]
    \item the map $f: \{P,Q,R,S\}\to \rr$, defined by  $f(Q)\coloneqq\sfd(P,Q)$, and for  $x\in \{P,R,S\}$ by
    \[
    f(x)\coloneqq\begin{cases}
        -\sfd(x,P) & \text{ if } \sfd(x,Q)\ge \max(\sfd(P,Q),\sfd(x,P)),\\
        \sfd(x,P) & \text{ otherwise,}\\
    \end{cases}
    \]  is a $2\beta$-isometry,
    \item the points $P,Q,R,S$ are $2\beta$-circular.
\end{enumerate}
\end{lemma}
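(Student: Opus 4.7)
The plan is to verify the estimate $||f(x)-f(y)|-\sfd(x,y)|\le 2\beta$ for each of the six pairs $\{x,y\}\subset\{P,Q,R,S\}$, invoking conclusion $ii)$ only in a single residual sub-case. Writing $p_Q,p_R,p_S,q_R,q_S,r_S$ for the six pairwise distances and $\epsilon_1,\dots,\epsilon_4\in[0,\beta]$ for the excesses of the four triples $\{P,Q,R\},\{P,Q,S\},\{P,R,S\},\{Q,R,S\}$ (with orientation in each excess dictated by its approximate middle), the verification reduces to a case analysis guided by which of the three vertices is the middle of $\{P,R,S\}$ and, in a delicate sub-case, of $\{Q,R,S\}$.

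First I dispose of the easy pairs. The definition of $f$ gives $f(P)=0$, $f(Q)=p_Q$, and $|f(x)|=\sfd(P,x)$ for $x\in\{R,S\}$ in either case of the definition, so the pairs $\{P,Q\}, \{P,R\}, \{P,S\}$ contribute discrepancy $0$. For a pair $\{Q,x\}$ with $x\in\{R,S\}$, the discrepancy equals, up to sign, the excess of the triple $\{P,Q,x\}$: this is immediate in the first case of the definition from $|f(Q)-f(x)|=p_Q+\sfd(P,x)$, and follows from a short split on the sign of $p_Q-\sfd(P,x)$ in the second case, yielding the bound $\le\beta$ in both.

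For the substantive pair $\{R,S\}$ I split on the middle of $\{P,R,S\}$. If the middle is $R$ or $S$, then $r_S=|p_R-p_S|\pm\epsilon_3$, which combined with the structure of $|f(R)-f(S)|$ yields the bound $\le 2\beta$: in the ``same-case'' sub-cases (both $R,S$ in the same case of the definition of $f$) this is immediate from $|f(R)-f(S)|=|p_R-p_S|$, while in the ``opposite-case'' sub-case one uses the triangle inequality for $\{Q,R,S\}$ together with the excess bounds $\epsilon_j\le\beta$ to deduce $2\min(p_R,p_S)\le \epsilon_1+\epsilon_2+\epsilon_3$ and hence conclude. The genuinely delicate case is $[RPS]$, when $P$ is the middle of $\{P,R,S\}$ so that $r_S=p_R+p_S-\epsilon_3$.

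For this delicate case I further split on the middle of $\{Q,R,S\}$. The key algebraic observation is that combining the four triangular-excess identities linearly with the signs dictated by the chosen middles produces a combined identity in which the coefficient of $p_Q$ is $\pm 2$; this yields either a bound $2p_Q\le 3\beta$ contradicting $p_Q>2\beta$ and so ruling out the sub-case, or, in the single sub-case where both $R,S$ are in the second case of the definition and $Q$ is the middle of $\{Q,R,S\}$, the identity $p_R+p_S=p_Q+O(\beta)$. In this last sub-case a short calculation shows that the three opposite-edge differences $p_Q-r_S$, $p_R-q_S$, $p_S-q_R$ are each signed half-sums of the four $\epsilon_j$'s and hence bounded by $\beta$, which is precisely the $2\beta$-circularity of $ii)$. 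In the remaining sub-cases of $\{Q,R,S\}$, rewriting $r_S$ via the $\{Q,R,S\}$ excess identity in place of the $\{P,R,S\}$ one produces $\epsilon$-cancellations that deliver the sharp bound $||f(R)-f(S)|-r_S|\le 2\beta$. The main obstacle in executing this plan is the bookkeeping across these numerous sub-cases; conceptually, the constant $2\beta$ is sharp, and the factor of $2$ reflects the ``division by $2$'' that emerges when the four triangular-excess identities are combined into a single linear relation among the six distances.
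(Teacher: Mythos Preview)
Your case analysis has a genuine gap. You claim that when the middle of $\{P,R,S\}$ is $R$ or $S$ and $R,S$ lie in opposite cases of the definition of $f$, the triangle inequality for $\{Q,R,S\}$ together with the excess bounds yields $2\min(p_R,p_S)\le \epsilon_1+\epsilon_2+\epsilon_3$, and hence conclusion i). This is false. Take the four-point metric space with
\[
p_Q=10,\quad p_R=1,\quad p_S=11,\quad q_R=11,\quad q_S=1,\quad r_S=10.
\]
All four triangular excesses vanish (so $\beta=0$ works) and $p_Q>2\beta$. The middle of $\{P,R,S\}$ is $R$ (since $p_S=11$ is the largest side), and $R,S$ are in opposite cases of $f$: indeed $q_R=11\ge\max(p_Q,p_R)$ gives $f(R)=-1$, while $q_S=1<\max(p_Q,p_S)$ gives $f(S)=+11$. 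Yet $2\min(p_R,p_S)=2>0=\epsilon_1+\epsilon_2+\epsilon_3$, and $||f(R)-f(S)|-r_S|=|12-10|=2>0=2\beta$. Here the correct outcome is conclusion ii): the three opposite-edge differences $p_Q-r_S$, $p_R-q_S$, $p_S-q_R$ all vanish, so the points are $0$-circular.

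The underlying issue is that circularity arises in more than the single sub-case you isolate. In the paper's organization (splitting first on the signs of $f(R),f(S)$, then on the orderings within $\{P,Q,y\}$, $\{P,x,y\}$, $\{Q,x,y\}$), there are \emph{two} residual sub-cases where the cyclic criterion of Lemma~\ref{lem:circular critierion} is invoked: one when $R,S$ are both in the second case of $f$ with $[xPy]$ and $[xQy]$ (the paper's Case~3.c, matching the sub-case you do identify), and another when $R,S$ are in opposite cases with $[PQy]$, $[Pxy]$ and $[xyQ]$ (the paper's Case~2.b). Since $[Pxy]$ means the middle of $\{P,R,S\}$ is $x\in\{R,S\}$, this second circular configuration falls squarely into your ``middle $R$ or $S$, opposite-case'' branch, which therefore cannot be dispatched by the isometry bound alone.
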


The second technical lemma essentially implies that if $\partial
\X$ is small and $\X$ contains four almost circular points, then
all the points in $\X$ must be close to those points. It is
instructive to keep in mind the example where $P_1,P_2,P_3,P_4$ are
given by two pairs of antipodal points on the circle $S^1$
equipped with the inner distance.

\begin{lemma}[Attraction to circular points]\label{prop:attraction}
Let $(\X,\sfd)$ be a metric space and let $\beta\ge 0$. Suppose
that the points $P_1,P_2,P_3,P_4\in  \X$  are $4\beta$-circular,
$\partial\{P_1,P_2,P_3,P_4\}\le\beta,$ and $\sfd(P_i,P_j)>
15\beta$ for all $i \neq j$.

Then for every $Q\in \X$ at least one of the following holds:
\begin{enumerate}[label=\roman*)]
    \item  $\sfd(Q,P_i)\le 15\beta$ for some $i\in\{1,2,3,4\},$
    \item $\partial\{P_1,P_2,P_3,P_4,Q\}>\beta.$
\end{enumerate}
\end{lemma}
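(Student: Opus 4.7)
The plan is to proceed by contradiction. Assume that $\sfd(Q,P_i)>15\beta$ for every $i\in\{1,2,3,4\}$ while $\partial\{P_1,P_2,P_3,P_4,Q\}\le\beta$; the latter gives $\partial\{x,y,z\}\le\beta$ for every triple drawn from the five points. First I would extract the structure of the four $P_i$'s from the hypotheses. The $4\beta$-circularity identifies the six distances $d_{ij}:=\sfd(P_i,P_j)$ with three matching pairs of common value (up to $4\beta$), call them $a$, $b$, $c$, where after relabeling $d_{12}=d_{34}=a$, $d_{13}=d_{24}=b$, and $d_{14}=d_{23}=c$. Every triangle among the $P_i$ has side lengths approximately $\{a,b,c\}$, so combining $\partial\{P_i,P_j,P_k\}\le\beta$ with the circularity forces $a+b+c-2\max(a,b,c)=O(\beta)$. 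After a further relabeling we may assume $b=\max(a,b,c)$ and hence $b=a+c+O(\beta)$.

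Next I would translate the six bounds $\partial\{Q,P_i,P_j\}\le\beta$ into algebraic constraints on $q_i:=\sfd(Q,P_i)$. For each pair $(i,j)$, the excess bound forces exactly one of the approximate relations
\[
q_i+q_j=d_{ij}+O(\beta),\quad q_i-q_j=d_{ij}+O(\beta),\quad q_j-q_i=d_{ij}+O(\beta),
\]
corresponding to whether $Q$, $P_j$, or $P_i$ is the ``middle'' vertex in the triple (in the sense of the symbol $[\cdot\cdot\cdot]$ introduced before Definition \ref{def:circular points}).

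I would then enumerate the combinations of cases, focusing first on the two ``long'' diagonal pairs $(1,3)$ and $(2,4)$, where $d_{ij}=b$. This yields $3\times 3=9$ joint configurations, and for each one I would run through the three options for the four remaining pairs (where $d_{ij}\in\{a,c\}$). In every resulting sub-case, substituting the relations produces a linear system in $q_1,q_2,q_3,q_4$ from which one extracts at least one of: (i) a direct algebraic contradiction such as $q_i<0$ or $a=-a$; (ii) a bound of the form $a=O(\beta)$, $b=O(\beta)$, or $c=O(\beta)$, which contradicts the hypothesis $d_{ij}>15\beta$; or (iii) a bound $q_i=O(\beta)$ for some $i$, which contradicts $\sfd(Q,P_i)>15\beta$ provided the implicit additive constant is at most $15$. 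Symmetries such as the simultaneous swap $P_1\leftrightarrow P_3$, $P_2\leftrightarrow P_4$ (which preserves all matching pairs) cut down the number of essentially distinct configurations.

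The main obstacle is not conceptual but bookkeeping: the case enumeration is tedious, and the constant $15$ appearing in the statement is precisely what is needed to dominate the worst-case accumulated additive $O(\beta)$ error in option (iii) across all sub-cases. In particular, one expects that the delicate cases are those where the ``diagonal'' pairs are in the middle configuration ($q_1+q_3\approx b$, $q_2+q_4\approx b$), because the additive relations then propagate through the remaining pairs and a careful tally of $\beta$-errors is required to certify that some $q_i$ is forced below $15\beta$. Once this case analysis is complete, the contradiction in every branch concludes the proof.
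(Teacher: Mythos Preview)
Your approach is viable and correct in principle, but it takes a genuinely different route from the paper. The paper argues by the same contradiction, but instead of a raw enumeration of the $3^6$ combinations of middle-vertex relations it first invokes the already-proved 4-points lemma (Lemma~\ref{prop:4 points lemma}): for each triple $\{Q,P_i,P_j,P_k\}$ either there is an explicit $2\beta$-isometry into $\mathbb{R}$, or the four points are $2\beta$-circular; the circular option is ruled out by a short computation using \eqref{eq:non aligned coupled}. With these $2\beta$-isometries in hand, the paper relabels so that $\sfd(P_1,P_3)$ is maximal, and then only needs to split on the position of $Q$ relative to $P_1,P_3$ (two cases, each with a handful of sub-cases), using the real-line orderings $f(P_1)\le f(Q)\le f(P_2)\le f(P_3)$ etc.\ to propagate the constraints. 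This buys a drastic reduction in bookkeeping: a dozen short chains of inequalities versus your proposed case tree of several hundred branches. Your direct linear-system approach is more elementary (it does not need Lemma~\ref{prop:4 points lemma}) and would in principle let you read off the optimal constant, but the accumulated $4\beta$ errors from circularity plus $\beta$ from each excess bound make the constant tracking in the worst branches genuinely delicate; the paper's route sidesteps this because the isometric embeddings already absorb most of the error into a single $2\beta$.
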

The proofs of Lemma \ref{prop:4 points lemma} and Lemma
\ref{prop:attraction} are elementary but rather tedious and can be
found in Appendix \ref{sec:appendix}. Assuming their validity, we
now prove the main result of this subsection,
\textcolor{black}{Theorem \ref{thm:l1 menger}}. {\color{black} We
will split the proof in several lemmas, but before that we fix
some notations.  From now on $(\X,\sfd)$ will be a bounded
1-regular metric space with regularity constant $C_\X\ge 1$, i.e.\
$\X \in \reg_1(C_\X).$ We also set $\mu \coloneqq \mathcal{H}^1$,
the 1-dimensional Hausdorff measure in $(\X,\sfd)$ and } set
    \begin{equation}\label{eq:beta menger}
        \beta\coloneqq \fint_\X \fint_\X \fint_\X\frac{\partial\{x,y,z\}}{r}\, \d \mu(x) \d \mu(y) \d
        \mu(z),
    \end{equation}
    where $r\coloneqq \mathrm{diam}(\X)$.
    Note that the map $\X^3\ni (x,y,z)\mapsto \partial\{x,y,z\}$ is continuous as {\color{black}infimum} of a finite number of continuous functions.
    Without loss of generality  we can assume that $\beta \le \delta$, for some $\delta>0$ small to be chosen later and depending only on $C_{\X}.$ Indeed by taking $f:\X\to \rr$ as $f\equiv 0$, we can always make the left-hand side of \eqref{eq:l1 menger}  less than or equal to one.
    Fix also a constant $C>0$ big enough to be chosen later depending only on $C_\X$.

{\color{black}We start by giving an upper bound on the number
$\partial \X=\sup_{x,y,z\in \X}\partial\{x,y,z\}$.}
\textcolor{black}{\begin{lemma} It holds
    \begin{equation}\label{eq:god points}
        \partial \X \le \frac r{200} .
    \end{equation}
\end{lemma}}
\begin{proof}
 It suffices to consider the case $\partial \X>0$.   Set
    \[
    \beta_\infty \coloneqq \frac1r \partial \X.
    \]
     Let $x_1,x_2,x_3 \in \X$ be such that
    \[
    \frac{\partial\{x_1,x_2,x_3\}}r \ge \beta_\infty/2.
    \]
    Then, setting $\zeta\coloneqq r\beta_\infty/12$, we have
    \[
    \frac{\partial\{x,y,z\}}r\ge \beta_\infty/4, \quad \forall  (x,y,z)  \in B_{\zeta}(x_1)\times B_\zeta(x_2)\times B_\zeta(x_3).
    \]
    Hence by 1-regularity:
    \[
    \beta \ge \mu(\X)^{-3}\int_{B_{\zeta}(x_1)\times B_\zeta(x_2)\times B_\zeta(x_3)} r^{-1}\partial\{x,y,z\} \ge \tilde C \beta_\infty^4,
    \]
    using again $\mu(\X)\le C_\X r$ and where $\tilde C>0$ is a constant depending only on $C_\X.$ This shows that
    \begin{equation}\label{eq:beta infinity bound}
        \beta_\infty \le (\tilde C^{-1}\beta)^{\frac14}\le  (\tilde C^{-1}\delta)^{\frac14}.
    \end{equation}
    Therefore, choosing $\delta$ small enough we have $\beta_\infty \le \frac1{200}$ which is \eqref{eq:god points}.
\end{proof}

{\color{black}
\begin{lemma}\label{lem:PQ points}
   There exists two points $P,Q \in \X$ satisfying:
    \begin{enumerate}[label=\roman*)]
        \item\label{it:1} $\int_{\X\times \X} \frac{\partial \{P,x,y\}}{r}\d \mu(x)\d \mu(y)\le C\beta \mu(\X)^2, \quad\int_{\X\times \X}\frac{\partial \{Q,x,y\}}{r}\d \mu(x)\d \mu(y)\le C \beta \mu(\X)^2$,
        \item\label{it:2} $\int_{\X} \frac{\partial\{P,Q,x\}}r\d \mu(x)\le C \beta \mu(\X),$
        \item $\sfd(P,Q)\ge  r/2.$
    \end{enumerate}
\end{lemma}}
\begin{proof}
     Define the sets
    \[
    \begin{split}
         &A_1\coloneqq \left \{P \in \X \ : \ \fint_{\X\times \X} r^{-1}\partial\{P,x,y\}\d \mu(x)\d \mu(y)\le C \beta \right\} \subset \X,\\
         & A_2\coloneqq \left\{(P,Q)\in \X\times \X : \ \fint_\X r^{-1} \partial\{P,Q,x\} \d \mu(x)\le C \beta \right\}\subset \X\times \X.
    \end{split}
    \]
    By the dominated convergence theorem both $A_1$ and $A_2$ are closed sets.
    By \eqref{eq:beta menger} and the Markov inequality
    \begin{equation}\label{eq:A_1_A_2_ineq}
    \mu(\X\setminus A_1)\le \frac{\mu(\X)}{C},\quad
    (\mu\otimes \mu)((\X\times \X)\setminus A_2)\le \frac{\mu(\X)^2}{C}.
    \end{equation}
    The first inequality above gives
    \[
    (\mu\otimes \mu) (\X\times \X \setminus (A_1\times A_1))\le 2 \mu((\X\setminus A_1) \times \X)\le 2\frac{\mu(\X)^2}{C}.
    \]
    Additionally by 1-regularity we have
    $$(\mu\otimes \mu)(\{(x,y)\in \X\times\X \ : \sfd(x,y)\ge r/2\})\geq C_\X^{-2}\mu(\X)^2/2 ,$$
    where we used that $\mu(\X)\le C_\X r$. Together with \eqref{eq:A_1_A_2_ineq},
    this shows that if we choose $C$ big enough it holds
    \[
    A_2\cap (A_1\times A_1)\cap \{(x,y) \ : \sfd(x,y)\ge r/2\}\neq \emptyset
    \]
    and any couple $(P,Q)$ in this set
    \textcolor{black}{has the three desired properties.}
\end{proof}
From now on we fix two points $P,Q\in \X$ as given by Lemma \ref{lem:PQ points}. We  define the map $f: \X\to \rr$ by imposing $f(P)\coloneqq0$, $f(Q)\coloneqq\sfd(P,Q)$ and
    \[
    f(x)\coloneqq\begin{cases}
        -\sfd(x,P) & \text{ if } [xPQ],\\
        \sfd(x,P) & \text{ otherwise,}\\
    \end{cases}
    \]
    for every $x\notin\{P,Q\}$. Recall that $[xPQ]$ means
    $\sfd(x,Q)\geq \max\{\sfd(x,P),\sfd(P,Q)\}$.
\textcolor{black}{\begin{lemma}
    $f$ is Borel measurable and
    \begin{equation}\label{eq:f trick}
        ||f(x)-f(y)|-\sfd(x,y)|\le 2\min(\sfd(x,P),\sfd(y,P)), \quad  x,y\in \X.
    \end{equation}
\end{lemma}}
\begin{proof}
    The Borel measurability follows noting that the restriction of $f$ to either the closed set $\{x \ : \ [xPQ] \text{ holds}\}$ or its complement is continuous (in fact 1-Lipschitz).
    To show \eqref{eq:f trick}, note that by the triangle inequality $|\sfd(x,y)-\sfd(y,P)|\le \sfd(x,P)$ and that by definition $|f(y)|=\sfd(y,P)$ and $|f(x)|=\sfd(x,P)$, hence
    {\color{black}
    \begin{align*}
        \big||f(x)-f(y)|-\sfd(x,y)\big|&=\big||f(x)-f(y)|-\sfd(x,y)+ |f(y)|-|f(y)|\big|\\
        &\le  \big||f(x)-f(y)|-|f(y)|\big| + \big|-\sfd(x,y)+|f(y)|\big|\\
        &\le  |\sfd(x,y)-|f(y)||+|f(x)|\le 2\sfd(x,P).
    \end{align*}}
    Arguing in the same for $y$ we get \eqref{eq:f trick}.
\end{proof}

We are now ready to prove the main result of this section.

\begin{proof}[Proof of Theorem \ref{thm:l1 menger}]
{\color{black}Recall that our goal is to show that
 \begin{equation}\label{eq:goal}
        \fint_{\mathcal S} \frac{||f(x)-f(y)|-\sfd(x,y)|}{\diam(\X)}\d\mu(x)\d \mu(y) \le \tilde C \beta \mu(\X)^2,
    \end{equation}
    holds with $\mathcal S=\X\times \X$ and for some constant $\tilde C$ depending only on $C_\X.$ We proceed by proving \eqref{eq:goal} for different sets $\mathcal S$ that partition $\X\times \X$.}
    Define
    \begin{align*}
        &\mathcal{G}\coloneqq \{(x,y) \in \X\times \X\ :  \ ||f(x)-f(y)|-\sfd(x,y)|\le 5\partial\{x,y,P,Q\}\},\\
        &\mathcal{B}\coloneqq \X\times \X\setminus \mathcal{G}.
    \end{align*}
    {\color{black}Estimate \eqref{eq:goal} holds with $\mathcal S=\mathcal G$. To see this, by definition of $\mathcal G$ we have}
    \[
    \int_{\mathcal{G}} r^{-1}||f(x)-f(y)|-\sfd(x,y)|\d\mu(x)\d \mu(y)\le  \int_{\mathcal{G}} 5r^{-1}\partial\{x,y,P,Q\}\d\mu(x)\d \mu(y),
    \]
    {\color{black}from this and the obvious inequality
    $$\partial\{x,y,P,Q\}\le \partial\{P,Q,x\}+ \partial\{P,Q,y\}+\partial\{Q,x,y\}+\partial\{P,x,y\},$$
    we obtain}
    \begin{align*}
         \int_{\mathcal{G}} &r^{-1}||f(x)-f(y)|-\sfd(x,y)|\d\mu(x)\d \mu(y)\\
         &\le   5r^{-1}\int_{\mathcal{G}}  \partial\{P,Q,x\}+ \partial\{P,Q,y\}+\partial\{Q,x,y\}+\partial\{P,x,y\}\d\mu(x)\d \mu(y),
    \end{align*}
    {\color{black}from which plugging in the estimates $i)$  and $ii)$ in Lemma \ref{lem:PQ points}, which are satisfied by $P$ and $Q$, we have}
    \begin{equation*}\label{eq:booung G}
        \begin{split}
\int_{\mathcal{G}}& r^{-1}||f(x)-f(y)|-\sfd(x,y)|\d\mu(x)\d \mu(y)\le \\
            &\le  10 \mu(\X)\int_{\X}  r^{-1}\partial\{P,Q,x\}\d \mu(x)+5\int_{\X\times \X} r^{-1}\partial\{P,x,y\}\d \mu(x)\d \mu(y)\\
            &+5\int_{\X\times \X} r^{-1}\partial\{Q,x,y\}\d \mu(x)\d \mu(y)\le 20C \beta \mu(\X)^2.
        \end{split}
    \end{equation*}
    {\color{black}Our goal is  now show that \eqref{eq:goal} holds with $\mathcal S=\mathcal{B}$.} Thanks to \eqref{eq:god points} we have that $\sfd(P,Q)>2\partial\{x,y,P,Q\} $ for every $x,y \in \X.$ Hence for every $x,y\in \mathcal B$ we can apply Lemma \ref{prop:4 points lemma} to the points $x,y,P,Q$ and  and get
    \begin{align*}
        \mathcal{B}\subset\{(x,y)\in\X\times \X\ : \  \text{the points $x,y,P,Q$ are $2\partial\{x,y,P,Q\}$-circular}\},
    \end{align*}
    indeed the first case in Lemma \ref{prop:4 points lemma} cannot happen by definition of $\mathcal B.$ We further divide $\mathcal{B}$ as
    \begin{align*}
        &\mathcal{B}_1\coloneqq \{ (x,y)  \in \mathcal{B}\ : \ \sfd(\{x,y\},\{P,Q\})\le 30\partial\{x,y,P,Q\} \},\\
        &\mathcal{B}_2\coloneqq \mathcal{B}\setminus \mathcal{B}_1.
    \end{align*}
 {\color{black}Estimate \eqref{eq:goal} holds with $\mathcal S=\mathcal{B}_1$.}  To see this let $x,y \in \mathcal B_1.$ If  $\sfd(x,P) < 30\partial\{x,y,P,Q\}$ or  $\sfd(y,P) < 30\partial\{x,y,P,Q\}$, by \eqref{eq:f trick} we have
    \begin{equation}\label{eq:68}
        ||f(x)-f(y)|-\sfd(x,y)|\le 64 \partial\{x,y,P,Q\}.
    \end{equation}
    If instead $\sfd(x,Q) < 30\partial\{x,y,P,Q\}$ (or $\sfd(y,Q) < 30\partial\{x,y,P,Q\}$), by the  $2\partial\{x,y,P,Q\}$-circularity, we have $\sfd(y,P) < 32\partial\{x,y,P,Q\}$ (or $\sfd(x,P) < 32\partial\{x,y,P,Q\}$) and so by \eqref{eq:f trick} we get again \eqref{eq:68}.
    Hence using \eqref{eq:68} and then estimate $ii)$ of Lemma \ref{lem:PQ points} we have
    \begin{align*}
          \int_{\mathcal{B}_1} r^{-1}||f(x)-f(y)|-\sfd(x,y)|\d\mu(x)\d \mu(y)&\le  \int_{\mathcal{B}_1} \frac{ 64 \partial\{x,y,P,Q\}}{r}\d\mu(x)\d \mu(y)\\
          &\le 64\cdot 4 \beta C\mu(\X)^2.
    \end{align*}
   {\color{black} It remains to prove that \eqref{eq:goal} holds with $\mathcal S=\mathcal B_2.$  This is the most difficult set to deal with, because} the   couples $(x,y)\in \mathcal{B}_2$ are circular and spread apart. To estimate their contribution we will need to consider also  the other points in $\X.$  For $x,y \in \mathcal{B}_2$ define the number $D(x,y)$ as the minimum distance of two points in $\{x,y,P,Q\}$. By the definition of $\mathcal{B}_2$, by \eqref{eq:god points} and by $2\partial\{x,y,P,Q\}$-circularity it holds:
    \begin{equation}\label{eq:the right condition}
        D(x,y) > 30\partial\{x,y,P,Q\}.
    \end{equation}
    We claim that
    \begin{equation}\label{eq:not far}
        D(x,y) \le  15 \beta_\infty r, \quad  x,y\in \mathcal{B}_2.
    \end{equation}
    Indeed suppose this is not the case, i.e., $D(x,y)>15 \beta_\infty r=15\partial \X.$ Then by Lemma \ref{prop:attraction} applied to the whole $\X$, with the points $x,y,P,Q$ and with $\beta=\partial \X$ (note that $x,y,P,Q$ are $4\partial \X$-circular because $\partial\{x,y,P,Q\}\le \partial \X$ and $x,y\in \mathcal{B}_2\subset \mathcal{B}$),
    it must hold that
    $$\sfd(z,\{P,Q,x,y\})\le 15\partial \X=15r\beta_\infty\overset{\eqref{eq:beta infinity bound} }{\le} (\tilde C^{-1}\delta)^\frac14 15 \diam(\X),
     \quad z \in \X.$$
    This however contradicts the 1-regularity of $\X$, provided $\delta$ is small enough, hence \eqref{eq:not far} holds. Using \eqref{eq:not far} we can also conclude that
    \begin{equation}\label{eq:true Dxy}
        \sfd(P,Q)>D(x,y),\quad  \sfd(x,y)>D(x,y).
    \end{equation}
    Indeed combining \eqref{eq:god points} with \eqref{eq:not far} we get $\sfd(P,Q)>3D(x,y).$ Then using  $\sfd(P,Q)>3D(x,y)$ and the $2\partial\{x,y,P,Q\}$-circularity of $x,y,P,Q$ we get
    $$\sfd(x,y)\ge \sfd(P,Q)-2\partial\{x,y,P,Q\}\overset{\eqref{eq:the right condition}}{\ge } 3D(x,y)-D(x,y)\ge 2D(x,y)>0,$$
    which shows \eqref{eq:true Dxy}.
    For fixed $x,y \in \mathcal{B}_2$ we now consider all the points in $\X$ and  divide them into two sets, the ``attracted'' and the ``non attracted'' points:
    \begin{align*}
        &\mathcal{A}(x,y)\coloneqq \{z \in \X \ : \ \sfd(z,\{P,Q,x,y\})\le 15D(x,y)  \},\\
        &\overline {\mathcal{A}}(x,y)\coloneqq \X\setminus \mathcal{A}(x,y).
    \end{align*}
    The set $\mathcal{A}(x,y)$ is small in measure. Indeed by 1-regularity:
    \[
    \mu(\mathcal{A}(x,y))\le 4 C_\X 15 D(x,y)\overset{\eqref{eq:not far}}{\le}  900 C_\X r\beta_\infty\overset{\eqref{eq:beta infinity bound}}{\le } 900 C_\X^2 \mu(\X) (\tilde C^{-1}\delta)^\frac14.
    \]
    Hence assuming $\delta$ small enough we have $\mu(\overline {\mathcal{A}}(x,y))\ge \mu(\X)/2$.
    We now apply Lemma \ref{prop:attraction} with the points $x,y,P,Q$ with $\beta\coloneqq D(x,y)/20$ (note that these points are $4\beta$-circular \textcolor{black}{since they are $2\partial\{x,y,P,Q\}$-circular and $4\beta>2\partial\{x,y,P,Q\}$
     by \eqref{eq:the right condition}). By Lemma \ref{prop:attraction} we}  obtain that
    \begin{align}\label{eq:barA}
        \overline {\mathcal{A}}(x,y)\subset \{z \in \X \ : \  \partial\{z,x,y,P,Q\}> D(x,y)/20 \},
    \end{align}
    since the first option in Lemma \ref{prop:attraction} can not happen by definition of $ \overline {\mathcal{A}}(x,y).$
    Because $\partial\{x,y,P,Q\}<D(x,y)/30$ (recall \eqref{eq:the right condition}), the \textcolor{black}{inclusion \eqref{eq:barA}} implies that
    \begin{equation}\label{eq:lowerb}
        \partial\{z,x,y\}+\partial\{z,x,P\}+\partial\{z,x,Q\}+\partial\{z,y,P\}+\partial\{z,y,Q\}>
        \frac{D(x,y)}{20},  \quad z \in   \bar {\mathcal{A}}(x,y).
    \end{equation}
    Note now that by \eqref{eq:true Dxy} and the definition of $D(x,y)$ we have
    $$D(x,y)=\min\big (\sfd(x,P),\sfd(x,Q),\sfd(y,P),\sfd(y,Q)\big ).$$ Moreover, since $x,y,P,Q$ are $D(x,y)$-circular (\textcolor{black}{since $D(x,y)>2\partial\{x,y,P,Q\}$ }by \eqref{eq:the right condition}), we get that $\sfd(x,P)\le \sfd(y,Q)+D(x,y)$ and $\sfd(y,P)\le \sfd(x,Q)+D(x,y)$. Combining the last two observations we deduce that $\min(\sfd(x,P),\sfd(y,P))\le 2D(x,y)$.
    Hence by \eqref{eq:f trick}
    \begin{equation}\label{eq:f le D}
        ||f(x)-f(y)|-\sfd(x,y)| \le 4 D(x,y), \quad x,y \in \mathcal{B}_2.
    \end{equation}
Recalling $\mu(\overline{\mathcal{A}}(x,y))\geq \mu(X)/2$,   we can finally estimate
    \begin{align*}
        &\int_{(x,y)\in \mathcal{B}_2} r^{-1}||f(x)-f(y)|-\sfd(x,y)|\d\mu(x)\d \mu(y)=  \\
        &\int_{(x,y)\in \mathcal{B}_2} \mu(\bar {\mathcal{A}}(x,y))^{-1} \int_{z \in \bar {\mathcal{A}}(x,y)} r^{-1}||f(x)-f(y)|-\sfd(x,y)| \d \mu(z)\d\mu(x)\d \mu(y)\\
        &\overset{\eqref{eq:f le D}}{\le}  8\mu(\X)^{-1} \int_{(x,y)\in \mathcal{B}_2}\int_{z \in \bar {\mathcal{A}}(x,y)} r^{-1}D(x,y)\d \mu(z)\d\mu(x)\d \mu(y)\\
        &\overset{\eqref{eq:lowerb}}{\le} \frac{8\cdot 20}{\mu(\X) r} \int_{\X^3} \partial\{z,x,y\}+\partial\{z,x,P\}+\partial\{z,x,Q\}+\partial\{z,y,P\}+\partial\{z,y,Q\}\d \mu(z)\d\mu(x)\d \mu(y)\\
        &\le 8\cdot 20 (4C+1) \beta \mu(\X)^2,
    \end{align*}
    where in the last inequality we used the definition of $\beta$ in \eqref{eq:beta menger}, and the property \ref{it:1} of the points $P,Q$. This shows that \eqref{eq:goal} holds with $\mathcal S=\mathcal B_2$. Since we showed previously that \eqref{eq:goal} holds with $\mathcal S \in \{\mathcal G,\mathcal B_1\}$ and $\X\times \X=\mathcal G\cup \mathcal B_1\cup \mathcal B_2 $,  the proof is concluded.
\end{proof}

We conclude with the proof of Theorem \ref{thm:quantified menger} which, even if not used in the sequel, we believe to be interesting on its own.
\begin{proof}[Proof of Theorem \ref{thm:quantified menger}]
    If $\diam(\X)\le 40\beta$ the statement is trivial. Hence we can assume the existence of two points $P,Q\in\X$ such that $\sfd(P,Q)>40\beta.$ We define the map $f: \X\to \rr$ as follows. Set $f(P)\coloneqq0$, $f(Q)\coloneqq\sfd(P,Q)$ and for any other point $x\in \X$
    \[
    f(x)\coloneqq\begin{cases}
        -\sfd(x,P) & \text{ if } \sfd(x,Q)\ge \max(\sfd(P,Q),\sfd(x,P)),\\
        \sfd(x,P) & \text{ otherwise.}\\
    \end{cases}
    \]
    We need to prove that for every $x,y \in \X$ it holds
    \begin{equation}\label{eq:40beta_approx_isom}
    ||f(y)-f(x)|-\sfd(x,y)|\le 40 \beta.
    \end{equation}
    Fix $x,y \in \X$. If $ ||f(y)-f(x)|-\sfd(x,y)|\le 5 \beta$ there is nothing to prove, hence we can assume that  $||f(x)-f(y)|-\sfd(x,y)|>5\beta$. Hence from Lemma \ref{prop:4 points lemma}  we deduce that the points $x,y,P,Q$ are $2\beta$-circular.
    Observe that this implies that $\sfd(x,y)>15\beta$. Suppose now that $\sfd(x,P)<20\beta$, then by the triangle inequality $|\sfd(x,y)-\sfd(y,P)|<20\beta$, hence
    $$||f(y)-f(x)|-\sfd(x,y)|=||f(y)-f(x)|-\sfd(x,y)\pm |f(y)||\le |\sfd(x,y)-|f(y)||+|f(x)|< 40\beta,$$
    because $|f(y)|=\sfd(y,P)$ and $|f(x)|=\sfd(x,P).$  The same holds if $\sfd(y,P)<20\beta$. Hence we are left
    to prove \eqref{eq:40beta_approx_isom}
   in the case  $\sfd(x,P),\sfd(y,P)\ge 20\beta$, which thanks to $2\beta$-circularity of $\{x,y,P,Q\}$ gives also that
    $\sfd(y,Q),\sfd(x,Q)>15\beta.$ Recall also that $\sfd(P,Q)\ge 40\beta$ and $\sfd(x,y)\ge 15 \beta.$ Then we can apply Lemma \ref{prop:attraction} and  deduce that for every $z\in \X$ it holds that $\sfd(z,R)\le 15\beta$ for some $R\in \{P,Q,x,y\}$ (note that the second alternative in  Lemma \ref{prop:attraction} does not occur because $\partial \X\le \beta$). This contradicts the fact that $(\X,\sfd)$ contains five points at pairwise  distance strictly greater  than $30\beta$ and concludes the proof.
\end{proof}

\appendix

\section{Almost circular points}\label{sec:appendix}

This appendix contains the proofs of Lemmas \ref{prop:4 points
lemma} and  \ref{prop:attraction} concerning almost circular
points. We start by introducing short-hand notation that we will
often use in the proofs of this section. Let $a,b,c$ be real
numbers. We write $a\sim_{\eps} b$ to denote $|a-b|\le \eps$. This
convention is used exclusively within this section, so that there
should be no confusion with the notation introduced at the beginning of
Section \ref{s:prelim}. Note that if $a\sim_{\eps} b$ and
$c\sim_{\eps'} d$, then $a-c\sim_{\eps+\eps'} b-d$, and that
$a\sim_{\eps} b$ if and only if $a-c\sim_{\eps} b-c.$

We also recall from Section \ref{ss:ConstrGoodMapsIntoR} that we
write $[xyz]$ for points $x,y,z$ in a  metric space $(\X,\sfd)$ if
\[
\sfd(x,y)+\sfd(y,z)-\sfd(x,z)=\partial(\{x,y,z\}),
\]
which is equivalent to
\[
\sfd(x,z)\ge\max( \sfd(x,y),\sfd(y,z)).
\]
Moreover, $\partial S = \sup_{\{x,y,z\}\subset S}\partial(\{x,y,z\})$.

We start with a simple criterion to check that four points are
almost circular (recall Definition \ref{def:circular points}).
\begin{lemma}\label{lem:circular critierion}
    Let $(\X,\sfd)$ be a metric space and $x_1,x_2,x_3,x_4 \in \X$ such that $\partial\{x_1,x_2,x_3,x_4\}\le \delta$ and
    \begin{equation}\label{eq:cyclic}
        [x_1x_2x_3],\,  [x_2x_3x_4],\,  [x_3x_4x_1],\,  [x_4x_1x_2]
    \end{equation}
    hold. Then the points $x_1,x_2,x_3,x_4$ are $2\delta$-circular.
\end{lemma}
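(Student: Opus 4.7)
The plan is to translate the four betweenness hypotheses into numerical inequalities on the pairwise distances $a_{ij} := \sfd(x_i,x_j)$, and then chase these inequalities to verify the three required bounds $|a_{ij}-a_{kl}|\le 2\delta$ for the three partitions of $\{1,2,3,4\}$ into complementary pairs, namely $\{1,2\}|\{3,4\}$, $\{2,3\}|\{1,4\}$, and $\{1,3\}|\{2,4\}$.

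First I would unpack each hypothesis $[x_ix_jx_k]$: by definition this means $a_{ij}+a_{jk}-a_{ik}=\partial(\{x_i,x_j,x_k\})\le\delta$, while the triangle inequality in $\X$ gives the matching upper bound $a_{ik}\le a_{ij}+a_{jk}$. Applying this to the four cyclic triples $\{x_1x_2x_3\}$, $\{x_2x_3x_4\}$, $\{x_3x_4x_1\}$, $\{x_4x_1x_2\}$ squeezes each of the two ``diagonal'' distances $a_{13}$ and $a_{24}$ between two explicit sums: $a_{12}+a_{23}-\delta\le a_{13}\le a_{12}+a_{23}$, $a_{34}+a_{41}-\delta\le a_{13}\le a_{34}+a_{41}$, and analogously for $a_{24}$ using the other two triples.

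From the double pinching of $a_{13}$ I would deduce $|a_{12}+a_{23}-a_{34}-a_{41}|\le\delta$, and from that of $a_{24}$ similarly $|a_{23}+a_{34}-a_{41}-a_{12}|\le\delta$. Adding and subtracting these two scalar inequalities gives at once
\[
|a_{12}-a_{34}|\le\delta\quad\text{and}\quad|a_{23}-a_{41}|\le\delta,
\]
which already settles two of the three partitions with room to spare.

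The remaining (and slightly more delicate) estimate $|a_{13}-a_{24}|\le 2\delta$ is the main obstacle: unlike the ``side'' distances, the two diagonals are not directly compared by any single hypothesis, and a naive chaining loses a factor of three rather than two. The fix is to combine \emph{one} triangle inequality with \emph{one} betweenness inequality, rather than two of each. Concretely, $a_{13}\le a_{12}+a_{23}$ (triangle) together with $a_{24}\ge a_{12}+a_{41}-\delta$ (from $[x_4x_1x_2]$) yields $a_{13}-a_{24}\le (a_{23}-a_{41})+\delta$, and then the bound $|a_{23}-a_{41}|\le\delta$ from the previous step gives $a_{13}-a_{24}\le 2\delta$. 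The reverse inequality is obtained symmetrically by swapping the roles of $a_{13}$ and $a_{24}$. Collecting the three bounds finishes the proof.
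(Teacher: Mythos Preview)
Your proof is correct and follows essentially the same approach as the paper: translate the four betweenness hypotheses into approximate additivity relations for the pairwise distances, then combine them algebraically. The only cosmetic difference is in the final diagonal estimate: the paper sums the four relations to get $2a_{13}\sim_{2\delta} a_{12}+a_{23}+a_{14}+a_{34}\sim_{2\delta} 2a_{24}$, whereas you pair one triangle inequality with one betweenness inequality and feed in the already-obtained side bound; both routes yield $|a_{13}-a_{24}|\le 2\delta$.
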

\begin{proof} According to our definitions, we have to check that
$\sfd(x_i,x_j)\sim_{2\delta}\sfd(x_k,x_l)$ for any choice of
distinct $i,j,k,l\in \{1,2,3,4\}$.
    The assumptions imply that
    \begin{align*}
        &i)\,  \sfd(x_1,x_3)\sim_\delta\sfd(x_1,x_2)+\sfd(x_2,x_3), \quad ii)\, \sfd(x_2,x_4)\sim_\delta \sfd(x_2,x_3)+\sfd(x_3,x_4)\\
        &iii)\,\sfd(x_1,x_3)\sim_\delta\sfd(x_1,x_4)+\sfd(x_3,x_4), \quad iv)\,\sfd(x_2,x_4)\sim_\delta \sfd(x_1,x_2)+\sfd(x_1,x_4).
    \end{align*}
    Subtracting $i)$ and $iii)$ and subtracting $ii)$ and $iv)$ we get
    \begin{equation}\label{eq:x1x2x3x4}
        \begin{split}
            &\sfd(x_1,x_2)+\sfd(x_2,x_3)\sim_{2\delta}\sfd(x_1,x_4)+\sfd(x_3,x_4)\\
            &\sfd(x_2,x_3)+\sfd(x_3,x_4)\sim_{2\delta}\sfd(x_1,x_2)+\sfd(x_1,x_4).
        \end{split}
    \end{equation}
    Subtracting the two in \eqref{eq:x1x2x3x4} we obtain $\sfd(x_1,x_2)-\sfd(x_3,x_4)\sim_{4\delta}\sfd(x_3,x_4)-\sfd(x_1,x_2),$
     which gives $\sfd(x_1,x_2)\sim_{2\delta}\sfd(x_4,x_3).$ Switching the order of the second in \eqref{eq:x1x2x3x4}
     and subtracting again the two shows that
    $\sfd(x_2,x_3)-\sfd(x_1,x_4)\sim_{4\delta}\sfd(x_1,x_4)-  \sfd(x_2,x_3),$ from which $\sfd(x_2,x_3)\sim_{2\delta}\sfd(x_1,x_4).$ Finally summing $i)$ and $iii)$ and summing $ii)$ and $iv)$ gives
    \[
    \begin{split}
        &2\sfd(x_1,x_3)\sim_{2\delta}\sfd(x_1,x_2)+\sfd(x_2,x_3)+\sfd(x_1,x_4)+\sfd(x_3,x_4)\\
        &2\sfd(x_2,x_4)\sim_{2\delta}\sfd(x_1,x_2)+\sfd(x_2,x_3)+\sfd(x_1,x_4)+\sfd(x_3,x_4).
    \end{split}
    \]
    Hence $2\sfd(x_1,x_3)\sim_{4\delta}2\sfd(x_2,x_4)$ and so $\sfd(x_1,x_3)\sim_{2\delta}\sfd(x_2,x_4),$ which concludes the proof.
\end{proof}

Next we prove the \emph{$4$-points lemma}, which gives a
quantitative condition for four points $\{P,Q,R,S\}$ to either
admit an (explicitly given) $2\beta$-isometry $f$ into
$\mathbb{R}$, or to be $2\beta$-circular. (A similar conclusion
was obtained under different assumptions in \cite[Lemma
2.2]{MR2163108}.)

\begin{proof}[Proof of Lemma \ref{prop:4 points lemma}]
It is sufficient to prove the lemma for $\beta>0.$
As in the statement, we define $f:\{P,Q,R,S\}\to
\mathbb{R}$ by $f(Q)\coloneqq\sfd(P,Q)$, and for  $x\in \{P,R,S\}$
by
    \[
    f(x)\coloneqq\begin{cases}
        -\sfd(x,P) & \text{ if } \sfd(x,Q)\ge \max\{\sfd(P,Q),\sfd(x,P)\},\\
        \sfd(x,P) & \text{ otherwise.}\\
    \end{cases}
    \]
It is straightforward to see that $f$ satisfies the rough isometry
condition at least with respect to the points $P$ and $Q$. Indeed,
 by definition, $||f(P)-f(x)|-\sfd(P,x)|=0$ for every $x\in \{Q,R,S\}$. Next we show that  $\sfd(x,Q)\sim_\beta|f(x)-f(Q)|$ for every $x \in\{P,R,S\} $. If $[xPQ]$, and so $f(x)=-\sfd(x,P)$, this is immediate because $\partial \{x,P,Q\}\le \beta$, hence we assume $f(x)=\sfd(x,P)$. In this case, if we have $[xQP]$, then
    \[
    ||f(x)-f(Q)|-\sfd(x,Q)|= |\sfd(x,P)-\sfd(P,Q)-\sfd(x,Q)|\le \beta.
    \]
    If instead  $[PxQ]$, then
    \[
    ||f(x)-f(Q)|-\sfd(x,Q)|= |\sfd(P,Q)-\sfd(P,x)-\sfd(x,Q)|\le \beta.
    \]
    Thanks to these observations, to show that $f:\{P,Q,R,S\}\to \rr$ is a $2\beta$-isometry, it is enough to show that
    \begin{equation}\label{eq:xy beta isom}
        ||f(R)-f(S)|-\sfd(R,S)|\le 2\beta.
    \end{equation}
    Hence to prove the lemma it is sufficient to show that either \eqref{eq:xy beta isom} holds or  that the points $\{P,Q,R,S\}$
    are $2\beta$-circular. Throughout the proof, we will
    repeatedly use the following fact, often without mentioning it
    explicitly: If $[x_1 x_2 x_3]$ holds, then
\begin{displaymath}
\partial
    (\{x_1,x_2,x_3\})\leq \beta \quad \Leftrightarrow \quad
    \sfd(x_1,x_2)+\sfd(x_2,x_3)\leq \sfd(x_1,x_3)+\beta.
\end{displaymath}

    It will be more convenient to name $x\coloneqq R$ and $y\coloneqq S$, to better distinguish these points from $P$ and $Q.$
    Up to swapping $x$ and $y$ we can assume that $f(x)\le f(y)$, hence we need  to consider only the following three cases:
\begin{enumerate}
\item $f(x)=-\sfd(x,P)$ and $f(y)=-\sfd(y,P)$, \item
$f(x)=-\sfd(x,P)$ and $f(y)=+\sfd(y,P)$, \item $f(x)=+\sfd(x,P)$
and $f(y)=+\sfd(y,P)$.
\end{enumerate}

\medskip

    \noindent \textbf{Case 1}:  $f(x)=-\sfd(x,P)$, $f(y)=-\sfd(y,P)$,
    which is equivalent to the validity of both $[xPQ]$ and
    $[yPQ]$. Using the assumption $f(x)\le f(y)$ and the
    triangle inequality, we see that in this case, \eqref{eq:xy beta isom} is  equivalent
    to
    \begin{equation}\label{eq:xy beta isomCase1}
    \sfd(x,y)+\sfd(y,P)\leq \sfd(x,P)+2\beta.
    \end{equation}
    We need to consider also the point $Q$. At least one of the conditions $[xQy]$, $[xyQ]$ and $[yxQ]$ holds. Suppose first that $[xQy]$ holds.
Using the assumptions $\partial \{P,Q,R,S\}\leq \beta$ and
$\sfd(P,Q)\geq 2\beta$, we find that
\begin{align*}
\sfd(x,y)+4\beta&\leq \left[\sfd(x,P)+\sfd(P,Q)\right]+\left[\sfd(y,P)+\sfd(P,Q)\right]\\
&\overset{[xPQ],[yPQ]}{\leq}\sfd(x,Q)+\sfd(y,Q)+2\beta.
\end{align*}
However, this leads to a contradiction since then
\begin{align*}
\sfd(x,y)\leq \sfd(x,Q)+\sfd(y,Q)-2\beta \overset{[xQy]}{\leq}
\sfd(x,y)-\beta,
\end{align*}
which is impossible (since $\beta>0$). Thus $[xQy]$ in Case 1 cannot occur.
If instead $[xyQ]$, we have
    \[
    \sfd(x,P)+\sfd(P,Q)\ge \sfd(x,Q)\overset{[xyQ]}{\ge} \sfd(x,y)+\sfd(y,Q)-\beta \overset{[yPQ]}{\ge} \sfd(x,y)+\sfd(y,P)+\sfd(P,Q)-2\beta,
    \]
    which shows \eqref{eq:xy beta isomCase1}. Finally if $[yxQ]$ holds, we have
    \[
    \sfd(y,P)+\sfd(P,Q)\ge \sfd(x,Q)\overset{[yxQ]}{\ge} \sfd(x,y)+\sfd(x,Q)-\beta \overset{[xPQ]}{\ge} \sfd(x,y)+\sfd(x,P)+\sfd(P,Q)-2\beta,
    \]
    which coupled with the assumption $\sfd(x,P)\ge \sfd(y,P)$ shows again \eqref{eq:xy beta isomCase1}.

\medskip

    \noindent \textbf{Case 2}:  $f(x)=-\sfd(x,P)$, $f(y)=\sfd(y,P)$, which means that $[xPQ]$ holds, and $[yPQ]$ does not hold. In this case \eqref{eq:xy beta
    isom} is equivalent to
    \begin{equation}\label{eq:xy beta
isomCase2}
    \sfd(y,P)+\sfd(x,P)\leq \sfd(x,y)+2\beta.
    \end{equation}
    Since $[yPQ]$ does not hold at least one of the two options $[PyQ]$, $[PQy]$ must
 be valid, so we only need to prove that in these two sub-cases either \eqref{eq:xy beta
    isomCase2} is true or that $x,y,P,Q$ are $2\beta$-circular.

    \smallskip

    \noindent \textbf{\emph{Case 2.a}}: $[PyQ]$ holds. We have
\begin{align*}\sfd(y,P)+\sfd(x,P)&\overset{[PyQ]}{\leq}\sfd(P,Q)-\sfd(y,Q)+\sfd(x,P)+\beta\\
&\overset{[xPQ]}{\leq} \sfd(x,Q)-\sfd(y,Q)+2\beta\\&\leq
\sfd(x,y)+2\beta,\end{align*} which yields \eqref{eq:xy beta
    isomCase2} in this case.

\smallskip

    \noindent \textbf{\emph{Case 2.b}}: $ [PQy]$ holds. If
    $[xPy]$, then \eqref{eq:xy beta
    isomCase2} trivially holds true.
     If instead $[xyP]$, then
\begin{align*}
\sfd(P,Q)&\overset{[xPQ]}{\leq} \sfd(x,Q)-\sfd(x,P)+\beta
\overset{[xyP]}{\leq} \sfd(x,Q)-\sfd(x,y)-\sfd(y,P)+2\beta\\
&\overset{[PQy]}{\leq}
\sfd(x,Q)-\sfd(x,y)-\sfd(P,Q)-\sfd(Q,y)+3\beta\\
&\leq -\sfd(P,Q)+3\beta.
\end{align*}
Therefore, $\sfd(P,Q)\leq 3\beta/2$, which is impossible since
$\sfd(P,Q)>2\beta$ by assumption.

 Hence it remains to
consider the case when $[Pxy]$ holds.
    We need to consider now also the point $Q$ and the cases $[yQx]$,$[yxQ]$, and $[xyQ]$. If $[yQx]$, then
    \[
    \begin{split}
        \sfd(x,y)&\overset{[yQx]}{\ge} \sfd(y,Q)+\sfd(Q,x)-\beta \overset{[xPQ]}{\ge} \sfd(y,Q)+\sfd(x,P)+\sfd(P,Q)-2\beta\\
        &\ge  \sfd(y,P)+\sfd(x,P)-2\beta,
    \end{split}
    \]
    which  implies \eqref{eq:xy beta
    isomCase2} in this case.
If instead $[yxQ]$, then \begin{align*}
\sfd(P,Q)&\overset{[PQy]}{\leq} \sfd(P,y)-\sfd(Q,y)+\beta
\overset{[yxQ]}{\leq} \sfd(P,y)-\sfd(y,x)-\sfd(x,Q)+2\beta\\
&\overset{[xPQ]}{\leq}
\sfd(P,y)-\sfd(x,y)-\sfd(x,P)-\sfd(P,Q)+3\beta\\
&\leq -\sfd(P,Q)+3\beta.
\end{align*}
Therefore, analogously as in a previous case, $\sfd(P,Q)\leq
3\beta/2$, which is impossible since $\sfd(P,Q)>2\beta$ by
assumption.

 Hence we are left with the case when $[xyQ]$ holds. Summarizing the current
    assumptions, we are in the situation where
 $[PQy],[xPQ],[Qyx],[yxP]$ hold. Applying Lemma \ref{lem:circular critierion} we obtain that $x,y,P,Q$ are $2\beta$-circular.

\medskip

    \noindent \textbf{Case 3}:  $f(x)=\sfd(x,P)$, $f(y)=\sfd(y,P)$, that is, neither $[xPQ]$ nor $[yPQ]$
    holds. Since $\sfd(x,P)=f(x) \leq f(y)=\sfd(y,P)$ by assumption, the desired condition \eqref{eq:xy beta isom}
  simplifies in this case to
  \begin{equation}\label{eq:Case3a}
  \sfd(x,y)+\sfd(x,P)\leq \sfd(y,P)+ 2\beta.
  \end{equation}
  If $[yxP]$, then \eqref{eq:Case3a} holds true even with
``$2\beta$'' replaced by ``$\beta$'' on the right-hand side. In
the following, we assume therefore that  $[yxP]$ does \emph{not}
hold. Since $\sfd(y,P)\geq \sfd(x,P)$, the only way $[yxP]$ can
fail is if $\sfd(x,y)>\sfd(y,P)$. In that case we have $[xPy]$,
which we now add as a standing assumption to all the following
sub-cases.

\smallskip
    \noindent \textbf{\emph{Case 3.a}}: $[xQP]$ and $[yQP]$ hold. This is similar to Case 1:
 Using  $\sfd(P,Q)\geq
2\beta$, we find that
\begin{align*}
\sfd(x,y)+4\beta&\leq \left[\sfd(x,Q)+\sfd(P,Q)\right]+\left[\sfd(y,Q)+\sfd(P,Q)\right]\\
&\overset{[xQP],[yQP]}{\leq}\sfd(x,P)+\sfd(y,P)+2\beta \overset{[xPy]}{\leq} \sfd(x,y)+3\beta,
\end{align*}
which is impossible (since $\beta>0$). Thus the Case 3.a cannot occur under the
standing assumption that $[xPy]$.

    \smallskip

    \noindent \textbf{\emph{Case 3.b}}:
   exactly one of   $[xQP]$ and $[yQP]$
    holds. Assume first that $[xQP]$ holds and $[yQP]$ does \emph{not} hold.
Since in Case 3 also $[yPQ]$ does not hold, we must necessarily
have that $[PyQ]$. Together with the assumption $\sfd(x,P)\leq
\sfd(y,P)$, this yields
\begin{displaymath}
\sfd(y,P)\overset{[PyQ]}{\leq} \sfd(P,Q)\overset{[xQP]}{\leq}
\sfd(x,P)\leq \sfd(y,P).
\end{displaymath}
Therefore $\sfd(y,P)=\sfd(P,Q)$, which by $[PyQ]$ would imply
that also $[yQP]$ holds, which is a contradiction. Thus, Case 3.b can only occur if $[yQP]$
holds and $[xQP]$ does \emph{not} hold.

    Since also $[xPQ]$ does not hold in Case 3,  we must necessarily have that $[PxQ]$.
    Then
    \[
    \begin{split}
        \sfd(y,P)&\overset{[yQP]}{\ge} \sfd(y,Q)+\sfd(Q,P)-\beta \overset{[PxQ]}{\ge} \sfd(y,Q)+\sfd(P,x)+\sfd(x,Q)-2\beta \\
        &\ge \sfd(y,x)+\sfd(P,x)-2\beta.
    \end{split}
    \]
This concludes the proof of \eqref{eq:Case3a}
in Case 3.b.
    \smallskip

    \noindent \textbf{\emph{Case 3.c}}: neither $[xQP]$ nor $[yQP]$
    holds. As the assumptions in Case 3 also rule out the validity of $[xPQ]$ and $[yPQ]$, we must necessarily have that $[PxQ]$ and $[PyQ]$ in
    Case 3.c. We also recall the standing assumption $[xPy]$ to which we reduced the discussion at the beginning of Case 3.

    We  need to consider also the points $x,y,Q$ together, and
    distinguish the cases $[xyQ]$, $[yxQ]$, and $[xQy]$.
     If $[xyQ]$, then
     \begin{align*}\sfd(x,y)+\sfd(x,P)&\overset{[xyQ]}{\leq}\sfd(x,Q)-\sfd(y,Q)+\sfd(x,P)+\beta\overset{[PxQ]}{\leq}\sfd(P,Q)
     -\sfd(y,Q)+2\beta\\&\leq \sfd(P,y)+\sfd(y,Q)-\sfd(y,Q)+2\beta.\end{align*}
     Thus \eqref{eq:Case3a} holds true in this case. Next we assume $[yxQ]$ instead of $[xyQ]$. We apply an analogous argument as before, but use additionally
     the assumption $\sfd(x,P)\leq \sfd(y,P)$. This yields
\begin{align*}\sfd(x,y)+\sfd(x,P)&\leq \sfd(x,y)+\sfd(y,P)\overset{[yxQ]}{\leq}\sfd(y,Q)-\sfd(x,Q)+\sfd(y,P)+\beta\\&\overset{[PyQ]}{\leq}\sfd(P,Q)
     -\sfd(x,Q)+2\beta\leq \sfd(P,x)+\sfd(x,Q)-\sfd(x,Q)+2\beta\\&\leq \sfd(P,y)+2\beta,\end{align*}
which confirms \eqref{eq:Case3a} also in  this case.
    It remains the case when $[xQy]$ holds.
    Summarizing, the current assumptions are $[xPy],[PyQ],[yQx],[QxP]$. We can
    apply Lemma \ref{lem:circular critierion} and obtain that $x,y,P,Q$ are $2\beta$-circular.
\end{proof}

We now prove Lemma \ref{prop:attraction} concerning the attraction
to circular points.

\begin{proof}[Proof of Lemma \ref{prop:attraction}]  It is sufficient to prove the statement for $\beta>0.$
 Let $P_1,P_2,P_3,P_4 \in \X$ be four points as in the statement, i.e., $\partial\{P_1,P_2,P_2,P_4\}\le\beta,$ $\sfd(P_i,P_j)>15\beta$
 for all $i \neq j$ and they are $4\beta$-circular:
    \begin{equation}\label{eq:non aligned coupled}
        \sfd(P_i,P_j)\sim_{4\beta}\sfd(P_k,P_l),
    \end{equation}
    for any choice of (distinct) indices $1\le i,j,k,l\le 4$.

    To conclude the statement of the lemma, it is sufficient to prove that if  $\partial\{P_1,P_2,P_2,P_4,Q\}\le \beta$, then
    \begin{equation}\label{eq:attracted}
        \sfd(Q,\{P_1,P_2,P_3,P_4\})\le 15\beta, \quad  Q \in \X.
    \end{equation}
    We argue by contradiction, that is, we assume that $\partial\{P_1,P_2,P_2,P_4,Q\}\le \beta$ and that there exists $Q \in \X$ such that $\sfd(Q,P_i)> 15\beta$ for every $i\in\{1,2,3,4\}$. We make the following claim.
    \begin{equation*}
        \parbox{12cm}{\textbf{{Claim}}:  for every  choice  of (pairwise distinct) indices $i,j,k \in \{1,2,3,4\}$, there exists a $2\beta$-isometry $f: \{Q,P_i,P_j,P_k\}\to \rr$.}
    \end{equation*}
    This claim will be applied in ``Case 2'' later in the proof.
    To prove the claim, assume towards a contradiction that its
    statement is not true for some choice of $i,j,k \in \{1,2,3,4\}$.
    Then, since $\sfd(Q,P_i)\ge 2\beta$ for every $i=1,2,3,4,$ from Lemma \ref{prop:4 points lemma}
    we must have that the points $Q,P_i,P_j,P_k$   are $2\beta$-circular. This implies that
    \[
    \sfd(Q,P_i)\sim_{2\beta}\sfd(P_j,P_k), \quad    \sfd(Q,P_j)\sim_{2\beta}\sfd(P_i,P_k), \quad    \sfd(Q,P_k)\sim_{2\beta}\sfd(P_i,P_j),
    \]
    which combined with \eqref{eq:non aligned coupled} gives
    \begin{equation}\label{eq:l is q}
        \sfd(Q,P_i)\sim_{6\beta}\sfd(P_l,P_i), \quad    \sfd(Q,P_j)\sim_{6\beta}\sfd(P_l,P_j), \quad    \sfd(Q,P_k)\sim_{6\beta}\sfd(P_l,P_k),
    \end{equation}
    where $\{l\}=\{1,2,3,4\}\setminus \{i,j,k\}$.
    Up to reordering the indices $i,j,k$ we can assume that $\sfd(P_l,P_i)\ge \max\{\sfd(P_l,P_j),\sfd(P_l,P_k)\}$.
    From this last inequality and $\sfd(P_i,P_j)\sim_{4\beta}\sfd(P_l,P_k)$ (recall \eqref{eq:non aligned coupled})
    we obtain $\sfd(P_i,P_j)\le \sfd(P_l,P_i)+4\beta$.
  In fact, this inequality can be improved
    to $\sfd(P_i,P_j)\le \sfd(P_l,P_i)$. Assume towards a
    contradiction that $\sfd(P_l,P_i)<\sfd(P_i,P_j)$. Then, under
    the current assumptions, $\partial \{P_l,P_i,P_j\}\leq \beta$ would imply that
\begin{displaymath}
\sfd(P_l,P_j)+\sfd(P_l,P_i)\leq \sfd(P_i,P_j)+\beta \leq
\sfd(P_l,P_i)+5\beta,
\end{displaymath}
which contradicts the initial assumption $\sfd(P_l,P_j)>15\beta$. Thus we know that
in fact $\max\{\sfd(P_i,P_j),\sfd(P_l,P_j)\}\leq \sfd (P_l,P_i)$,
from which $\partial \{P_l,P_i,P_j\}\leq \beta$ implies that
    \begin{equation}\label{eq:solita storia}
        \sfd(P_l,P_i)\sim_\beta \sfd(P_l,P_j)+\sfd(P_j,P_i).
    \end{equation}
    Similarly, since $  \sfd(Q,P_i)\sim_{6\beta}\sfd(P_l,P_i)$, $\sfd(Q,P_l)>15\beta$, $\sfd(P_l,P_i)>15\beta$
    and we are assuming $\partial\{P_i,P_l,Q\}\le \beta$, we can check that the only possibility is
    \begin{equation}\label{eq:solita storia 2}
        \sfd(Q,P_l)\sim_\beta \sfd(Q,P_i)+\sfd(P_i,P_l).
    \end{equation}
    Indeed, if the maximum of
    $\{\sfd(Q,P_i),\sfd(P_i,P_l),\sfd(Q,P_l)\}$ was achieved by
    $\sfd(Q,P_i)$ or $\sfd(P_l,P_i)$, then using $  \sfd(Q,P_i)\sim_{6\beta}\sfd(P_l,P_i)$ and $\partial\{P_i,P_l,Q\}\le \beta$
    would lead to a contradiction with $\sfd(Q,P_l)>15\beta$.  Hence $\sfd(Q,P_l)\geq
\max\{\sfd(Q,P_i),\sfd(P_i,P_l)\}$, which implies
\eqref{eq:solita storia 2}.
    Combining \eqref{eq:solita storia 2} and  \eqref{eq:l is q} we obtain
    \begin{align*}
    2\sfd(P_i,P_l)-7\beta \overset{ \eqref{eq:l is q}}{\le}\sfd(Q,P_i)+\sfd(P_i,P_l)-\beta&\overset{ \eqref{eq:solita storia 2}}{\le}
      \sfd(Q,P_l) \le \sfd(Q,P_j)+\sfd(P_j,P_l)\\&\overset{ \eqref{eq:l is q}}{\le} 2\sfd(P_l,P_j)+6\beta,
    \end{align*}
    which contradicts \eqref{eq:solita storia}, since $\beta>0$ and $\sfd(P_i,P_j)\ge 15\beta.$ This concludes the proof of the above
    claim on thus the existence of  $2\beta$-isometry $f: \{Q,P_i,P_j,P_k\}\to
    \rr$ for all (pairwise distinct) $i,j,k \in \{1,2,3,4\}$.

    \medskip

  We now return to the proof of \eqref{eq:attracted} by
  contradiction. Up to relabelling we can also assume that $\sfd(P_1,P_3)=\max_{1\le i,j \le 4} \sfd(P_i,P_j).$  Then, Since $\partial\{P_1,P_2,P_3,P_4\}\leq \beta$,
   we have $[P_1P_4P_3]$, $[P_1P_2P_3]$ and
    \begin{equation}\label{eq:subtle 2}
        \sfd(P_1,P_3)\sim_\beta\sfd(P_1,P_2)+ \sfd(P_2,P_3), \quad \sfd(P_1,P_3)\sim_\beta\sfd(P_1,P_4)+ \sfd(P_4,P_3).
    \end{equation}
 Moreover we must have
  \begin{equation}\label{eq:subtle}
        \sfd(P_2,P_4)\sim_\beta \sfd(P_2,P_3)+\sfd(P_3,P_4), \quad \sfd(P_2,P_4)\sim_\beta \sfd(P_2,P_1)+\sfd(P_1,P_4),
    \end{equation}
as can be  easily deduced from \eqref{eq:subtle
    2}, recalling also that  $\sfd(P_2,P_4)\sim_{4\beta} \sfd(P_1,P_3)$ (by $4\beta$-circularity), that $ \sfd(P_1,P_3)=\max_{1\le i,j \le 4} \sfd(P_i,P_j)$ and that $\sfd(P_i,P_j)>15\beta$. For example if we had instead $\sfd(P_2,P_3)\sim_\beta \sfd(P_2,P_4)+\sfd(P_3,P_4)$, it would imply that $\sfd(P_2,P_3)>  \sfd(P_1,P_3)$, which is false.

    Consider now the points $P_1,Q,P_3$. There are three possible
    cases:  $[P_1P_3Q]$, $[P_1QP_3]$, and  $[P_3P_1Q]$.
   However, since the assumptions on $P_1,P_3$ are symmetric, up to swapping $P_1$ and $P_3$ we can distinguish only two cases: $[P_1P_3Q]$ or $[P_1QP_3].$
\medskip

    \noindent\textbf{ Case 1:}  $[P_1P_3Q]$ holds. Then
    \[
    \begin{split}
        \sfd(P_1,P_4)+ &\sfd(P_4,P_3)+\sfd(P_3,Q)-2\beta  \overset{\eqref{eq:subtle 2}}{\le} \sfd(P_1,P_3)+\sfd(P_3,Q)-\beta \\
        & \overset{[P_1P_3Q]}{\le} \sfd(P_1,Q)\le  \sfd(P_1,P_4)+ \sfd(P_4,Q),
    \end{split}
    \]
    which shows that
    \begin{equation}\label{eq:suspicious 1}
        \sfd(P_4,Q)\sim_{2\beta}\sfd(P_4,P_3)+\sfd(P_3,Q).
    \end{equation}
    Analogously exchanging $P_4$ with $P_2$ (again using \eqref{eq:subtle 2}) we can show that
    \begin{equation}\label{eq:suspicious 2}
        \sfd(P_2,Q)\sim_{2\beta}\sfd(P_2,P_3)+\sfd(P_3,Q).
    \end{equation}
    However the above two relations will lead to a contradiction,
    recalling $\partial\{P_2,Q,P_4\}\le \beta$. Indeed, up to exchanging $P_2$ and $P_4$
we can assume that either $[P_2QP_4]$ or $[P_2P_4Q]$ holds. If
$[P_2QP_4]$ holds, then
\begin{align*}
    \sfd(P_2,P_4) &\overset{[P_2QP_4]}{\ge} \sfd(P_2,Q)+\sfd(Q,P_4) -\beta \overset{\eqref{eq:suspicious 1},\eqref{eq:suspicious 2}}{\ge} \sfd(P_2,P_3)+\sfd(P_4,P_3)+2\sfd(P_3,Q)-5\beta \\
    &\ge \sfd(P_2,P_4)+2\sfd(P_3,Q)-5\beta.
\end{align*}
    The above however contradicts  $\sfd(P_3,Q)>15\beta.$
    Suppose instead $[P_2P_4Q]$, then
    \[
    \sfd(P_2,P_4)\overset{[P_2P_4Q]}{\le}  \sfd(P_2,Q)-\sfd(P_4,Q)+\beta \overset{\eqref{eq:suspicious 1}}{\le}  \sfd(P_2,P_3)-\sfd(P_4,P_3)+3\beta<\sfd(P_2,P_4),
    \]
    where in the second step we used also the triangle inequality and in the last inequality we used that $\sfd(P_4,P_3)\ge 15\beta$ and that $\sfd(P_2,P_4)\ge  \sfd(P_2,P_3)$ (which comes from
 the first part of \eqref{eq:subtle} and $\sfd(P_3,P_4)>15\beta$). This is a contradiction which shows that Case 1 can not happen.

\medskip

    \noindent\textbf{ Case 2:} $[P_1QP_3]$ holds. Recall that by the claim below \eqref{eq:attracted},
    we have the existence of maps $f: \{Q,P_1,P_2,P_3\}\to \rr$, $g: \{Q,P_1,P_3,P_4\}\to \rr$ that are $2\beta$-isometries.
    From the fact that $\sfd(P_1,P_3)\ge \sfd(P_i,P_j)$ for all $i,j$ and that $\sfd(P_i,P_j)\ge 15\beta$  for all distinct $i,j$  (part of the initial assumptions), the point $f(P_2)$ must lie  in the interval with endpoints $f(P_1),f(P_3)$.   Similarly from the assumption $[P_1QP_3]$  (and again from $\sfd(Q,P_i)\ge 15\beta$ for every $i=1,2,3,4$)  it follows that also  $f(Q)$ must lie  in the same interval. Hence, up to replacing $f$ with $-f$ and  swapping the labels of $P_1$ and $P_3$  (observe that the current assumptions are symmetric in $P_1$ and $P_3$), we can assume that
    \begin{equation}\label{eq:f order}
          f(P_1)\le f(Q)\le f(P_2)\le f(P_3).
    \end{equation}
    Analogously the point $g(P_4)$ must lie  in the interval with endpoints $g(P_1),g(P_3)$ and up to replacing $g$ with $-g$ we can also assume that
    \[
    g(P_1)\le g(P_4)\le g(P_3).
    \]
    It remains two possibilities for the position of  $g(Q)$:
    \begin{equation}\label{eq:last cases}
            g(P_1)\le g(Q)\le g(P_4) \text{ or } g(P_4)\le g(Q)\le g(P_3).
    \end{equation}
    Suppose that $g(P_1)\le g(Q)\le {g}(P_4)$ holds. As $f,g$ are $2\beta$-isometries
    (and since $\sfd(P_i,P_j)\ge 15\beta$, $\sfd(Q,P_i)\ge 15\beta$ for every $i,j=1,2,3,4$, $i\neq j$) from this and \eqref{eq:f order}
     we deduce that $\sfd(P_1,P_2)\sim_{\beta}\sfd(P_1,Q)+\sfd(Q,P_2)$ and $\sfd(P_1,P_4)\sim_{\beta}\sfd(P_1,Q)+\sfd(Q,P_4)$. Therefore
\begin{align*}
    \sfd(P_2,P_4) &\le \sfd(P_2,Q)+\sfd(Q,P_4) \le \sfd(P_1,P_2)+\sfd(P_1,P_4)-2\sfd(P_1,Q)+2\beta \\
    &\overset{\eqref{eq:subtle}}{\le} \sfd(P_2,P_4)-2\sfd(P_1,Q)+3\beta,
\end{align*}
    which contradicts $\sfd(P_1,Q)\ge 15\beta>0$. If instead $g(P_4)\le g(Q)\le g(P_3)$
is satisfied, we have
$\sfd(P_1,P_2)\sim_{\beta}\sfd(P_1,Q)+\sfd(Q,P_2)$ and
$\sfd(P_4,P_3)\sim_\beta\sfd(P_4,Q)+\sfd(Q,P_3) $, therefore
    \[
    \sfd(P_1,P_3)+\sfd(P_2,P_4)\le \sfd(P_1,Q)+ \sfd(Q,P_3)+ \sfd(P_2,Q)+ \sfd(Q,P_4)\le  \sfd(P_1,P_2)+ \sfd(P_4,P_3)+2\beta,
    \]
    from which using the first of both \eqref{eq:subtle 2} and \eqref{eq:subtle} on the left-hand side we deduce
    \[
    \sfd(P_1,P_2)+2\sfd(P_2,P_3)+\sfd(P_3,P_4)-2\beta\le  \sfd(P_1,P_2)+ \sfd(P_4,P_3)+2\beta.
    \]
    This however gives $\sfd(P_2,P_3)\le 2\beta$, which is a contradiction because by assumption $\sfd(P_2,P_3)\ge
    15\beta>0$.
\end{proof}

\def\cprime{$'$}

\end{document}